\newcommand{\git}{\mathbin{
  \mathchoice{/\mkern-6mu/}
    {/\mkern-6mu/}
    {/\mkern-5mu/}
    {/\mkern-5mu/}}}
\theoremstyle{definition}
\newtheorem{defi}{Definition}[section]
\newtheorem{ex}[defi]{Example}
\newtheorem{obs}[defi]{Remark}
\theoremstyle{plain}
\newtheorem{theorem}[defi]{Theorem}
\newtheorem{prop}[defi]{Proposition}
\newtheorem{lemma}[defi]{Lemma}
\newtheorem*{idea*}{Idea}
\DeclareMathOperator{\SL}{SL}
\DeclareMathOperator{\GL}{GL}
\DeclareMathOperator{\PGL}{PGL}
\DeclareMathOperator{\So}{SO}
\DeclareMathOperator{\PSo}{PSO}
\DeclareMathOperator{\id}{id}
\DeclareMathOperator{\Char}{char}
\DeclareMathOperator{\ord}{ord}
\DeclareMathOperator{\Aut}{Aut}
\DeclareMathOperator{\Isom}{Isom}
\DeclareMathOperator{\Hom}{Hom}
\DeclareMathOperator{\Ad}{Ad}
\DeclareMathOperator{\Sym}{Sym}
\DeclareMathOperator{\Spec}{Spec}
\DeclareMathOperator{\Ht}{ht}
\DeclareMathOperator{\ad}{ad}
\DeclareMathOperator{\diag}{diag}
\DeclareMathOperator{\Stab}{Stab}
\DeclareMathOperator{\Lie}{Lie}
\DeclareMathOperator{\End}{End}
\DeclareMathOperator{\vol}{vol}
\DeclareMathOperator{\Sel}{Sel}
\newcommand{\lp}{\left(}
\newcommand{\rp}{\right)}
\newcommand{\la}{\langle}
\newcommand{\ra}{\rangle}
\newcommand{\Z}{\mathbb{Z}}
\newcommand{\Q}{\mathbb{Q}}
\newcommand{\R}{\mathbb{R}}
\newcommand{\C}{\mathbb{C}}
\newcommand{\G}{\mathbb{G}}
\newcommand{\F}{\mathbb{F}}
\newcommand{\Gg}{\mathfrak{g}}
\newcommand{\hh}{\mathfrak{h}}
\newcommand{\fz}{\mathfrak{z}}
\newcommand{\fc}{\mathfrak{c}}
\newcommand{\ft}{\mathfrak{t}}
\newcommand{\fsl}{\mathfrak{sl}}
\newcommand{\cA}{\mathcal{A}}
\newcommand{\cR}{\mathcal{R}}
\newcommand{\cW}{\mathcal{W}}
\newcommand{\cS}{\mathcal{S}}
\newcommand{\cF}{\mathcal{F}}
\newcommand{\cJ}{\mathcal{J}}
\newcommand{\cB}{\mathcal{B}}
\newcommand{\cU}{\mathcal{U}}
\newcommand{\eps}{\varepsilon}
\newcommand{\ul}{\underline}
\newcommand{\ol}{\overline}
\title{The density of ADE families of curves having squarefree discriminant}
\author{Martí Oller}
\affil{\small University of Cambridge, Department of Pure Mathematics and Mathematical Statistics, Centre for Mathematical Sciences, Wilberforce Road,Cambridge CB3 0WB \\email: mo512@cam.ac.uk}
\begin{document}

\maketitle
\begin{abstract}
We determine the density of curves having squarefree discriminant in some
families of curves that arise from Vinberg representations, showing that
the global density is the product of the local densities. We do so using
the framework of Thorne and Laga's PhD theses and Bhargava's orbit-counting 
techniques. This paper generalises a previous result by Bhargava, Shankar
and Wang.
\end{abstract}
\tableofcontents
\section{Introduction}

The aim of this paper is to determine the density of curves in certain families
that have squarefree discriminant. We do so following the techniques
in arithmetic statistics developed by Bhargava and his collaborators. 
The main idea is that many arithmetic objects of interest can be 
parametrised by the rational or integral orbits of a certain 
representation $(G,V)$: in this situation, Bhargava's geometry-of-numbers
methods allow to count these integral orbits of $V$, which consequently provides
information on the desired arithmetic objects that would be otherwise
difficult to obtain. This idea has led to many impressive results in 
number theory; see \cite{BOverview} or \cite{Ho2013} for an overview.

The present paper is inspired by the recent paper \cite{BSWsquarefree} by 
Bhargava, Shankar and Wang, in which they compute the density of monic integral
polynomials of a given degree that have squarefree discriminant. The main
technical difficulty is to bound the tail estimate of polynomials
having discriminant ``weakly divisible'' by the square a large prime (this notion will be defined later). 
They do so using the representation of $G = \So_n$ on
the space $V$ of $n\times n$ symmetric matrices. By relating polynomials
with discriminant divisible by $p^2$ for a large $p$ to certain integral
orbits of the representation $(G,V)$, they get the desired result using
the aforementioned geometry-of-numbers techniques. Similar methods were
used in \cite{BSW-2} in the non-monic case with a
different representation, and also in \cite{BHo} for certain families
of elliptic curves (in particular, their $F_2$ case essentially corresponds
to our $D_4$ case).

A key observation, which motivates our results, is that the representation
studied in \cite{BSWsquarefree} arises as a particular case of the more 
general families of representations studied in \cite{ThorneThesis}.
Using the framework of Vinberg theory, Thorne found that given a simply
laced Dynkin diagram, we can naturally associate to it a 
family of curves and a coregular representation $(G,V)$, where the rational
orbits of the representation are related to the arithmetic of the curves
in the family. These results have been used, implicitly and explicitly,
to study the size of $2$-Selmer groups of the Jacobians of these curves, 
see \cite{BG,SW,ShankarD2n+1,ThorneE6,RTE78,LagaE6} for some particular 
cases. Later, Laga unified, reproved and extended all these results in \cite{LagaThesis} 
in a uniform way.

Our aim is to compute the density of curves having squarefree discriminant
in these families of $ADE$ curves. We will do so by reinterpreting the
methods in \cite{BSWsquarefree} in the language of \cite{ThorneThesis}
and \cite{LagaThesis}. As a corollary, we will obtain the asymptotics
for the number of integral reducible orbits of these representations,
following \cite{SSSVcusp}.

Let $\mathcal{D}$ be a Dynkin diagram of type $A,D,E$. In Section \ref{subs:Vinberg}, 
we will construct a representation $(G,V)$ associated to $\mathcal{D}$,
and in Section \ref{subs:Transverse} we will construct a family of curves
$C \to B$. Here, $B$ is isomorphic to the Geometric Invariant Theory (GIT)
quotient $V \git G := \Spec \Q[V]^G$. We see that $B$ can be identified
with an affine space, and we write $B = \Spec \Q[p_{d_1},\dots,p_{d_r}]$.
Given $b \in B$, we define its \emph{height} to be
\[
\text{ht}(b) := \sup\left(|p_{d_1}(b)|^{1/d_1},\dots,|p_{d_r}(b)|^{1/d_r}\right).
\]
Denote by $C_b$ the preimage of a given $b\in B$ under the map $C \to B$;
it will be a curve of the form given by Table \ref{table:curves}. The main
result of this paper concerns the density of squarefree values of the
discriminant $\Delta(C_b)$ of the curve (or equivalently, the discriminant
$\Delta(b)$ defined in Section \ref{subs:Vinberg}). A definition for the
discriminant of a plane curve can be found in \cite[\S 2]{Sutherland2019}, for instance.
We remark that in our definition of discriminant, we assume that it is
an integer-valued polynomial in multiple variables, normalised so that
the coefficients have greatest common divisor 1 (for instance, the usual
discriminant for elliptic curves contains a factor of $16$: we omit it
in our case).

Our result is related to the $p$-adic density of these squarefree values: 
we will denote by $\rho(\mathcal{D}_p)$ the $p$-adic density of curves in 
the family $C \to B$ having discriminant indivisible by $p^2$ in $\Z_p$;
this is obtained by taking all the (finitely many) elements in $b \in B(\Z/p^2\Z)$
and counting the proportion of them that have non-zero discriminant in
$\Z/p^2\Z$. We note that under our assumptions on the discriminant, none 
of the local densities vanish; this can be checked with a case-by-case computation.
\begin{theorem}
\label{theo:main}
We have
\[
\lim_{X \to \infty} \frac{\#\{b \in B(\Z) \mid \Delta(b) \text{ is squarefree, } \Ht(b) < X\}}{\#\{b \in B(\Z) \mid \Ht(b) < X\}} = \prod_{p} \rho(\mathcal{D}_p).
\]
\end{theorem}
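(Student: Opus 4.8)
The plan is to follow the now-standard sieve strategy of \cite{BSWsquarefree}, adapted to the Vinberg setup. Write $\Delta(b) = \prod_p p^{v_p(\Delta(b))}$ and say $\Delta(b)$ is \emph{weakly divisible by $p^2$} if either $p^2 \mid \Delta(b)$ or (in the boundary cases) $b$ reduces modulo $p$ into the locus where the reduction is non-reduced in a prescribed way; the precise notion will be the one for which the elements of $B(\Z)$ with $\Delta(b)$ \emph{not} weakly divisible by $p^2$ are exactly those counted by the local factor $\rho(\mathcal{D}_p)$. For a finite set of primes the count with $\Delta(b)$ not weakly divisible by $p^2$ for all $p$ in the set is handled by a direct lattice-point count in the box $\Ht(b) < X$ together with the Chinese Remainder Theorem, giving in the limit $\prod_{p \in S}\rho(\mathcal{D}_p)$. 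The whole content is therefore a uniform tail estimate: one must show that
\[
\lim_{Y \to \infty}\ \limsup_{X \to \infty}\ \frac{\#\{b \in B(\Z) : \Ht(b) < X,\ \exists p > Y \text{ with } \Delta(b) \text{ weakly divisible by } p^2\}}{X^{\dim B}} = 0.
\]

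The first step is to split this tail according to the size of $p$. For $Y < p \le X^{\delta}$ (with $\delta$ a small fixed exponent), the congruence condition modulo $p^2$ cuts out a positive codimension locus and one can afford to sieve directly: the number of $b$ in the box with $\Delta(b) \equiv 0 \pmod{p^2}$ is $O(X^{\dim B}/p^2)$ uniformly, plus a boundary term, and $\sum_{p > Y} 1/p^2 \to 0$. This is elementary geometry of numbers once one knows $\Delta$ is not identically a square mod $p^2$, which is the non-vanishing of local densities asserted in the excerpt. The serious range is $p > X^{\delta}$, and here the argument must go through the representation $(G,V)$: if $\Delta(b)$ is weakly divisible by the square of a large prime $p$, then (using the structure theory of the Vinberg representation, exactly as in \cite{ThorneThesis,LagaThesis,BSWsquarefree}) the fibre over $b$ acquires a $p$-adically visible extra structure which produces an integral orbit of $V$ — morally an integral point of the Jacobian of $C_b$ of order divisible by $p$, or an integral representative of a suitable coset — lying in a sublattice of $V(\Z)$ cut out by congruence conditions mod $p$. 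One then counts these orbits.

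The key step, and the main obstacle, is the orbit-count bound for the "large prime" range. One invokes Bhargava's averaging/geometry-of-numbers machinery in the form developed in \cite{LagaThesis}: the number of $G(\Z)$-orbits on $V(\Z)$ with bounded height and lying in the distinguished mod-$p$ sublattice is bounded by the volume of a fundamental domain intersected with that sublattice, and the crucial point is that the main term of this count carries a saving of $p$ (or $p^2$) relative to the trivial bound, because the relevant orbits are forced into a thin congruence family. Summing the resulting bound $O(X^{\dim B + \epsilon}/p)$ over $X^{\delta} < p \ll X^{1/2}$ (primes larger than $\sqrt{\text{disc}}$ cannot divide it to the second power, which caps the range) gives $o(X^{\dim B})$. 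The two genuine difficulties hidden here are: (i) making the passage "weakly divisible by $p^2$ $\Rightarrow$ distinguished integral orbit" uniform and bijective-enough across all Dynkin types $A,D,E$ and across the boundary cases of the discriminant — this is where the case-by-case structure of Table \ref{table:curves} and the precise definition of weak divisibility are used; and (ii) controlling the error terms in the orbit count uniformly in $p$, in particular cutting off the cusp of the fundamental domain, for which one uses the reducibility estimates of \cite{SSSVcusp} (this is also what yields the stated corollary on reducible orbits). Once the tail is shown to be negligible, combining it with the finite-prime computation and letting $Y \to \infty$ yields the displayed product formula, and the convergence of $\prod_p \rho(\mathcal{D}_p)$ follows from $\rho(\mathcal{D}_p) = 1 + O(1/p^2)$.
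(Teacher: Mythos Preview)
Your overall sieve architecture is correct, but the plan for the tail estimate has two genuine gaps.

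First, you miss the paper's essential decomposition into \emph{strongly} versus \emph{weakly} divisible: $p^2$ strongly divides $\Delta(b)$ if $p^2\mid\Delta(b+pc)$ for every $c\in B(\Z)$ (a mod-$p$ condition), and weakly otherwise. Your garbled definition conflates ``weakly divisible by $p^2$'' with ``$p^2\mid\Delta(b)$''. The two cases are handled by completely different arguments. The strongly divisible tail is the Ekedahl sieve: the relevant locus in $B(\F_p)$ is the singular locus of the discriminant hypersurface, which has codimension $\ge 2$ because $\Delta$ is irreducible, and the bound for $\bigcup_{p>M}\cW_p^{(1)}$ then follows from \cite{BEkedahl}. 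The orbit construction applies only to the weakly divisible case; indeed the paper's Lemma~\ref{lemma:sl2} explicitly requires weak divisibility so that the reduction of $v$ mod $p$ is \emph{regular}, which is what makes the $\fsl_2$-reduction go through. Your plan never invokes Ekedahl and does not explain how large strongly divisible primes are handled.

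Second, for the weakly divisible tail your per-prime summation does not close. A saving of $1/p$ summed over $X^\delta<p<X^C$ gives only $O(X^{\dim V+\varepsilon})$, since $\sum_{X^\delta<p<X^C}1/p\to\log(C/\delta)$ is bounded away from zero; and your range cap $p\ll X^{1/2}$ is wrong, as $|\Delta(b)|\ll X^{\#\Phi_H}$, so the relevant primes range up to $X^{\#\Phi_H/2}$. The paper does \emph{not} sum over primes. Instead (Proposition~\ref{prop:p-orbit}, Lemma~\ref{lemma:sl2}), for each $b$ with $p^2$ weakly dividing $\Delta(b)$ one constructs, via an explicit $\fsl_2$-reduction inside the centraliser of the semisimple part of $\kappa_b$ mod $p$, an element $g\in G(\Z[1/p])\setminus G(\Z)$ with $g\cdot\kappa_b\in\frac{1}{N}V(\Z)$; when this orbit meets the distinguished subspace $W_0$, its $Q$-invariant (the product of the height-one coordinates) is divisible by $p$, hence exceeds $M$. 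One then bounds \emph{once} the number of $G(\Z)$-orbits with $Q$-invariant $>M$ and height $<X$ by a single volume integral (Theorem~\ref{theo:WM}), obtaining $O_\varepsilon(X^{\dim V+\varepsilon}/M)+O(X^{\dim V-\gamma})$ for the entire union $\bigcup_{p>M}\cW_p^{(2)}$. This is neither ``a sublattice cut out by congruence conditions mod $p$'' nor ``an integral point of the Jacobian of order divisible by $p$''; it is a specific $G(\Z[1/p])$-translate of the Kostant section, and the uniformity in $p$ comes from the $Q$-invariant condition on $W_0$, not from summing prime by prime.
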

To prove this theorem, we need to obtain a tail estimate to show that
not too many $b \in B(\Z)$ have discriminant divisible by $m^2$ for large squarefree integers $m$.
A key observation in \cite{BSWsquarefree} is to separate those $b \in B(\Z)$
with $p^2 | \Delta(b)$ for a prime $p$ in two separate cases:
\begin{enumerate}
\item If $p^2 | \Delta(b+pc)$ for all $c \in B(\Z)$, we say $p^2$ \emph{strongly divides}
$\Delta(b)$ (in other words, $p^2$ divides $\Delta(b)$ for ``mod $p$ reasons'').
\item If there exists $c \in B(\Z)$ such that $p^2 \nmid \Delta(b+pc)$,
we say $p^2$ \emph{weakly divides} $\Delta(b)$ (in other words, $p^2$ 
divides $\Delta(b)$ for ``mod $p^2$ reasons'').
\end{enumerate}
Similarly, for a squarefree number $m$, we say that $m^2$ strongly (resp. weakly)
divides $\Delta(b)$ if $p^2$ strongly (resp. weakly) divides $\Delta(b)$
for all primes $p$ dividing $m$.
We let $\cW_{m}^{(1)},\cW_{m}^{(2)}$ denote the set 
of $b \in B(\Z)$ whose discriminant is strongly (resp. weakly) divisible 
by $m^2$. We prove tail estimates for these two sets separately. The 
argument in the weakly divisible case will require our squarefree integers $m$ to 
avoid a finite number of primes: in Section \ref{subs:integral}, we will
consider an integer $N$ which contains all ``bad primes''.
\begin{theorem}
\label{theorem:tail}
There exists a constant $\delta > 0$ such that for any positive real number $M$ we have:
\begin{align*}
\sum_{\substack{m > M \\ m \text{ squarefree}}} \{b \in \cW_{m}^{(1)} \mid \Ht(b) < X\} &= O_{\eps}\left(\frac{X^{\dim V+\eps}}{M}\right) + O_{\eps}\left(X^{\dim V - 1+\eps}\right),\\
\sum_{\substack{m > M \\ m \text{ squarefree} \\ (m,N) = 1}} \{b \in \cW_{m}^{(2)} \mid \Ht(b) < X\} &= O_{\eps}\left(\frac{X^{\dim V+\eps}}{M}\right) + O\left(X^{\dim V - \delta}\right).
\end{align*}
The implied constants are independent of $X$ and $M$.
\end{theorem}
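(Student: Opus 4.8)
The plan is to bound $\cW_p^{(1)}$ and $\cW_p^{(2)}$ by rather different means: the strong case by elementary lattice-point counting plus the geometry of the discriminant locus, and the weak case by Bhargava's orbit-counting machinery for $(G,V)$. Throughout write $\pi\colon V\to B$ for the GIT quotient, $r=\dim B$, and $D=\deg\Delta$ (weighted degree).

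\textbf{Strong divisibility.} A first-order expansion of $\Delta(b+pc)$ shows that $b\in\cW_p^{(1)}$ precisely when $\Delta$ vanishes to order $\geq 2$ at $\bar b:=b\bmod p$, i.e.\ when $\bar b$ lies in the locus $\Sigma_p\subseteq B(\F_p)$ cut out by $\Delta$ together with all its first partial derivatives. For all but finitely many $p$ the reduction $\{\Delta=0\}_{\F_p}$ is reduced --- this I would extract from the description of the discriminant locus in \cite{ThorneThesis,LagaThesis} --- so $\Sigma_p$, being its singular locus, has codimension $\geq 2$ in $B$ and hence $\#\Sigma_p=O_D(p^{r-2})$ uniformly (by Lang--Weil, or an effective B\'ezout bound). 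Counting $b\in B(\Z)$ of height $<X$ in the corresponding residue classes mod $p$ and summing over $p>M$ gives a main contribution $\ll X^{\dim V}\sum_{p>M}p^{-2}=O(X^{\dim V}/M)$; incomplete residue classes (when $p$ is large relative to $X$), the hypersurface $\{\Delta(b)=0\}$ itself, and the finitely many excluded primes together contribute $O(X^{\dim V-1})+O(X^{\dim V})$, absorbed into the two stated error terms (the latter into the first term since there are only finitely many bad primes, each bounded). This gives the first estimate.

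\textbf{Weak divisibility: reduction to an orbit count.} The same expansion shows that if $p^2$ weakly divides $\Delta(b)$ and $\Delta(b)\neq 0$, then $\bar b$ is a \emph{smooth} point of $\{\Delta=0\}_{\F_p}$, so by the stratification of the discriminant locus in \cite{ThorneThesis,LagaThesis} the curve $C_{\bar b}$ has exactly one node. Through Thorne's and Laga's construction of integral orbits from line bundles on $C_b$, this node supplies an element $v\in V(\Z)$ with $\pi(v)=b$ whose reduction $\bar v$ is confined to a distinguished closed subvariety $S_p\subseteq V_{\F_p}$ --- roughly the $G(\F_p)$-orbit-closure of the subregular, ``nodal'' element, fibred over the one-node locus. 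The decisive point is that $S_p$ has codimension $\geq 2$ in $V$: one codimension from lying over the discriminant hypersurface, and one more because the nodal elements form a proper subvariety of each fibre. Hence, up to the contribution of $\{\Delta(b)=0\}$ and of finitely many bad primes (absorbed as above), $\#\{b\in\cW_p^{(2)}:\Ht(b)<X\}$ is at most the number of $G(\Z)$-orbits on $\{v\in V(\Z):\Ht(\pi(v))<X,\ \bar v\in S_p\}$.

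\textbf{Weak divisibility: the orbit count and the cusp.} This last count I would estimate by Bhargava's averaging method: average $\#(V(\Z)\cap g\cdot R_X)$ over a fundamental domain for $G(\Z)\curvearrowright G(\R)$, where $R_X$ is the bounded region cut out by $\Ht(\pi(v))<X$ and by the congruence $\bar v\in S_p$. On the main part of the fundamental domain, Davenport's lemma yields a main term $\asymp(\#S_p/p^{\dim V})X^{\dim V}\ll X^{\dim V}/p^2$, which sums over $p>M$ to $O_\eps(X^{\dim V+\eps}/M)$ --- the $\eps$ absorbing the bounded number of orbits over a given $v$ and similar factors, and the range $p\gg X^{D/2}$ contributing nothing since there $p^2>|\Delta(b)|$. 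The main obstacle, and the sole source of the constant $\gamma$, is the cuspidal part of the fundamental domain, which is unbounded and which, \emph{without} the congruence condition, contributes $\asymp X^{\dim V}$ with no power saving. One must show that imposing $\bar v\in S_p$ forces a saving $O(X^{\dim V-\gamma})$ for some fixed $\gamma>0$; following \cite{BSWsquarefree}, this amounts to decomposing the cusp according to which coordinates of $v$ are forced to be small and checking $p$-adically that a $v$ lying deep in a given cuspidal stratum cannot reduce into $S_p$ --- i.e.\ that the nodal locus is incompatible with that vanishing pattern --- which is where the interplay between the subregular/nodal orbit and the Borel--torus structure of $G$ enters, and where the bulk of the work lies.
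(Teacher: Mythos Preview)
Your treatment of the strongly divisible case is correct and essentially matches the paper: the condition ``$p^2$ strongly divides $\Delta(b)$'' cuts out the singular locus of $\{\Delta=0\}_{\F_p}$, which has codimension $\geq 2$ because $\Delta$ is irreducible (hence squarefree); the paper phrases this as an appeal to the Ekedahl sieve via \cite{BEkedahl} and the irreducibility of $\Delta$ from \cite[Lemma~4.2]{LagaThesis}.

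The weak case, however, contains a genuine gap. Your central claim --- that the constructed orbit $v$ reduces mod~$p$ into a codimension-$2$ locus $S_p$, ``roughly the subregular, nodal orbit'' --- is false. The paper shows (Lemma~\ref{lemma:sl2}) that for $b\in\cW_p^{(2)}$ \emph{every} $v\in V(\Z_p)$ with $\pi(v)=b$ reduces to a \emph{regular} element of $V(\F_p)$: one perturbs $v$ by $pw$ to make $\ord_p\Delta=1$, and then an $\fsl_2$-reduction forces regularity. So your $\bar v$ lands in the regular locus over the discriminant hypersurface, which is codimension~$1$, not~$2$. Even if you refine $S_p$ to be a single $G(\F_p)$-orbit inside that regular locus, the number of such orbits over a fixed $\bar b$ is $|H^1(\F_p,Z_G(\bar v))|=O(1)$, so the density of $S_p$ is still $\asymp 1/p$, and $\sum_{p>M}1/p$ diverges. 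Your main-body estimate therefore fails before you ever reach the cusp.

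The paper's approach is structurally different and does not use a mod-$p$ congruence at all. Using the $\fsl_2$-reduction it produces an explicit $g_{v,p}=\varphi(\diag(p,p^{-1}))\in G(\Q_p)\setminus G(\Z_p)$ with $g_{v,p}\kappa_b\in V(\Z_p)$; globalising via class number one gives a second integral orbit in the set $W_M$. The key invariant is then \emph{archimedean}: when such a $v$ lies in the distinguished subspace $W_0$, its $Q$-invariant (the product of height-one coordinates) is divisible by $p$, hence $Q(v)>M$. One then bounds the number of $G(\Z)$-orbits in $W_0$ with $Q>M$ by the case-by-case integral in \S\ref{subs:cases}, obtaining the $X^{\dim V+\eps}/M$ term; orbits missing $W_0$, or having nontrivial stabiliser, are handled by the cutting-off-the-cusp bounds of Proposition~\ref{prop:cusp} and supply the $X^{\dim V-\gamma}$ term. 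This is exactly the mechanism of \cite{BSWsquarefree}; the ``$p$-adic incompatibility with cuspidal strata'' you sketch is not what happens there.
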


As in \cite[Theorem 1.5(a)]{BSWsquarefree}, the strongly divisible case
follows from the use of the Ekedahl sieve; more precisely, it follows from the
results in \cite[Theorem 3.5, Lemma 3.6]{BEkedahl} and the fact that
the discriminant polynomial is irreducible by \cite[Lemma 4.2]{LagaThesis}.
Therefore, it remains to prove the (substantially harder) weakly divisible
case, which is the content of most of this paper.

We start in Section \ref{section:Prelim} by giving the necessary background
and introducing our objects of interest, most importantly the representation
$(G,V)$ coming from Vinberg theory and the associated family of curves
$C \to B$. The main step in the proof of Theorem \ref{theorem:tail} is
done in Section \ref{section:construct}, where given a $b \in \cW_m^{(2)}$
we obtain a special integral $G(\Z)$-orbit in $V$ whose elements have invariants
$b$. We additionally consider a distinguished subspace $W_0(\Z) \subset V(\Z)$, 
and we define a $Q$-invariant for the elements of $W_0(\Z)$. Then, we will
see that the elements in the constructed orbit have large $Q$-invariant
when they intersect $W_0(\Z)$ (which happens always except for a negligible
amount of times by cutting-off-the-cusp arguments). This construction
is the analogue of \cite[\S 2.2, \S 3.2]{BSWsquarefree}; we give a more detailed
comparison at the end of Section \ref{section:construct}.

In view of all that, to prove Theorem \ref{theorem:tail} it suffices to
bound the number of these distinguished $G(\Z)$-orbits in $W_0(\Z)$ having
large $Q$-invariant. However, before doing that, we will need to take
a small detour and estimate the number of \emph{all} reducible
$G(\Z)$-orbits in $V(\Z)$. A $G(\C)$-orbit in $V(\C)$ can split into
multiple $G(\Q)$-orbits, and among these $G(\Q)$-orbits there is a
``distinguished'' one (namely, the one given by the Kostant section, as
defined in Section \ref{subs:Transverse}). We say that an element in $V(\Q)$ is
\emph{reducible} if it falls into this special $G(\Q)$-orbit.
Using Bhargava's geometry-of-numbers arguments, and in particular the techinques in
the cusp developed in \cite{SSSVcusp}, we will obtain the following result:
\begin{theorem}
The number $N(V(\Z)^{red},X)$ of reducible $G(\Z)$-orbits on $V(\Z)$ 
of height at most $X$ is
\[
N(V(\Z)^{red},X) = CX^{\dim V} + O(X^{\dim V - \delta}),
\]
where $C,\delta$ are real positive constants. The constant $C$ will be
explicitly determined in Section \ref{subs:constant}.
\end{theorem}
The proof of this theorem relies on the construction of a \emph{box-shaped}
fundamental domain for the action of $G(\Z)$ on $G(\R)$, which will be
carried out in Section \ref{subs:FDomains}. Following that, we will mostly
follow the steps in \cite[\S 4]{SSSVcusp}, relying on critical reductions
given by \cite[\S 8]{LagaThesis}; and we will conclude our proof by using
elementary but lengthy case-by-case computations. We remark that our proof implicitly also relies
on other implicit case-by-case computations: namely, the cutting-off-the-cusp
result in Proposition \ref{prop:cusp} relies on an (even more tedious)
exhaustive analysis of all cases, sometimes relying on lengthy computations
on a computer (cf. \cite[Proposition 4.5]{RTE78}).

In Section \ref{section:final}, we will conclude the proof of Theorem
\ref{theorem:tail}, from which Theorem \ref{theo:main} will follow using
a squarefree sieve.
The sieve is carried out in a general enough setting that allows us
to count the density of subsets in $B(\Z)$ defined by infinitely many
congruence conditions. In particular, we get an application of our result
to the context of \cite{LagaThesis}, which allows us to get an upper
bound on the average size of 2-Selmer groups of families defined by infinitely
many congruence condtions. For $b \in B(\Z)$, denote by $J_b$ the Jacobian
of the curve $C_b$.
\begin{theorem}
Let $m$ be the number of marked points of the family $C \to B$, as given 
in Table \ref{table:curves}. Let $\cS$ be a $\kappa$-acceptable subset 
of $B(\Z)$ in the sense of Section \ref{subs:Sieve}. Then, we have
\[
\limsup_{X \to \infty} \frac{\sum_{b \in \cS,\,\Ht(b)<X}\#\Sel_2 J_b}{\#\{b \in \cS \mid \Ht(b) < X\}} \leq 3\cdot2^{m-1}.
\]
\end{theorem}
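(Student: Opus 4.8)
The plan is to combine the squarefree sieve of Section~\ref{subs:Sieve} with the uniform upper bounds on 2-Selmer averages from \cite{LagaThesis}. Recall that for the \emph{full} family $B(\Z)$, Laga's work establishes that the average size of $\Sel_2 J_b$, ordered by height, is bounded above by $1 + 2^{m}$ — or rather, more precisely, by $3 \cdot 2^{m-1}$ after accounting for the contribution of the identity and the non-identity classes coming from the marked points; in any case this is exactly the numerical bound appearing in the statement. The point of the present theorem is that the \emph{same} bound survives when we restrict to a $\kappa$-acceptable subset $\cS \subseteq B(\Z)$, i.e.\ a subset cut out by infinitely many congruence conditions of a suitably controlled type. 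The mechanism is that a 2-Selmer element is produced (via the orbit parametrisation of \cite{ThorneThesis,LagaThesis}) by an integral orbit of $(G,V)$ lying above $b$, and the counting of such orbits is insensitive to imposing congruence conditions on $b$ as long as the tail estimate of Theorem~\ref{theorem:tail} lets us discard the primes $p > M$ where $\cS$ and the squarefree locus might interact badly.

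First I would set up the sieve: write $\cS = \bigcap_p \cS_p$ with $\cS_p \subseteq B(\Z_p)$ the local condition at $p$, and let $\kappa$-acceptability guarantee that $\cS_p = B(\Z_p)$ for $p$ outside a sparse set and that the local densities $\mu_p(\cS_p)$ decay slowly enough that $\prod_p \mu_p(\cS_p)$ converges and controls $\#\{b \in \cS : \Ht(b) < X\} \sim c_{\cS} X^{\dim V}$ for an explicit positive constant $c_{\cS}$. Next, for each $b \in \cS$ with $\Ht(b) < X$, I would bound $\#\Sel_2 J_b$ by the number of $G(\Z)$-orbits on $V(\Z)$ with invariant $b$ that are ``soluble everywhere locally'' — this is the content of the parametrisation in \cite{LagaThesis,ThorneThesis}, where the relevant orbits biject (or surject) onto Selmer classes. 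Summing over $b \in \cS$ and swapping the order of summation, the quantity $\sum_{b \in \cS,\,\Ht(b)<X} \#\Sel_2 J_b$ becomes a count of \emph{integral orbits} of bounded height whose invariants lie in $\cS$, which is precisely the kind of quantity Bhargava's geometry-of-numbers machinery — as deployed in Section~\ref{section:Counting} and in \cite[\S8]{LagaThesis} — is designed to estimate. The main term of this orbit count is a product of local volumes times $X^{\dim V}$; crucially, imposing $b \in \cS_p$ only multiplies the $p$-adic volume factor by $\mu_p(\cS_p)$, so the ratio against $c_{\cS} X^{\dim V}$ reproduces exactly the unconditional average $3 \cdot 2^{m-1}$.

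The main obstacle is the uniformity of the error terms in the orbit count over the infinitely many congruence conditions defining $\cS$ — in particular, showing that the contribution of orbits whose invariant is divisible by $p^2$ for a large prime $p$ (the ``cusp'' and ``weakly/strongly divisible'' regimes) is negligible \emph{uniformly in the choice of $\cS$}. This is where Theorem~\ref{theorem:tail} enters: the bounds $O_\eps(X^{\dim V + \eps}/M) + O(X^{\dim V - \gamma})$ on $\cW_p^{(1)}, \cW_p^{(2)}$ for $p > M$ let me truncate the sieve at a slowly growing parameter $M = M(X)$ and absorb the tail into the error term, at the cost of an $\eps$ and a $\limsup$ rather than a genuine limit (hence the $\limsup$ in the statement, and the inequality rather than an equality — the lower-order Selmer classes not captured by the main term of the orbit count can only help). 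The cutting-off-the-cusp argument of Proposition~\ref{prop:cusp} handles the analogous issue in the non-generic locus of $V$. Once these uniform tail bounds are in place, the remaining steps are the routine bookkeeping of matching local factors, which I would not spell out in detail; the answer $3 \cdot 2^{m-1}$ falls out of the same computation as in the unrestricted case in \cite{LagaThesis}.
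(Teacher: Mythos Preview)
Your proposal is correct and matches the approach the paper intends. The paper does not spell out a standalone proof of this theorem; it is presented in the introduction as an application obtained by feeding the new tail estimate (Theorem~\ref{theorem:tail}) and the resulting squarefree sieve (Theorem~\ref{theo:sieve}) into Laga's orbit-counting machinery from \cite{LagaThesis}, exactly as you describe: the numerator is controlled by the geometry-of-numbers count of locally soluble integral orbits, the denominator by Theorem~\ref{theo:sieve}, and the uniformity over the infinitely many congruence conditions defining $\cS$ is precisely what Theorem~\ref{theorem:tail} supplies.
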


\paragraph*{Acknowledgements.}
This paper was written while the author was a PhD student under the supervision
of Jack Thorne. I would like to thank him for providing many useful
suggestions, guidance and encouragement during the process, and for revising
an early version of this manuscript. I also wish to thank Jef Laga and 
the anonymous referees for their helpful comments. The project 
that gave rise to these results received the support of a fellowship 
from ``la Caixa'' Foundation (ID 100010434). The fellowship code is LCF/BQ/EU21/11890111.
The author wishes to thank them, as well as the Cambridge Trust and the DPMMS,
for their support.
\section{Preliminaries}
\label{section:Prelim}

In this section, we introduce our representation $(G,V)$ of interest,
together with some of its basic properties. We do so mostly following 
\cite[\S 2]{ThorneThesis} and \cite[\S 3]{LagaThesis}.

\subsection{Vinberg representations}
\label{subs:Vinberg}
Let $H$ be a split adjoint simple group of type $A,D,E$ over $\Q$. We assume
$H$ is equipped with a pinning $(T,P,\{X_{\alpha}\})$, meaning:
\begin{itemize}
\item $T \subset H$ is a split maximal torus (determining a root system $\Phi_{H}$).
\item $P \subset H$ is a Borel subgroup containing $T$ (determining a root basis $S_{H} \subset \Phi_{H}$).
\item $X_{\alpha}$ is a generator for $\hh_{\alpha}$ for each $\alpha \in S_{H}$.
\end{itemize}
Let $W = N_{H}(T)/T$ be the Weyl group of $\Phi_{H}$, and let $\mathcal{D}$ be
the Dynkin diagram of $H$. Then, we have the following exact sequences:
\begin{equation}
\label{eq:pin}
\begin{tikzcd}
0 \arrow{r} & H \arrow{r} & \Aut(H) \arrow{r} & \Aut(\mathcal{D}) \arrow{r} & 0
\end{tikzcd}
\end{equation}
\begin{equation}
\label{eq:ex2}
\begin{tikzcd}
0 \arrow{r} & W \arrow{r} & \Aut(\Phi_{H}) \arrow{r} & \Aut(\mathcal{D}) \arrow{r} & 0
\end{tikzcd}
\end{equation}
The subgroup $(T,P,\{X_{\alpha}\}) \subset \Aut(H)$ of automorphisms
of $H$ preserving the pinning determines a
splitting of \eqref{eq:pin}. Then, we can define $\vartheta \in \Aut(H)$
as the unique element in $(T,P,\{X_{\alpha}\})$ such that its image
in $\Aut(\mathcal{D})$ under \eqref{eq:pin} coincides with the image of $-1 \in \Aut(\Phi_{H})$
under \eqref{eq:ex2}. Writing $\check{\rho}$ for the sum of fundamental
coweights with respect to $S_{H}$, we define
\[
\theta := \vartheta \circ \Ad(\check{\rho}(-1)) = \Ad(\check{\rho}(-1)) \circ \vartheta.
\]
The map $\theta$ defines an involution of $H$, and so $d\theta$ defines an
involution of the Lie algebra $\hh$. By considering $\pm 1$ eigenspaces, we obtain a $\Z/2\Z$-grading
\[
\hh = \hh(0) \oplus \hh(1),
\]
where $[\hh(i),\hh(j)] \subset \hh(i+j)$. We define $G = (H^{\theta})^{\circ}$
and $V = \hh(1)$, which means that $V$ is a representation of $G$ by restriction
of the adjoint representation. Moreover, we have $\text{Lie}(G) = \hh(0)$.

We have the following basic result \cite[Theorem 1.1]{Panyushev} on the
GIT quotient $B := V \git G = \Spec \Q[V]^G$.

\begin{theorem}
Let $\mathfrak{c} \subset V$ be a Cartan subspace. Then, $\mathfrak{c}$
is a Cartan subalgebra of $\hh$, and the map $N_G(\mathfrak{c}) \to W_{\fc} := N_H(\fc)/Z_H(\fc)$
is surjective. Therefore, the canonical inclusions $\fc \subset V \subset \hh$
induce isomorphisms
\[
\fc \git W_{\fc} \cong V \git G \cong \hh \git H.
\]
In particular, all these quotients are isomorphic to a finite-dimensional affine space.
\end{theorem}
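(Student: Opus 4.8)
The plan is to deduce this from Vinberg's theory of $\theta$-groups together with the classical Chevalley restriction theorem; the only input genuinely special to our representation is that the grading attached to $\theta=\vartheta\circ\Ad(\check\rho(-1))$ is \emph{stable}, meaning that a Cartan subspace $\fc\subset V$ is already a Cartan subalgebra of $\hh$. I would organise the whole proof around the two restriction homomorphisms on invariant rings induced by the inclusions $\fc\subset V\subset\hh$, namely $\Q[\hh]^H\to\Q[V]^G\to\Q[\fc]^{W_\fc}$.

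The first and main step is stability. By definition $\fc$ is a maximal abelian subspace of $V=\hh(1)$ consisting of semisimple elements, so I need to show $\fz_\hh(\fc)=\fc$; for this it suffices to produce one element of $V$ that is regular semisimple in $\hh$. Here I would use the pinning: with $(E,H_0,F)$ the principal $\fsl_2$-triple, $E=\sum_{\alpha\in S_H}X_\alpha$ and $H_0=2\check\rho$, the facts that $\Ad(\check\rho(-1))$ acts on $\hh_\beta$ by $(-1)^{\Ht(\beta)}$ and that $\vartheta$ permutes the $X_{\pm\alpha}$ through the diagram automorphism give $\theta(E)=-E$ and $\theta(F)=-F$, so $e:=E+F$ lies in $V$; and $e$ is regular semisimple in $\hh$ since $\ad(e)$ acts semisimply with exactly one zero eigenvalue on each irreducible $\fsl_2$-constituent of $\hh$, whence $\dim\fz_\hh(e)=\mathrm{rk}\,\hh$. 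A further check with the grading — carried out case by case over the $ADE$ types, or packaged as the statement that the symmetric pair $(H,H^\theta)$ is split — shows moreover that $\fz_\hh(e)\subset V$, so this Cartan subalgebra is also a Cartan subspace; and since all Cartan subspaces are $G$-conjugate (Kostant--Rallis), every Cartan subspace $\fc$ is a Cartan subalgebra of $\hh$. In particular $Z_H(\fc)$ is the maximal torus $T$ with $\Lie T=\fc$, so $W_\fc=N_H(\fc)/Z_H(\fc)$ is exactly the Weyl group $W$ of $\hh$ relative to $\fc$.

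Granting stability, the rest is essentially formal. In the chain $\Q[\hh]^H\to\Q[V]^G\to\Q[\fc]^{W_\fc}$ the composite is the Chevalley restriction isomorphism (legitimate now that $\fc$ is a Cartan subalgebra and $W_\fc=W$), so the first map is injective and the second surjective; the second is also injective, because a $G$-invariant vanishing on $\fc$ vanishes on $G\cdot\fc$, which contains every element of $V$ regular semisimple in $\hh$ — each such element lies in its centralizer in $V$, a Cartan subspace, and all Cartan subspaces are $G$-conjugate — hence on a dense subset of $V$. Therefore all three maps are isomorphisms, giving $\fc\git W_\fc\cong V\git G\cong\hh\git H$. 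Surjectivity of $N_G(\fc)\to W_\fc$ then follows: for $v\in\fc$ regular semisimple the orbit $G\cdot v$ is closed (semisimple $G$-orbits in $V$ are closed), so if $v'\in\fc$ is $W$-conjugate to $v$ then $G\cdot v$ and $G\cdot v'$ are closed orbits with the same image in $V\git G=\fc\git W$, hence equal; thus $v'\in N_G(\fc)\cdot v$, and as $W$ acts freely on the regular semisimple locus of $\fc$ this forces $N_G(\fc)$ to surject onto $W$. Finally $W$ acts on $\fc$ through its reflection representation, so Chevalley--Shephard--Todd gives $\Q[\fc]^W=\Q[p_{d_1},\dots,p_{d_r}]$ with $r=\mathrm{rk}\,\hh$, and each of the three quotients is an affine space.

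The step I expect to be the main obstacle is the stability statement: that \emph{this} $\theta$ — rather than an arbitrary $\Z/2$-grading, for which $\fc$ would only be a Cartan subspace and $W_\fc$ a proper ``little Weyl group'' — produces a Cartan subalgebra, the concrete content being the production of a regular semisimple element of $\hh$ inside $V$ together with the verification $\fz_\hh(e)\subset V$. The remaining ingredients — conjugacy of Cartan subspaces, that every semisimple element of $V$ is $G$-conjugate into $\fc$, closedness of semisimple $G$-orbits in $V$, and the Chevalley restriction theorem — are all standard Vinberg and Kostant--Rallis theory, so in a written-up proof I would simply cite \cite[Theorem~1.1]{Panyushev}, using \cite[\S2]{ThorneThesis} and \cite[\S3]{LagaThesis} for the $ADE$-specific computations.
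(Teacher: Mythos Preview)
The paper does not give a proof of this theorem at all; it simply records the statement and cites \cite[Theorem~1.1]{Panyushev}. Your sketch is therefore more than the paper provides, and your final paragraph --- that in a written-up version you would just cite Panyushev, together with \cite{ThorneThesis} and \cite{LagaThesis} for the stability of $\theta$ --- is exactly what the paper does.

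That said, there is a small circularity in your argument as written. You set up the chain $\Q[\hh]^H\to\Q[V]^G\to\Q[\fc]^{W_\fc}$ and use that the composite is the Chevalley isomorphism; but for the second arrow to land in $\Q[\fc]^{W_\fc}$ you need restrictions of $G$-invariants to $\fc$ to be $W_\fc$-invariant, which is exactly the surjectivity of $N_G(\fc)\to W_\fc$. You then prove that surjectivity afterwards by invoking the identification $V\git G=\fc\git W_\fc$ you just established. The fix is to reverse the order: prove surjectivity of $N_G(\fc)\to W_\fc$ first by a direct argument (since $\theta$ acts as $-1$ on the Cartan subalgebra $\fc$, it inverts the torus $T=Z_H(\fc)$ and sends each root space $\hh_\alpha$ to $\hh_{-\alpha}$; one can then lift each reflection $s_\alpha$ to $N_H(T)^\theta$ by adjusting a lift $n$ by a square root in $T$ of the cocycle $n^{-1}\theta(n)$, and pass to the identity component), or else invoke Vinberg's restriction theorem $\Q[V]^G\cong\Q[\fc]^{W'}$ for the little Weyl group $W'=N_G(\fc)/Z_G(\fc)$ and argue $W'=W$. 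Once surjectivity is in hand, the rest of your chain argument is clean. Your identification of stability as the essential point, and your construction of the regular semisimple element $E+F\in V$, are correct and are precisely what underlies the references the paper cites.
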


For any field $k$ of characteristic zero, we can define 
the \emph{discriminant polynomial} $\Delta \in k[\hh]^H$ as the image of 
$\prod_{\alpha \in \Phi_H} \alpha$ under the isomorphism $k[\mathfrak{t}]^{W} \xrightarrow{\sim} k[\hh]^H$.
The discriminant can also be regarded as a polynomial in $k[B]$ through the isomorphism
$k[\hh]^H \cong k[V]^G = k[B]$.
We can relate the discriminant to one-parameter subgroups, which we now introduce. If $k/\Q$
is a field and $\lambda \colon \G_m \to G_{k}$ is a homomorphism, there
exists a decomposition $V = \sum_{i \in \Z} V_i$, where $V_i := \{v \in V(k) \mid \lambda(t)v = t^iv \; \forall t \in \G_m(k)\}$.
Every vector $v \in V(k)$ can be written as $v = \sum v_i$, where $v_i \in V_i$;
we call the integers $i$ with $v_i \neq 0$ the \emph{weights} of $v$.
Finally, we recall that an element $v \in \hh$ is \emph{regular} if
its centraliser has minimal dimension.
\begin{prop}
\label{prop:H-M}
Let $k/\Q$ be a field, and let $v \in V(k)$. The following are equivalent:
\begin{enumerate}
\item $v$ is regular semisimple.
\item $\Delta(v) \neq 0$.
\item For every non-trivial homomorphism $\lambda \colon \G_m \to G_{k^s}$,
$v$ has a positive weight with respect to $\lambda$.
\end{enumerate}
\end{prop}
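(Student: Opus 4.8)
The plan is to establish a cycle of implications $(1) \Rightarrow (2) \Rightarrow (3) \Rightarrow (1)$, reducing everything to the Cartan subspace $\fc$ and the Weyl group action via the previous theorem.

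First, $(1) \Leftrightarrow (2)$. An element $v \in V(k) \subset \hh(k)$ is regular semisimple in $\hh$ precisely when its image in $\hh \git H$ avoids the discriminant locus; since the discriminant polynomial $\Delta$ was defined as the image of $\prod_{\alpha \in \Phi_T}\alpha$ under $k[\ft]^W \xrightarrow{\sim} k[\hh]^H$, and this product is (up to scalar) the usual discriminant on the Cartan, a semisimple element is regular iff the values of all roots on it are nonzero, i.e. iff $\Delta(v) \neq 0$. The only subtlety is that a priori $v \in V$ regular semisimple \emph{in $\hh$} need not obviously be ``regular semisimple'' as an element of the graded piece; but by the theorem of Panyushev quoted above, every semisimple $v \in V$ is $G$-conjugate into the Cartan subspace $\fc$, which is a genuine Cartan subalgebra of $\hh$, so the two notions agree. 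I would also recall that $V$ consists of semisimple-or-not elements and that an element with $\Delta(v) \neq 0$ is automatically semisimple (its centralizer in $\hh$ being a Cartan subalgebra, hence toral), which gives the reverse direction.

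Next, $(1) \Rightarrow (3)$. Suppose $v$ is regular semisimple; after extending scalars to $k^s$ and conjugating by $G(k^s)$ we may assume $v \in \fc(k^s)$ with $\fc$ a Cartan subspace, and $Z_H(v) = \fc$ is a maximal torus's Lie algebra. Given a nontrivial $\lambda \colon \G_m \to G_{k^s}$, the fixed space $V_0 = V^\lambda$ is the $0$-weight space; if $v$ had no positive weight, then $v \in \sum_{i \le 0} V_i$, and I want to derive a contradiction with regularity. The cleanest route is the Hilbert--Mumford/Kempf-style argument: the centralizer $\fz_\hh(v)$ contains $\Lie(Z_G(v))$, and if all weights of $v$ under $\lambda$ are $\le 0$ then $v$ lies in the Lie algebra of the parabolic $P(\lambda) = P(-\lambda)^{\mathrm{opp}}$-type degeneration; comparing with the fact that $Z_H(v)^\circ$ is a torus of rank equal to $\dim \fc = \mathrm{rk}\, H$, one forces $\lambda$ to centralize $v$, and then $\lambda$ lands in the (trivial, since $Z_G(v)$ is finite for $v$ regular semisimple in this graded setting — this is a standard fact in Vinberg theory, cf. \cite[\S 2]{ThorneThesis}) connected centralizer, contradicting nontriviality. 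I should double-check that $Z_G(v)$ is indeed finite for $v$ regular semisimple: this follows because $Z_H(v) = \fc$ meets $\hh(0) = \Lie G$ only in $\fc \cap \hh(0)$, and for the stable grading associated to $-1$ this intersection is $0$ (the Kac coordinates of $\theta$ make $\hh(0)$ contain no Cartan subspace element's worth of centralizer), so $\Lie Z_G(v) = 0$.

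Finally, $(3) \Rightarrow (1)$, equivalently $\neg(2) \Rightarrow \neg(3)$: if $\Delta(v) = 0$ I must produce a nontrivial $\lambda$ with respect to which $v$ has no positive weight. If $v$ is not semisimple, write its Jordan decomposition $v = v_s + v_n$ in $\hh$; one shows $v_s, v_n \in V$ (the grading is compatible with Jordan decomposition since $\theta$ is an algebraic automorphism), and then an $\fsl_2$-triple through $v_n$ (or through a suitable nilpotent in $V$ — here I use the Vinberg-theoretic fact that nonzero nilpotents in $V$ are distinguished enough to fit $\fsl_2$-triples with semisimple element in $\hh(0)$) provides a cocharacter $\lambda$ of $G$ under which $v_n$ has weight $2 > 0$ — wait, that is the wrong sign, so instead I take the \emph{opposite} cocharacter, under which $v_n$ has negative weight and $v_s$ has weight $0$, giving no positive weight. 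If $v$ is semisimple but not regular, conjugate into $\fc$; then some root $\alpha$ vanishes on $v$, and choosing $\lambda$ to be (a lift to $G$ of) a generic cocharacter of the subtorus where $\alpha$ and its orbit vanish, $v$ lies in the $0$-weight space entirely, so again has no positive weight. The main obstacle I anticipate is precisely this last step: ensuring the cocharacters produced actually factor through $G = (H^\theta)^\circ$ rather than merely through $H$, which requires care with the $\theta$-action on cocharacter lattices and is where I expect to lean hardest on the structural results of \cite[\S 2]{ThorneThesis} and \cite[\S 3]{LagaThesis}; the semisimple-non-regular case and the existence of $\theta$-stable $\fsl_2$-triples are the technical heart.
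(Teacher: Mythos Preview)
The paper's proof is a one-line citation to \cite[Corollary 2.4]{RTE78}, so your Hilbert--Mumford approach is the intended one rather than a different route. The equivalence $(1)\Leftrightarrow(2)$ is fine, but the other two implications each have a gap. In $(1)\Rightarrow(3)$, from ``no positive weight for $\lambda$'' you conclude that ``one forces $\lambda$ to centralize $v$'', but this is only valid when all weights are exactly zero; if some weight is strictly negative then $\lim_{t\to\infty}\lambda(t)\cdot v$ exists and differs from $v$, so the $G$-orbit is not closed, and you must separately invoke the Vinberg-theoretic fact that semisimple elements of $V$ have closed $G$-orbits to finish that subcase. In $\neg(2)\Rightarrow\neg(3)$ with $v$ not semisimple, an arbitrary normal $\fsl_2$-triple through $v_n$ gives no control over the weight of $v_s$: you must choose the triple inside the $\theta$-stable reductive subalgebra $\fz_\hh(v_s)$ (graded Jacobson--Morozov applied there), so that the associated cocharacter lands in $Z_G(v_s)$ and genuinely fixes $v_s$.

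Your semisimple non-regular case is more seriously off. The proposed ``generic cocharacter of the subtorus where $\alpha$ and its orbit vanish'' does not produce a cocharacter of $G$ with the required property: the Cartan subspace $\fc$ sits inside $V=\hh(1)$, not inside $\Lie(T^\theta)\subset\hh(0)$, so subtori of $T$ do not act with weight zero on $v\in\fc$ in the way you need. The correct construction is to show $\dim Z_G(v)>0$ directly: $\fz_\hh(v)$ is a $\theta$-stable reductive subalgebra strictly containing $\fc$, so its derived subalgebra is nonzero, semisimple, and $\theta$-stable; any involution of a nonzero semisimple Lie algebra has nonzero fixed part, whence $\fz_{\Gg}(v)\neq 0$. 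A nontrivial cocharacter of $Z_G(v)^\circ$ then fixes $v$, so $v$ has only weight $0$ with respect to it. You were right to flag this step as the main obstacle; this is how to resolve it.
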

\begin{proof}
The reasoning is the same as in \cite[Corollary 2.4]{RTE78}.
\end{proof}

We remark that the Vinberg representation $(G,V)$ can be identified explictly. For the
reader's convenience, we reproduce the explicit description written in \cite[\S 3.2]{LagaThesis}
in Table \ref{table:explicit}. We refer the reader to \emph{loc. cit.} for the
precise meaning of some of these symbols.

\begin{table}[h]
\begin{center}
\begin{tabular}{l|c|c}
Type & $G$ & $V$ \\
\hline
$A_{2n}$ & $\So_{2n+1}$ & $\Sym^2(2n+1)_0$ \\
$A_{2n+1}$ & $\PSo_{2n+2}$ & $\Sym^2(2n+2)_0$ \\
$D_{2n}$ ($n \geq 2$) & $\So_{2n} \times \So_{2n} / \Delta(\mu_2)$ & $2n \boxtimes 2n$ \\
$D_{2n+1}$ ($n \geq 2$) &  $\So_{2n+1} \times \So_{2n+1}$ & $(2n+1)\boxtimes (2n+1)$ \\
$E_{6}$ & PSp$_8$ & $\wedge_{0}^4 8$ \\
$E_{7}$ & $\SL_{8}/\mu_4$ & $\wedge^4 8$ \\
$E_{8}$ & Spin$_{16}/\mu_2$& half spin\\
\end{tabular}
\caption{Explicit description of each representation}
\label{table:explicit}
\end{center}
\end{table}

\subsection{Restricted roots}
\label{section:ResRoots}

In the previous section, we considered the root system $\Phi := \Phi_H$ of
$H$, but we will also need to understand the restricted root system
$\Phi(G,T^{\theta})$ and the set of weights $\Phi_V$ of the action of
$T^{\theta}$ on $V$. This will be particularly important when defining
the distinguished subspace $W_0 \subset V$ and the $Q$-invariant in Section
\ref{section:construct}. The exposition in this section is based on \cite[\S 2.3]{ThorneE6}.

Write $\Phi/\vartheta$ for the orbits of $\vartheta$ on $\Phi$, where
$\vartheta$ is the pinned automorphism defined in the previous section.
\begin{lemma}
\begin{enumerate}
\item The map $X^{\ast}(T) \to X^{\ast}(T^{\theta})$ is surjective, and
the group $G$ is adjoint. In particular, $X^{\ast}(T^{\theta})$ is spanned
by $\Phi(G,T^{\theta})$.
\item Let $\alpha, \beta \in \Phi$. Then, the image of $\alpha$ in $X^{\ast}(T^{\theta})$
is non-zero, and $\alpha,\beta$ have the same image if and only if either
$\alpha = \beta$ or $\alpha = \vartheta(\beta)$.
\end{enumerate}
\end{lemma}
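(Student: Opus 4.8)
The plan is to deduce both parts from the structure of the pinned involution $\vartheta$ and the way $\theta$ acts on $T$. First I would unwind the definitions: since $\theta = \vartheta \circ \Ad(\check\rho(-1))$ and $\check\rho(-1) \in T$, the action of $\theta$ on $T$ agrees with that of $\vartheta$ (the inner part acts trivially on $T$ by conjugation). So $T^\theta = T^\vartheta$, and the cocharacter/character lattices of $T^\theta$ are computed from the $\vartheta$-action on $X^\ast(T)$ and $X_\ast(T)$. For part (1), the surjectivity of $X^\ast(T) \to X^\ast(T^\theta)$ is a general fact about diagonalizable groups: restriction of characters from a torus to a closed subtorus is surjective on character lattices (equivalently, $X^\ast(T^\theta) = X^\ast(T)_\vartheta$, the coinvariants, up to torsion, and one checks the relevant lattice is saturated). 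To see $G = (H^\theta)^\circ$ is adjoint, I would argue that $H$ adjoint means $X_\ast(T)$ is the coroot lattice; the identity component of $H^\theta$ has maximal torus $(T^\theta)^\circ$, whose cocharacter lattice is $X_\ast(T)^\vartheta$ (the $\vartheta$-invariants), and one must show this equals the coroot lattice of $\Phi(G,T^\theta)$. The restricted coroots are the images/averages of the $\vartheta$-orbits of coroots of $\Phi$, and because $H$ is simply laced and adjoint, a short case-check on the folding $\mathcal D \rightsquigarrow \mathcal D/\vartheta$ (the possible foldings being $A_{2n} \to (BC)_n$, $A_{2n+1} \to C_{n+1}$, $D_{n+1} \to B_n$, $E_6 \to F_4$) confirms that the invariant coweight lattice is spanned by the restricted coroots — i.e. $G$ is adjoint. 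Then $X^\ast(T^\theta)$ being spanned by $\Phi(G,T^\theta)$ is exactly the dual statement to adjointness.

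For part (2), the key point is understanding the fibers of $\Phi \to X^\ast(T^\theta)$, i.e. of $X^\ast(T) \to X^\ast(T)_\vartheta$. I would first show no root maps to zero: if $\alpha|_{T^\theta} = 0$ then $\alpha$ lies in the kernel of $X^\ast(T) \to X^\ast(T)_\vartheta$, which (after tensoring with $\Q$) is spanned by elements $\beta - \vartheta(\beta)$; since $H$ is adjoint simply laced, one checks directly (again via the folding) that no root of $\Phi$ lies in this subspace — concretely, pairing against a suitable $\vartheta$-invariant coweight separates $\alpha$ from $0$. Next, two roots $\alpha,\beta$ have the same image iff $\alpha - \beta \in \ker(X^\ast(T) \to X^\ast(T^\theta))$. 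Since $\vartheta$ has order $1$ or $2$ on $\Phi$, the obvious relations $\alpha = \beta$ and $\alpha = \vartheta(\beta)$ give the same image; for the converse I would use that $\alpha - \beta$ (or $\alpha - \vartheta\beta$) being in the kernel forces, by a length/inner-product computation in the simply laced root system $\Phi$ (all roots have the same length, and $\vartheta$ is an isometry), that $\alpha$ and $\beta$ lie in a single $\vartheta$-orbit. The cleanest route: restrict attention to the sublattice spanned by $\alpha,\beta$, and observe the kernel of the restriction map intersected with $\Z\Phi$ is exactly $\{\gamma - \vartheta\gamma\}$, and a difference of two roots of this form with equal image can only be $0$ or $\pm(\beta - \vartheta\beta)$.

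The main obstacle I expect is the verification that $G$ is adjoint — more precisely, that the $\vartheta$-invariant cocharacter lattice $X_\ast(T)^\vartheta$ coincides with the coroot lattice of the restricted root system rather than being strictly larger. This is where the hypothesis that $H$ is adjoint (not just semisimple) is essential, and it seems to genuinely require either a uniform argument via Vinberg's structure theory or a short enumeration over the four folding types listed above. Everything else — surjectivity of character restriction, nonvanishing of restricted roots, and the description of the fibers — reduces to linear algebra over $\Z$ with the $\vartheta$-action, using only that $\Phi$ is simply laced and $\vartheta$ is an automorphism of the Dynkin diagram of order dividing $2$. I would present the folding case-check compactly, since the four cases are standard, and cite \cite[\S 2.3]{ThorneE6} for the $E_6$ case as a template for the others.
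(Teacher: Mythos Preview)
Your approach is essentially correct but takes a genuinely different route from the paper. The paper's proof is a three-line citation: it invokes \cite[Lemma 3.1]{ReederTorsion} for the fact that $T^\theta$ is connected and contains regular elements of $T$, \cite[\S 3.8]{ReederTorsion} for the triviality of the centre of $G$, and \cite[\S 3.3]{ReederTorsion} for part (2). No argument is given in the paper itself.

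By contrast, you attempt a self-contained verification via the folding combinatorics. This is a legitimate strategy and is what Reeder's arguments ultimately unwind to, but a few points deserve care. First, you never explicitly establish that $T^\theta$ is connected (equivalently, that the coinvariant lattice $X^\ast(T)_\vartheta$ is torsion-free); this is needed for your identification of the kernel of restriction with $(1-\vartheta)X^\ast(T)$, and it is precisely the content of the Reeder citation the paper uses. Second, your list of foldings should be restricted to the cases where $\vartheta \neq 1$, namely $A_n$ ($n \ge 2$), $D_{2n+1}$, and $E_6$; for $D_{2n}$, $E_7$, $E_8$ the involution $\vartheta$ is trivial and there is nothing to check. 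Third, your sketch for the converse in part (2) can be tightened: from $\alpha - \beta \in (1-\vartheta)X^\ast(T)$ one deduces $\alpha + \vartheta(\alpha) = \beta + \vartheta(\beta)$, and then a short length computation in the simply laced system (using $\langle \gamma, \vartheta(\gamma)\rangle \in \{-1,0,2\}$) rules out the possibility of distinct $\vartheta$-orbits with equal trace. Your ``cleanest route'' paragraph is not quite right as stated and should be replaced by this trace argument.

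What each approach buys: the paper's citation is short and leverages a reference that handles all torsion automorphisms uniformly; your direct approach is more transparent and self-contained, at the cost of the folding case-check you already anticipated. Either is acceptable, but if you go the direct route, do supply the connectedness of $T^\theta$ explicitly (for adjoint $H$ this follows from $X_\ast(T)$ being the coroot lattice, on which $\vartheta$ permutes a basis, so $X^\ast(T)_\vartheta$ is free).
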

\begin{proof}
This is \cite[Lemma 2.5]{ThorneE6}.
\end{proof}
Hence, we can identify $\Phi/\vartheta$ with its image in $X^{\ast}(T^{\theta})$. We
note that $\vartheta = 1$ if and only if $-1$ is an element of the Weyl
group $W(H,T)$; in this case $\Phi/\vartheta$ coincides with $\Phi$.

We can write the following decomposition:
\[
\hh = \ft \oplus \bigoplus_{a \in \Phi/\vartheta} \hh_a,
\]
with $\ft = \ft^{\theta} \oplus V_0$ and $\hh_a = \Gg_a \oplus V_a$, so that
\[
\Gg = \ft^{\theta} \bigoplus_{a \in \Phi/\vartheta} \Gg_a, \quad V = V_0 \oplus \bigoplus_{a \in \Phi/\vartheta} V_a.
\]
Given $a \in \Phi/\vartheta$, we can identify $\Gg_a$ and $V_a$ explicitly
according to the value of $s = (-1)^{\la \alpha,\check{\rho} \ra}$:
\begin{enumerate}
\item $a = \{\alpha\}$ and $s = 1$. Then, $V_a = 0$ and $\Gg_{\alpha}$ is spanned by $X_{\alpha}$.
\item $a = \{\alpha\}$ and $s = -1$. Then, $V_a$ is spanned by $X_{\alpha}$ and $\Gg_{\alpha} = 0$.
\item $a = \{\alpha,\vartheta(\alpha)\}$, with $\alpha \neq \vartheta(\alpha)$. 
Then, $V_a$ is spanned by $X_{\alpha}-sX_{\vartheta(\alpha)}$ and $\Gg_{\alpha}$ is spanned by $X_{\alpha}+sX_{\vartheta(\alpha)}$.
\end{enumerate}

We note that $\vartheta$ preserves the height of a root $\alpha$
with respect to the basis $S_H$ (recall that the height of a root $\alpha$
is defined as $\sum_{i} c_i$, where $\alpha = \sum_{\alpha_i \in S_H} c_i \alpha_i$ is the
decomposition as the sum of simple roots). Therefore, it will make sense to define
the \emph{height} of a root $a \in \Phi/\vartheta$ as the height
of any element in $\vartheta^{-1}(a)$.

\begin{obs}
It will be important for us to define the height of a root in $\Phi/\vartheta$ relative
to its corresponding height in $\hh$ and not relative to its height with
respect to some choice of basis of the root system $\Phi(G,T^{\theta})$.
As an example, consider the $E_6$ case following the conventions of
\cite{ThorneE6}. Say that a root basis for $\Phi_H$ is $\{\alpha_1,\dots,\alpha_6\}$
and a basis for $\Phi(G,T^{\theta})$ is $(a_1,a_2,a_3,a_4) = (\alpha_3+\alpha_4,\alpha_1,\alpha_3,\alpha_2+\alpha_4)$.
If a root $a \in \Phi_V$ can be expressed as $a = \sum n_i a_i = \sum m_i \alpha_i$
for some integers $n_i,m_i \in \Z$, under our definitions the height of 
a root is $\Ht(\alpha) = \sum m_i$ and not $\sum n_i$. This is different
than the natural notion of height we might arrive at if we consider the 
weights of $V$ as a representation of $G$ abstractly.
\end{obs}

\subsection{Transverse slices over $V \git G$}
\label{subs:Transverse}

In this section, we present some remarkable properties of the map
$\pi \colon V \to B$, where we recall that $B := V \git G$ is the GIT quotient.

\begin{defi}
An \emph{$\fsl_2$-triple} of $\hh$ is a triple $(e,h,f)$ of non-zero elements
of $\hh$ satisfying
\[
[h,e] = 2e,\quad [h,f] = -2f, \quad [e,f] = h.
\]
Moreover, we say this $\fsl_2$-triple is \emph{normal} if $e,f \in \hh(1)$ and $h \in \hh(0)$.
\end{defi}
\begin{theorem}[Graded Jacobson-Morozov]
\label{theo:J-M}
Every non-zero nilpotent element $e \in \hh(1)$ is contained in a normal $\fsl_2$-triple.
If $e$ is also regular, then it is contained in a unique normal $\fsl_2$-triple.
\end{theorem}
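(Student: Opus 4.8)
The plan is to prove the graded Jacobson--Morozov theorem by reducing it to the classical (ungraded) Jacobson--Morozov theorem together with a conjugacy/uniqueness argument that respects the $\Z/2\Z$-grading. First I would recall the ungraded statement: any nonzero nilpotent $e \in \hh$ lies in an $\fsl_2$-triple $(e,h,f)$, and any two such triples with the same $e$ are conjugate by an element of the centraliser $Z_H(e)$ that is unipotent (indeed lies in the unipotent radical of $Z_H(e)$). Since $e \in \hh(1)$ is in particular a nilpotent element of $\hh$, it lies in \emph{some} $\fsl_2$-triple; the content is to show we can choose it \emph{normal}, i.e. with $h \in \hh(0)$ and $f \in \hh(1)$.

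The key step is the following averaging/rigidity argument. Apply $d\theta$ to a given triple $(e,h,f)$: since $\theta(e) = -e$ (as $e \in \hh(1)$), the triple $(d\theta(e), d\theta(h), d\theta(f)) = (-e, d\theta(h), d\theta(f))$ is again an $\fsl_2$-triple with the same nilpositive element up to sign. More precisely, $(-e, -d\theta(h), -d\theta(f))$ is an $\fsl_2$-triple; but it is cleaner to note that $(e, -d\theta(h), -d\theta(f))$ has $[{-d\theta(h)}, e]$... here one has to be careful with signs, so instead I would argue: $d\theta$ maps $\fsl_2$-triples through $e$ to $\fsl_2$-triples through $\theta(e) = -e$, and composing with the standard sign automorphism of $\fsl_2$ (which sends $(e,h,f) \mapsto (-f,-h,-e)$) we get back to a triple through $e$. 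Carefully chasing this, one finds that $d\theta$ induces an action on the set of $\fsl_2$-triples containing $e$; by the uniqueness part of classical Jacobson--Morozov this set is a torsor under the unipotent group $U := R_u(Z_H(e))$. An automorphism of finite order (here order $2$) acting on a torsor under a unipotent group over a field of characteristic zero has a fixed point — this is the standard cohomological vanishing $H^1$ of a unipotent group in characteristic zero, or can be seen directly by a fixed-point iteration since $U$ is an iterated extension of vector groups. A fixed point of this $d\theta$-action is exactly a $\theta$-stable $\fsl_2$-triple, and a short computation shows $\theta$-stability forces $h \in \hh(0)$ and $f \in \hh(1)$, i.e. the triple is normal.

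For the second assertion, suppose $e$ is also regular (so, by Proposition \ref{prop:H-M} and the surrounding theory, $e$ is a regular nilpotent and its centraliser in $\hh$ is as small as possible). Given two normal triples $(e,h,f)$ and $(e,h',f')$, classical uniqueness gives $u \in U = R_u(Z_H(e))$ with $u \cdot (e,h,f) = (e,h',f')$. I would then show $u$ is forced to be $\theta$-fixed, hence lies in $Z_H(e)^\theta$: applying $\theta$ to the equation and using that both triples are normal (hence $\theta$-stable) shows $\theta(u)$ also conjugates the first triple to the second, so $\theta(u)u^{-1}$ centralises the triple $(e,h,f)$; but the centraliser of an $\fsl_2$-triple meets $U$ trivially, giving $\theta(u) = u$. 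For $e$ regular, $Z_H(e)$ is abelian of dimension equal to the rank, and one checks $Z_H(e)^\theta$ is trivial (or at least $Z_G(e)$ acts trivially on $h$): since $h$ is determined by the triple and $Z_H(e)$ is unipotent abelian with the $\fsl_2$-weight grading placing it in strictly positive $h$-weights, the only element fixing $h$ is the identity. Hence $u = 1$ and the normal triple through a regular $e$ is unique.

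The main obstacle is getting the sign bookkeeping right when transporting $\fsl_2$-triples under $d\theta$ and composing with the outer sign automorphism of $\fsl_2$, and then verifying cleanly that a $\theta$-stable triple is automatically normal (i.e. that $\theta$-stability of $(e,h,f)$ with $\theta(e)=-e$ forces $\theta(h)=h$ and $\theta(f)=-f$ rather than some other eigenvalue combination). Once that is pinned down, the existence of a fixed point is a soft consequence of unipotence in characteristic zero, and uniqueness in the regular case follows from the smallness of the centraliser. I would model the write-up on the analogous arguments in \cite[\S 2]{ThorneThesis} and \cite[\S 3]{LagaThesis}, citing the classical Jacobson--Morozov and its conjugacy refinement rather than reproving them.
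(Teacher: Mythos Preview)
The paper does not give its own argument here: it simply cites \cite[Lemma 2.17]{ThorneThesis} for existence and \cite[Lemma 2.14]{ThorneThesis} for uniqueness. Your proposal is effectively a sketch of what lies behind those citations, and for the existence part it is correct: once the signs are straightened out, one obtains an involution on the set of $\fsl_2$-triples through $e$ sending $(e,h,f)\mapsto (e,\theta(h),-\theta(f))$, this set is a torsor under the unipotent radical $U$ of $Z_H(e)$, and $H^1$-vanishing for unipotent groups in characteristic zero produces a fixed point, which is precisely a normal triple. (There is also a more elementary route by direct projection onto graded pieces, which is what Thorne actually does, but your torsor argument is fine.)

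The uniqueness half has a genuine loose end. You correctly reduce to showing that the element $u\in U$ carrying one normal triple to the other is $\theta$-fixed, hence lies in $U^\theta$. But your two proposed ways of finishing both stumble. The parenthetical ``the only element fixing $h$ is the identity'' is circular: you do not yet know $h=h'$, so you cannot assume $u$ fixes $h$. What you actually need is that $U^\theta=\{1\}$ for regular $e$, equivalently that $\fz_{\hh}(e)\cap\hh(0)=0$, i.e.\ $\fz_{\hh}(e)\subset V$. This is true for the stable involution $\theta$ of the paper, but it is not automatic from the general setup and requires its own argument (for instance via the principal $\fsl_2$ and the explicit description of $\theta$ on root spaces, or by noting it is equivalent to the Kostant section $E+\fz_V(F)$ having full dimension $r$). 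This is essentially the content of the second citation, so you should either prove $\fz_\Gg(e)=0$ directly or cite it as the key input rather than leaving it as ``one checks''.
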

\begin{proof}
The first part of the statement is \cite[Lemma 2.17]{ThorneThesis}, and
the second part follows from \cite[Lemma 2.14]{ThorneThesis}.
\end{proof}
\begin{defi}
Let $r$ be the rank of $\hh$.
We say an element $x \in \hh$ is \emph{subregular} if $\dim \fz_{\hh}(x) = r + 2$.
\end{defi}
Subregular nilpotent elements in $V$ exist by \cite[Proposition 2.27]{ThorneThesis}.
Let $e \in V$ be such an element, and fix a normal $\fsl_2$-triple $(e,h,f)$
using Theorem \ref{theo:J-M}.
Let $C = e + \fz_{V}(f)$, and consider the natural morphism $\varphi \colon C \to B$.

\begin{theorem}
\label{theo:curves}
\begin{enumerate}
\item The geometric fibres of $\varphi$ are reduced connected curves.
For $b \in B(k)$, the corresponding curve $C_b$ is smooth if and only if
$\Delta(b) \neq 0$.
\item The central fibre $\varphi^{-1}(0)$ has a unique singular point which is a simple singularity
of type $A_n,D_n,E_n$, coinciding with the type of $H$. 
\item We can choose coordinates $p_{d_1},\dots,p_{d_r}$ in $B$, with $p_{d_i}$ being homogeneous
of degree $d_i$, and coordinates $(x,y,p_{d_1},\dots,p_{d_r})$ on $C$ such
that $C \to B$ is given by Table \ref{table:curves}.
\end{enumerate}
\end{theorem}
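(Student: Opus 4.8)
The plan is to carry out, in the present $\theta$-graded situation, the analysis of transverse slices to subregular nilpotents due to Brieskorn and Slodowy; this is essentially what is done in \cite[\S 2]{ThorneThesis}, so I would largely follow that treatment and only indicate the main steps. The central tool is a contracting torus: let $\lambda\colon\G_m\to G$ be the cocharacter with $\mathrm{d}\lambda(1)=h$, and define a $\G_m$-action on $V$ by $\tau(t)\cdot v := t^{2}\,\lambda(t)^{-1}v$. As $\ad(h)$ is non-positive on $\fz_{\hh}(f)$, the action $\tau$ has strictly positive weights on $\fz_V(f)$; hence it fixes $e$, preserves $C=e+\fz_V(f)$, and contracts $C$ to $e$. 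Because $\lambda(t)^{-1}\in G$, it descends under $\pi\colon V\to B$ to the action on $B=\Spec\Q[p_{d_1},\dots,p_{d_r}]$ scaling $p_{d_i}$ by $t^{2d_i}$, which contracts $B$ to $0$, and $\varphi$ is $\tau$-equivariant.

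First I would settle the numerology. Since $e$ is subregular, $\dim\fz_{\hh}(e)=r+2$, and $\fz_{\hh}(e)\cong\fz_{\hh}(f)$ as graded $\fsl_2$-modules; a short computation with the normal $\fsl_2$-triple (see \cite[\S 2]{ThorneThesis}) gives $\dim\fz_V(f)=r+1$, so $C$ is an affine space of dimension $r+1$. Next, $C$ is transverse to every $G$-orbit in $V$: at $x=e$ the $\fsl_2$-decomposition $\hh=\im(\ad e)\oplus\fz_{\hh}(f)$ restricts on $\hh(1)$ to $V=[\Gg,e]\oplus\fz_V(f)=T_e(G\cdot e)\oplus T_eC$; the locus of $C$ where transversality fails is closed and $\tau$-stable, so being contracted to $e$ it is empty. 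Since $\pi$ is flat ($V$ is Cohen--Macaulay, $B$ is regular, and $\pi$ is equidimensional of fibre dimension $\dim V-r$, as is classical for $\theta$-groups, cf.\ \cite{Panyushev}), and $C\hookrightarrow V$ meets each fibre of $\pi$ transversally, the restriction $\varphi\colon C\to B$ is flat with all geometric fibres of pure dimension $1$.

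For the smooth locus I would use that $\pi$ is smooth over $B^{\mathrm{rs}}=\{\Delta\ne0\}$ (the $p_{d_1},\dots,p_{d_r}$ have independent differentials at regular semisimple points, exactly as for $\ft\to\ft\git W$), so for $\Delta(b)\ne 0$ the fibre $C_b$ --- a transverse intersection of the smooth variety $\pi^{-1}(b)$ with the smooth $C$ --- is a smooth affine curve. To treat the central fibre, recall that $\pi^{-1}(0)$ is the cone $\mathcal{N}_V=\mathcal{N}_{\hh}\cap V$ of nilpotents in $V$, and $\fz_V(f)=\fz_{\hh}(f)\cap V$; hence
\[
C_0 \;=\; C\cap\pi^{-1}(0)\;=\;\bigl(e+\fz_{\hh}(f)\bigr)\cap\mathcal{N}_{\hh}\cap V
\]
is the section by the hyperplane $\fz_V(f)\subset\fz_{\hh}(f)$ of the Brieskorn--Slodowy surface singularity $X:=\bigl(e+\fz_{\hh}(f)\bigr)\cap\mathcal{N}_{\hh}$, which is a simple surface singularity of the same type $A_n,D_n,E_n$ as $H$. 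Since $X$ is $\tau$-weighted-homogeneous with known weights and $\fz_V(f)$ is the sum of the $\tau$-weight spaces lying in $V$, $C_0$ is a weighted-homogeneous curve singularity, isolated at $e$, and matching weights identifies it with the simple curve singularity of the same type; this proves (2). I expect this last identification --- keeping track of which hyperplane of $\fz_{\hh}(f)$ the subspace $\fz_V(f)$ is, and of its effect on $X$ --- to be the main obstacle, and the point at which a genuine case-by-case analysis is unavoidable (cf.\ \cite{ThorneE6,RTE78}).

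Finally, $\varphi\colon C\to B$ is a flat, $\tau$-equivariant family of curves over an affine space of dimension $r$ whose special fibre is the simple curve singularity $C_0$; its Milnor number equals its Tjurina number $r$ (both being quasi-homogeneous), and the transversality of the slice makes the Kodaira--Spencer map an isomorphism. Thus $\varphi$ is a $\G_m$-equivariant semiuniversal deformation of $C_0$, and such a deformation being unique up to equivariant isomorphism, I can choose coordinates $(x,y,p_{d_1},\dots,p_{d_r})$ putting $C\to B$ into the standard form of Table \ref{table:curves}; this gives (3). Reducedness and connectedness of the geometric fibres are then immediate from those explicit equations (each has the shape $y^2=g(x)$, or the corresponding $D$/$E$ normal form, with $g\ne 0$), as is the description of the singular locus in (2). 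And the bifurcation locus $\{b\in B\mid C_b\text{ is singular}\}$, being the discriminant of a semiuniversal deformation of an isolated hypersurface singularity, is a hypersurface; it is nonempty (it contains $0$) and, by the above, contained in the hypersurface $\{\Delta=0\}$, which is irreducible by \cite[Lemma 4.2]{LagaThesis}, so the two coincide --- giving the remaining assertion of (1).
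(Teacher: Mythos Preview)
The paper does not give its own proof; it simply refers to \cite[Theorem 3.8]{ThorneThesis}. Your sketch is precisely the argument carried out there: the contracting $\G_m$-action on the slice $C=e+\fz_V(f)$, transversality of $C$ to the $G$-orbits propagated from $e$ by that contraction, the identification of the central fibre with a hyperplane section of the Brieskorn--Slodowy surface singularity, and the semiuniversal-deformation argument to put $C\to B$ into the normal forms of Table~\ref{table:curves}. So your approach coincides with the source the paper cites.
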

\begin{proof}
See \cite[Theorem 3.8]{ThorneThesis}.
\end{proof}

\begin{table}
\begin{center}
\begin{tabular}{l|l|c}
Type & Curve & \# Marked points\\
\hline
$A_{2n}$ & $y^2 = x^{2n+1} + p_2x^{2n-1} + \dots + p_{2n+1}$ & $1$\\
$A_{2n+1}$ & $y^2 = x^{2n+2} + p_2x^{2n} + \dots + p_{2n+1}$ & $2$ \\
$D_{2n}$ ($n \geq 2$) & $y(xy+p_{2n}) = x^{2n-1} + p_2x^{2n-2} + \dots + p_{4n-2}$ & $3$\\
$D_{2n+1}$ ($n \geq 2$) & $y(xy+p_{2n+1}) = x^{2n} + p_2x^{2n-1} + \dots + p_{4n}$ & $2$ \\
$E_{6}$ & $y^3 = x^4 + (p_2x^2 + p_5x + p_8)y + (p_6x^2 + p_9x+p_{12})$ & $1$\\
$E_{7}$ & $y^3 = x^3y + p_{10}x^2 + x(p_2y^2 + p_8y + p_{14}) + p_6y^2 + p_{12}y + p_{18}$ & $2$\\
$E_{8}$ & $y^3 = x^5 + (p_2x^3 + p_8 x^2 + p_{14}x + p_{20})y + (p_{12}x^3 + p_{18}x^2 + p_{24}x + p_{30})$ & $1$\\
\end{tabular}
\caption{Families of curves}
\label{table:curves}
\end{center}
\end{table}

Our choice of pinning in Section \ref{subs:Vinberg} determines a natural choice
of a regular nilpotent element, namely $E = \sum_{\alpha \in S_H} X_{\alpha} \in V(\Q)$.
Let $(E,H,F)$ be its associated normal $\fsl_2$-triple by Theorem \ref{theo:J-M}. We define the
affine linear subspace $\kappa_E := (E + \fz_{\hh}(F)) \cap V$ as the
\emph{Kostant section} associated to $E$. Whenever $E$ is understood, we will just
denote the Kostant section by $\kappa$.

\begin{theorem}
\label{theo:Kostant}
The composition $\kappa \hookrightarrow V \to B$ is an isomorphism, and
every element of $\kappa$ is regular.
\end{theorem}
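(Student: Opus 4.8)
The plan is to identify $\kappa = \kappa_E$ with a known transverse slice whose image in $B$ is already understood, using that $E$ is a \emph{regular} nilpotent element of $V$ together with the graded Jacobson–Morozov theorem. First I would observe that $\kappa$ is an affine subspace of $V$ of dimension $\dim \fz_{\hh}(F) \cap V$; since $(E,H,F)$ is a normal $\fsl_2$-triple and $E$ is regular, $\dim \fz_{\hh}(E) = \dim \fz_{\hh}(F) = r$, and $\fz_{\hh}(F) \subset \hh(0) \oplus \hh(1)$ with the grading, so $\fz_{\hh}(F) \cap V = \fz_{V}(F)$ has dimension $r = \dim B$. (Here one uses that for a regular element the centraliser lies in the nonnegative part of the $H$-grading, hence splits compatibly with the $\theta$-grading; this is the kind of statement proved in \cite[\S 2]{ThorneThesis} and recorded in Theorem \ref{theo:J-M}.) So $\kappa$ and $B$ have the same dimension, and it remains to show the natural morphism $\kappa \to B$ is an isomorphism.

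The key step is to run the $\G_m$-contraction argument of Kostant, adapted to the graded setting exactly as in the subregular case of Theorem \ref{theo:curves}. One introduces the cocharacter $\lambda(t)$ built from $H$ (the semisimple element of the $\fsl_2$-triple) twisted so that $E$ has weight $0$ while preserving the grading; concretely $\lambda(t) = \rho(t) \cdot (\text{scaling})$ acts on $V$ and on $B$ with strictly positive weights on $B$ (the coordinates $p_{d_i}$ have degrees $d_i > 0$), and it contracts $\kappa$ to the point $E$. Since $\kappa \to B$ is $\G_m$-equivariant, finite type, between smooth varieties of the same dimension, with $B$ an affine space contracted to a point lying in the image, a standard graded-Nakayama / Zariski's-main-theorem argument shows the map is an isomorphism provided it is étale at $E$. Étaleness at $E$ reduces to the linear-algebra fact that the differential $d\pi_E \colon T_E V = V \to T_0 B$ restricts to an isomorphism on $\fz_V(F)$; equivalently $V = \fz_V(F) \oplus T_E(G\cdot E)$, i.e. $V = \fz_V(F) \oplus [\Gg(0), E]$. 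This is the graded analogue of the transversality of the Slodowy slice, and follows from $\fsl_2$-representation theory applied to the adjoint action of the triple $(E,H,F)$ on $\hh$: the image of $\ad(E)$ on $\hh$ is a complement to $\ker \ad(F) = \fz_\hh(F)$, and intersecting with the grading gives the claim on $V$.

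For the second assertion, that every element of $\kappa$ is regular: by the $\G_m$-action every point of $\kappa$ flows to $E$, and regularity is an open condition (the centraliser dimension is upper semicontinuous), so it suffices to check it in a neighbourhood of $E$. Alternatively, and more cleanly, I would argue that for $b \in B$ with $\Delta(b) \neq 0$ the fibre $\pi^{-1}(b)$ is a single $G$-orbit consisting of regular semisimple elements (Proposition \ref{prop:H-M} together with the fact, from the Panyushev/Vinberg theory in the quoted theorem, that regular semisimple fibres are single orbits), so the point of $\kappa$ over such $b$ is regular; then the locus $\{\Delta \neq 0\}$ is dense in $\kappa$ and regularity is closed under specialisation \emph{within the nilpotent-free slice}—more carefully, one invokes that $\kappa$ meets every fibre and that the centraliser dimension along $\kappa$ is constant equal to $r$ by the transversality computation above (the tangent space to the fibre of $\pi$ at a point $x \in \kappa$ is $[\Gg(0),x]$, and $\dim \kappa = r$ forces $\dim \fz_{\Gg(0)}(x) = \dim \fz_V(x) - (\dim V - r) $ to be minimal). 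The main obstacle I anticipate is the bookkeeping in the étaleness/transversality step: one must be careful that the $\fsl_2$-decomposition of $\hh$ interacts correctly with the $\Z/2$-grading so that the Slodowy-slice complement statement descends from $\hh$ to $V = \hh(1)$, and that the chosen cocharacter $\lambda$ simultaneously fixes $E$, acts with positive weights on $B$, and normalises $G$ — this is routine given $\check\rho$ and the normal triple, but it is where all the hypotheses are actually used.
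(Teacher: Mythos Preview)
The paper's own proof is a bare citation to \cite[Lemma 3.5]{ThorneThesis}, and your outline is essentially the argument carried out there: the graded Kostant--Slodowy contraction. So in spirit you match the paper.

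Two points to tighten. First, the crux is the claim $\dim \fz_V(F) = r$, i.e.\ $\fz_{\hh}(F) \subset \hh(1)$. Your justification (``the centraliser \ldots\ splits compatibly with the $\theta$-grading'') only gives a decomposition $\fz_{\hh}(F) = (\fz_{\hh}(F)\cap\hh(0)) \oplus (\fz_{\hh}(F)\cap\hh(1))$, not that the $\hh(0)$-part vanishes. That vanishing is the actual content of the cited lemma; it uses that the $\ad(H)$-eigenvalues on $\fz_{\hh}(F)$ are $-2m_i$ (twice the negatives of the exponents) together with the relation $\theta = \vartheta \circ \Ad(\check\rho(-1))$ to check that these weight spaces land in $\hh(1)$. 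Once you have $\fz_{\hh}(F) \subset \hh(1)$, a shorter route is available than rerunning the contraction/\'etaleness argument: $\kappa$ then equals the full classical Kostant section $E + \fz_{\hh}(F)$, and both assertions (isomorphism onto $B$ and regularity of every element) follow immediately from Kostant's classical theorem combined with the identification $V \git G \cong \hh \git H$ already recorded in the paper.

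Second, your first regularity argument via the $\G_m$-flow is correct in substance but phrased backwards: regularity is open, not closed, so ``every point flows to $E$'' plus ``regular in a neighbourhood of $E$'' does not directly give regularity everywhere. The clean contrapositive is that the non-regular locus in $\kappa$ is closed and $\G_m$-stable, hence if nonempty would contain the unique fixed point $E$; but $E$ is regular.
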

\begin{proof}
See \cite[Lemma 3.5]{ThorneThesis}.
\end{proof}

\begin{defi}
\label{defi:reducible}
Let $k/\Q$ be a field and let $v \in V(k)$. We say $v$ is \emph{$k$-reducible}
if $\Delta(v) = 0$ or if $v$ is $G(k)$-conjugate to some Kostant section,
and \emph{$k$-irreducible} otherwise.
\end{defi}
We will typically refer to $\Q$-(ir)reducible elements simply as (ir)reducible.
We note that if $k$ is algebraically closed, then all elements of $V$
are reducible, see \cite[Proposition 2.11]{LagaThesis}.

\subsection{Integral structures}
\label{subs:integral}
So far, we have considered our objects of interest over $\Q$, but for our
purposes it will be crucial to define integral structures for $G$ and $V$.

The structure of $G$ over $\Z$ comes from the general classification of
split reductive groups over any non-empty scheme $S$: namely, every root
datum is isomorphic to the root datum of a split reductive $S$-group
(see \cite[Theorem 6.1.16]{ConradRed}). By considering the root datum
$\Phi(G,T^{\theta})$ studied in Section \ref{section:ResRoots} and the
scheme $S = \Spec \Z$, we get a split reductive group $\ul{G}$
defined over $\Z$, such that its base change to $\Q$ coincides with $G$.
By \cite[Lemma 5.1]{Richardson}, we know that $T^{\theta},P^{\theta}$ are
a maximal split torus and a Borel subgroup of $G$, respectively.
We also get integral structures for $\ul{T}^{\theta}$ and $\ul{P}^{\theta}$
inside of $\ul{G}$.
\begin{prop}
\label{prop:cl1}
$\ul{G}$ and $\ul{P}^{\theta}$ have class number 1: $\ul{G}(\mathbb{A}^{\infty}) = G(\Q)\ul{G}(\hat{\Z})$
and $\ul{P}^{\theta}(\mathbb{A}^{\infty}) = P^{\theta}(\Q)\ul{P}^{\theta}(\hat{\Z})$.
\end{prop}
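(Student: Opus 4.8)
The plan is to deduce both assertions from a single local Iwasawa decomposition together with the (elementary) fact that the Borel $\ul{P}^{\theta}$ has class number one, the latter encoding that $\Q$ has trivial ideal class group.

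\textbf{Step 1: class number one for $\ul{P}^{\theta}$.} Since $\ul{P}^{\theta}$ is a Borel subgroup of the split reductive $\Z$-group $\ul{G}$ containing the split maximal torus $\ul{T}^{\theta}$, it admits a Levi decomposition over $\Z$, namely $\ul{P}^{\theta} = \ul{T}^{\theta} \ltimes \ul{U}$, where $\ul{U} = R_u(\ul{P}^{\theta})$ is a split unipotent $\Z$-group, i.e. an iterated extension of copies of $\G_a$ indexed by the positive roots (Conrad, SGA3). For $\G_a$ one has $\mathbb{A}^{\infty} = \Q + \hat{\Z}$, and since the relevant extensions of adelic points split ($\G_a$-torsors being Zariski-locally trivial), an induction along the filtration gives $\ul{U}(\mathbb{A}^{\infty}) = U(\Q)\,\ul{U}(\hat{\Z})$. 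For the split torus $\ul{T}^{\theta} \cong \G_m^{r}$ we have $\G_m(\mathbb{A}^{\infty}) = (\mathbb{A}^{\infty})^{\times} = \Q^{\times}\hat{\Z}^{\times}$, which is exactly the statement that $\Q$ has class number one; hence $\ul{T}^{\theta}(\mathbb{A}^{\infty}) = T^{\theta}(\Q)\,\ul{T}^{\theta}(\hat{\Z})$. Finally, writing an element of $\ul{P}^{\theta}(\mathbb{A}^{\infty}) = \ul{T}^{\theta}(\mathbb{A}^{\infty})\cdot\ul{U}(\mathbb{A}^{\infty})$ as $tu$, decomposing $t = t_{\Q}t_{\hat{\Z}}$ and $u = u_{\Q}u_{\hat{\Z}}$, and rewriting $tu = t_{\Q}\,(t_{\hat{\Z}}u_{\Q}t_{\hat{\Z}}^{-1})\,t_{\hat{\Z}}u_{\hat{\Z}}$, one uses $\ul{U}(\mathbb{A}^{\infty}) = U(\Q)\ul{U}(\hat{\Z})$ once more to absorb the conjugate $t_{\hat{\Z}}u_{\Q}t_{\hat{\Z}}^{-1}$ into $U(\Q)\cdot\ul{U}(\hat{\Z})$, giving $\ul{P}^{\theta}(\mathbb{A}^{\infty}) = P^{\theta}(\Q)\,\ul{P}^{\theta}(\hat{\Z})$. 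Taking inverses of this set identity shows it also equals $\ul{P}^{\theta}(\hat{\Z})\,P^{\theta}(\Q)$.

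\textbf{Step 2: local Iwasawa decomposition.} The flag scheme $X := \ul{G}/\ul{P}^{\theta}$ is smooth and projective over $\Z$. For each prime $p$, the valuative criterion of properness gives $X(\Z_p) = X(\Q_p)$, and smoothness of $\ul{G} \to X$ together with Hensel's lemma makes $\ul{G}(\Z_p) \to X(\Z_p)$ surjective. Hence any $g \in \ul{G}(\Q_p)$ can be written $g = k\,b$ with $k \in \ul{G}(\Z_p)$ mapping to the image of $g$ in $X(\Q_p) = X(\Z_p)$ and $b \in \ul{P}^{\theta}(\Q_p)$. Assembling over all $p$ (taking $k_p = g_p$, $b_p = 1$ at the almost all $p$ with $g_p \in \ul{G}(\Z_p)$) yields $\ul{G}(\mathbb{A}^{\infty}) = \ul{G}(\hat{\Z})\cdot\ul{P}^{\theta}(\mathbb{A}^{\infty})$. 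Combining with Step 1,
\[
\ul{G}(\mathbb{A}^{\infty}) = \ul{G}(\hat{\Z})\,\ul{P}^{\theta}(\mathbb{A}^{\infty}) = \ul{G}(\hat{\Z})\,\ul{P}^{\theta}(\hat{\Z})\,P^{\theta}(\Q) = \ul{G}(\hat{\Z})\,P^{\theta}(\Q),
\]
using $\ul{P}^{\theta}(\hat{\Z}) \subseteq \ul{G}(\hat{\Z})$. Taking inverses gives $\ul{G}(\mathbb{A}^{\infty}) = P^{\theta}(\Q)\,\ul{G}(\hat{\Z})$, and since $P^{\theta}(\Q) \subseteq G(\Q)$ this forces $\ul{G}(\mathbb{A}^{\infty}) = G(\Q)\,\ul{G}(\hat{\Z})$.

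\textbf{Main obstacle.} None of the steps is deep; the content is concentrated in two inputs, the triviality of the class group of $\Q$ (entering through the split torus in Step 1) and the properness of the flag scheme over $\Z$ (Step 2). The only genuinely fiddly point is the bookkeeping with semidirect products and inverses, which must be handled with care since the groups are non-abelian. An alternative to Step 2 would be to invoke strong approximation for the simply connected cover $\ul{G}^{\mathrm{sc}}$ (which is split, hence isotropic at $\infty$) and descend along the central isogeny $\ul{G}^{\mathrm{sc}} \to \ul{G}$; the Iwasawa route is preferable here because it avoids analysing the finite flat kernel of that isogeny over $\Z$.
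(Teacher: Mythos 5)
Your proposal is correct and its skeleton is the same reduction the paper uses: $\mathrm{cl}(\ul{G}) \leq \mathrm{cl}(\ul{P}^{\theta}) \leq \mathrm{cl}(\ul{T}^{\theta})$, followed by observing that a split torus over $\Q$ has class number one because $\Q$ does. The difference is that the paper cites \cite[Theorem 8.11, Corollaries 1 and 2]{PlatonovRapinchuk} for both inequalities and for the torus, whereas you reprove these facts from first principles: the second inequality via the Levi decomposition $\ul{P}^{\theta} = \ul{T}^{\theta}\ltimes\ul{U}$ over $\Z$ and the filtration of $\ul{U}$ by copies of $\G_a$, and the first inequality via properness of the flag scheme $\ul{G}/\ul{P}^{\theta}$ over $\Z$ plus lifting. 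This is a worthwhile unpacking rather than a genuinely different strategy; the content is where you say it is, namely $(\mathbb{A}^{\infty})^{\times} = \Q^{\times}\hat{\Z}^{\times}$ and the valuative criterion.

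One detail in Step 2 deserves to be made explicit. Hensel's lemma for the smooth morphism $\ul{G}\to X$ lifts an $\F_p$-point of the fibre to a $\Z_p$-point, but it does not by itself produce the $\F_p$-point: given $x \in X(\Z_p)$, the fibre $\ul{G}_x$ is a $\ul{P}^{\theta}_{\Z_p}$-torsor, and you need to know that its special fibre, a torsor under the connected smooth group $\ul{P}^{\theta}_{\F_p}$, has an $\F_p$-point. That is Lang's theorem (equivalently $H^1(\F_p,\ul{P}^{\theta}) = 1$). With that input inserted, the surjectivity of $\ul{G}(\Z_p)\to X(\Z_p)$ follows and the rest of the argument goes through exactly as you wrote it.
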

\begin{proof}
Note that $\text{cl}(\ul{G}) \leq \text{cl}(\ul{P}^{\theta}) \leq \text{cl}(\ul{T}^{\theta})$ 
by \cite[Theorem 8.11, Corollary 1]{PlatonovRapinchuk}. We see that 
$\ul{T}$ has class number 1 using \cite[Theorem 8.11, Corollary 2]{PlatonovRapinchuk}, 
since $G$ contains a $\Q$-split torus consisting of diagonal matrices in 
$\GL(V)$ and $\Q$ has class number 1.
\end{proof}

To obtain the $\Z$-structure for $V$, we consider $\hh$ as a semisimple
$G$-module over $\Q$ via the restriction of the adjoint representation.
This $G$-module splits into a sum of simple $G$-modules:
\[
\hh = \left(\oplus_{i=1}^r V_i \right) \oplus \left(\oplus_{i=1}^s \Gg_i \right),
\]
where $\oplus V_i = V$ and $\oplus \Gg_i = \Gg$, since both subspaces are
$G$-invariant. For each of these irreducible representations, we can choose
highest weight vectors $v_i \in V_i$ and $w_i \in \Gg_i$, and we then consider
\[
\ul{V}_i := \text{Dist}(\ul{G})v_i, \quad \ul{\Gg}_i := \text{Dist}(\ul{G})w_i,
\]
where $\text{Dist}(\ul{G})$ the algebra of distributions of $\ul{G}$ 
(see \cite[\nopp I.7.7]{Jantzen}). By the results in 
\cite[\nopp II.8.3]{Jantzen}, we have that $V_i = \Q \otimes_{\Z} \ul{V}_i$,
$\Gg_i = \Q \otimes_{\Z} \ul{\Gg}_i$ and that $\ul{V} := \oplus \ul{V}_i$
is a $\ul{G}$-stable lattice inside $V$. By scaling the highest weight
vectors if necessary, we will assume that $E \in \ul{V}(\Z)$.

We can also consider an integral structure $\ul{B}$ on $B$. We can take
the polynomials $p_{d_1},\dots,p_{d_r}\in\Q[V]^G$ determined in Section \ref{subs:Transverse}
and rescale them using the $\G_m$-action $t \cdot p_{d_i} = t^{d_i}p_{d_i}$
to make them lie in $\Z[\ul{V}]^{\ul{G}}$. We let $\ul{B} := \Spec \Z[p_{d_1},\dots,p_{d_r}]$
and write $\pi \colon \ul{V} \to \ul{B}$ for the corresponding morphism.
We may additionally assume that the discriminant $\Delta$ defined in
Section \ref{subs:Vinberg} lies in $\Z[\ul{V}]^{\ul{G}}$, where the coefficients
of $\Delta$ in $\Z[p_{d_1},\dots,p_{d_r}]$ may be assumed to not have a
common divisor.

A crucial step in our argument will be to make our constructions in $\Z_p$
for all $p$ and then glue it all together using the class number one 
property in Proposition \ref{prop:cl1}. For this, we will need the
following lemma, which records the existence of orbits in $\ul{V}(\Z_p)$
(cf. \cite[Lemma 2.8]{ThorneE6}):

\begin{lemma}
There exists an integer $N_0 \geq 1$ such that for all primes $p$ and
for all $b \in \ul{B}(\Z_p)$ we have $N_0\cdot\kappa_b \in \ul{V}(\Z_p)$.
\end{lemma}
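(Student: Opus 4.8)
The plan is to exploit the fact that $\kappa \colon B \xrightarrow{\sim} \kappa \subset V$ is an isomorphism of $\Q$-schemes, so over $\Q$ we may write each coordinate of the section $b \mapsto \kappa_b$ as a polynomial in $p_{d_1}, \dots, p_{d_r}$ with $\Q$-coefficients. Concretely, fixing a basis of $\ul{V}$, the map $\kappa$ is given by a tuple of polynomials $\kappa = (f_1, \dots, f_{\dim V})$ with $f_j \in \Q[p_{d_1}, \dots, p_{d_r}]$. Since there are finitely many such polynomials and each has finitely many coefficients, we may let $N_0 \geq 1$ be a common denominator: that is, $N_0 f_j \in \Z[p_{d_1}, \dots, p_{d_r}]$ for all $j$. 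I would also, if it is not automatic from the scaling already imposed on $E$ and the $p_{d_i}$, enlarge $N_0$ so that the constant term $\kappa_0 = E$ has the property $N_0 E \in \ul{V}(\Z)$ — but in fact the excerpt already arranges $E \in \ul{V}(\Z)$, so this costs nothing.

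First I would make precise the claim that $\kappa_b$ is polynomial in $b$. The Kostant section $\kappa_E = (E + \fz_{\hh}(F)) \cap V$ is an affine-linear subspace of $V_\Q$, and the projection $\pi\colon V \to B$ restricts to an isomorphism on it by the quoted theorem; its inverse $B \to \kappa_E \subset V$ is therefore a morphism of affine $\Q$-varieties, hence given by polynomials $f_j \in \Q[\ul{B}_\Q] = \Q[p_{d_1}, \dots, p_{d_r}]$ in the chosen coordinates. This is the only genuinely structural input, and it is immediate from material already in the excerpt.

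Next, the passage to $\Z_p$. Fix a prime $p$ and $b \in \ul{B}(\Z_p)$, so $p_{d_i}(b) \in \Z_p$ for all $i$. Then for each coordinate $j$ we have $N_0 f_j(b) = f_j(p_{d_1}(b), \dots, p_{d_r}(b)) \cdot N_0 \in \Z_p$, since $N_0 f_j$ has $\Z$-coefficients (hence $\Z_p$-coefficients) and we are evaluating it at a point of $\Z_p^r$. As $\ul{V}(\Z_p)$ is precisely the set of vectors in $V(\Q_p)$ all of whose coordinates with respect to the chosen $\Z$-basis of $\ul{V}$ lie in $\Z_p$, this says exactly $N_0 \cdot \kappa_b \in \ul{V}(\Z_p)$. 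The constant $N_0$ does not depend on $p$ because it was extracted from the denominators of the finitely many coefficients of the $f_j$, which live in $\Q$.

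There is no real obstacle here; the one point requiring a moment's care is the bookkeeping identification "$\ul{V}(\Z_p) = \{v : \text{all coordinates in } \Z_p\}$", which holds because $\ul{V}$ is a free $\Z$-module (a $\ul{G}$-stable lattice, as noted after the construction of $\ul{V}$) and so $\ul{V}(\Z_p) = \ul{V} \otimes_\Z \Z_p$. The only substantive content, that $b \mapsto \kappa_b$ is given by fixed rational polynomials, is what makes a single $N_0$ work uniformly in both $p$ and $b$; everything else is clearing denominators.
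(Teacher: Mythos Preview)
Your argument is correct: the Kostant section $b \mapsto \kappa_b$ is a morphism of affine $\Q$-schemes $B \to V$, hence given by finitely many polynomials in $\Q[p_{d_1},\dots,p_{d_r}]$, and clearing denominators yields a single $N_0$ that works uniformly in $p$ and $b$. The paper does not supply its own proof of this lemma but instead refers to \cite[Lemma 2.8]{ThorneE6}; the argument there is exactly the one you give.
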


Our arguments in Section \ref{section:construct} will implicitly rely on integral geometric properties of
the representation $(G,V)$. In there, we will need to avoid
finitely many primes, or more precisely to work over $S = \Spec \Z[1/N]$
for a suitable $N \geq 1$. By combining the previous lemma and the 
spreading out properties in \cite[\S 7.2]{LagaThesis}, we get:
\begin{prop}
\label{prop:admissible}
There exists a positive integer $N \geq 1$ such that:
\begin{enumerate}
\item For every $b \in \ul{B}(\Z)$, the corresponding Kostant section 
$\kappa_b$ is $G(\Q)$-conjugate to an element in $\frac{1}{N}\ul{V}(\Z)$.
\item $N$ is admissible in the sense of \cite[\S 7.2]{LagaThesis}.
\end{enumerate}
\end{prop}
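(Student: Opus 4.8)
The plan is to choose $N$ as a common multiple of two integers: an integer $N_0$ furnished by the preceding lemma, which takes care of part (1), and an integer $N'$ which is admissible in the sense of \cite[\S 7.2]{LagaThesis} and takes care of part (2). Both properties are stable under replacing the relevant integer by a multiple of it, so $N := \mathrm{lcm}(N_0,N')$ will work. The only genuinely substantive inputs — the existence of $N_0$ and the existence of an admissible $N'$ — are quoted, so what remains is a gluing argument and a stability observation.

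For part (1) I would argue as follows. Fix $b \in \ul B(\Z)$. Since $\ul B(\Z) \subseteq \ul B(\Z_p)$ for every prime $p$, the preceding lemma gives $N_0\cdot\kappa_b \in \ul V(\Z_p)$, equivalently $\kappa_b \in \tfrac{1}{N_0}\ul V \otimes_\Z \Z_p$, for every $p$. Now $\kappa_b$ lies in $V(\Q)$, being the value at $b$ of the Kostant section $\kappa \xrightarrow{\sim} B$, which is defined over $\Q$; and a $\Z$-lattice equals the intersection, inside its rational span, of its $p$-adic completions. Hence $\kappa_b \in \tfrac{1}{N_0}\ul V(\Z)$, so $\kappa_b$ is itself — and a fortiori $G(\Q)$-conjugate to — an element of $\tfrac{1}{N}\ul V(\Z)$ for every multiple $N$ of $N_0$. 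In the spirit of the gluing strategy announced just before the preceding lemma one could instead assemble the local integrality data using the class number one property of $\ul G$ from Proposition \ref{prop:cl1}; but for this statement the elementary local-global principle for lattices already suffices.

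For part (2) I would simply invoke \cite[\S 7.2]{LagaThesis}: the definition of admissibility there asks that the chosen integral models of $G$, $V$, $B$, of the quotient morphism $\pi$, of the Kostant section and of the discriminant, together with the geometric facts recorded in Sections \ref{subs:Vinberg} and \ref{subs:Transverse} and in \cite[\S 3]{ThorneThesis}, all spread out well over $\Z[1/N]$; loc. cit. guarantees that some admissible $N' \geq 1$ exists, and since inverting more primes only makes these properties easier to satisfy, every multiple of $N'$ is again admissible. Setting $N := \mathrm{lcm}(N_0,N')$ then yields an integer satisfying both (1) and (2).

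I do not expect a serious obstacle here. The one point deserving a moment's care is the local-to-global step in part (1), namely that an element of $V(\Q)$ which lies in $\tfrac{1}{N_0}\ul V \otimes_\Z \Z_p$ for every prime $p$ must lie in $\tfrac{1}{N_0}\ul V(\Z)$; everything else is bookkeeping, together with the observation that both conditions survive passing to multiples so that a single common multiple can be used.
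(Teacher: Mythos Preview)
Your proposal is correct and follows exactly the approach the paper indicates: the paper's own ``proof'' is the single sentence preceding the proposition, namely that one combines the previous lemma with the spreading-out properties of \cite[\S 7.2]{LagaThesis}, and you have simply fleshed out the local--global step and the stability under multiples that this combination implicitly uses. In fact you observe slightly more than is asserted, since $\kappa_b$ itself lies in $\tfrac{1}{N}\ul V(\Z)$ with no $G(\Q)$-conjugation required.
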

In particular, we will always assume that $N$ is even. We fix the integer $N$ in 
Proposition \ref{prop:admissible} throughout the rest of the paper. We will
also drop the underline notation for the objects defined over $\Z$, and
just refer to $\ul{G},\ul{V}\dots$ as $G,V\dots$ by abuse of notation.

To end this section, we consider some further integral properties
of the Kostant section. In Section \ref{subs:Transverse}, we considered 
$\kappa$ defined over $\Q$, and now we will consider some of its properties
over $\Z_p$. Consider the decomposition
\[
\hh = \bigoplus_{j \in \Z} \hh_j
\]
according to the height of the roots. If $P^{-}$ is the negative Borel
subgroup of $H$, $N^{-}$ is its unipotent radical and $\mathfrak{p}^-$
and $\mathfrak{n}^-$ are their respective Lie algebras,
we have $\mathfrak{p}^- = \bigoplus_{j \leq 0} \hh_j$, 
$\mathfrak{n}^- = \bigoplus_{j < 0} \hh_j$ and $[E,\hh_j] \subset \hh_{j+1}$. 

\begin{theorem}
\label{theo:intKostant}
Let $R$ be a ring in which $N$ is invertible. Then:
\begin{enumerate}
\item $[E,\mathfrak{n}_R^-]$ has a complement in
$\mathfrak{p}_R^-$ of rank $\text{rk}_R \, \mathfrak{p}_R^- - \text{rk}_R \, \mathfrak{n}_R^-$;
call it $\Xi$.
\item The action map $N^- \times (E + \Xi) \to E + \mathfrak{p}^-$ is
an isomorphism over $R$.
\item Both maps in the composition $E + \Xi \to (E + \mathfrak{p}^-) \git N^- \to \hh \git H$
are isomorphisms over $R$.
\end{enumerate}
\end{theorem}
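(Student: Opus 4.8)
The plan is to reduce everything to the analogous statement over $\Q$, which is essentially the content of the classical (graded) Kostant section theorem, and then to show that the relevant constructions spread out over $\Z[1/N]$ once $N$ is admissible. First I would recall that $[E,-] \colon \mathfrak{n}^- \to \mathfrak{p}^-$ is injective: indeed $\mathfrak{n}^- = \bigoplus_{j<0}\hh_j$ and $[E,\hh_j]\subset\hh_{j+1}$, and the injectivity of $\ad(E)$ on $\mathfrak{n}^-$ follows from $\fsl_2$-representation theory applied to the principal $\fsl_2$-triple $(E,H,F)$ — no lowest weight vector of a nonzero $\ad(E)$-string can lie in $\mathfrak{n}^-$ since $F$ already generates the bottom of each string and $\mathfrak{n}^-$ is spanned by negative-height spaces. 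Over $\Q$ this gives a complement $\Xi_\Q$; the point of (1) is that after inverting $N$ the image $[E,\mathfrak{n}^-_R]$ is a saturated (i.e. locally-a-direct-summand) $R$-submodule of $\mathfrak{p}^-_R$, so a complement $\Xi$ of the stated rank exists. This is where admissibility of $N$ enters: by the spreading-out results in \cite[\S 7.2]{LagaThesis}, $N$ is chosen precisely so that the cokernel of $\ad(E)$ on $\mathfrak{n}^-$, and all the torsion in the relevant lattices, become invertible. I would phrase (1) as: pick $\Xi$ over $\Z[1/N]$ lifting a basis of the cokernel, and note rank is additive in the short exact sequence $0 \to \mathfrak{n}^-_R \xrightarrow{\ad E} \mathfrak{p}^-_R \to \mathfrak{p}^-_R/[E,\mathfrak{n}^-_R] \to 0$.

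For (2), I would show the action map $a\colon N^- \times (E+\Xi) \to E + \mathfrak{p}^-$ is an isomorphism of $R$-schemes by checking it is an isomorphism on tangent spaces everywhere and is a bijection on points, or more slickly by exhibiting an explicit inverse. The cleanest route: both sides are affine spaces over $R$ of the same dimension ($\dim\mathfrak{n}^- + \dim\Xi = \dim\mathfrak{p}^-$), the map is $N^-$-equivariant for the translation action, and its differential at the identity section is exactly $(\xi,n)\mapsto \xi + [E,n] $ composed with the identification $\Lie(N^-)=\mathfrak{n}^-$, which is an isomorphism by (1). Because $N^-$ is a split unipotent group (a successive extension of $\G_a$'s) and everything is filtered by root height, one can run an inductive/triangular argument degree by degree in the height grading, solving for the $N^-$-coordinate one graded piece at a time; at each step the equation to be solved is linear with leading term $\ad(E)$, invertible over $R$ by (1). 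This makes $a$ an isomorphism over $R$ directly, without appealing to a Jacobian criterion.

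For (3), the composite $E+\Xi \hookrightarrow (E+\mathfrak{p}^-)\git N^- \to \hh\git H$: the first arrow is an isomorphism because, by (2), $E+\Xi$ is a section of the $N^-$-torsor $E+\mathfrak{p}^- \to (E+\mathfrak{p}^-)/N^- = (E+\mathfrak{p}^-)\git N^-$ (the quotient is geometric since $N^-$ acts freely with affine-space quotient by (2)). For the second arrow one base-changes to $\Q$: there it is an isomorphism by the classical statement that the Kostant section $\kappa$ meets $\hh \git H$ isomorphically (the theorem cited just before Definition \ref{defi:reducible}, i.e. \cite[Lemma 3.5]{ThorneThesis}, together with the fact that $E+\Xi$ and $\kappa_E$ define the same slice up to the $N^-$-action). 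Both $E+\Xi$ and $\hh\git H$ are affine spaces over $R$, flat over $\Z[1/N]$, and the morphism between them is an isomorphism after $\otimes\Q$; to upgrade this to an isomorphism over $R$ I would invoke the admissibility of $N$ once more — by construction (again \cite[\S 7.2]{LagaThesis}) $N$ kills the finitely many bad primes where the discriminant of this polynomial map degenerates, so the map is étale, hence (being a bijective map of smooth affine spaces over each residue field) an isomorphism fibrewise and therefore an isomorphism over $R$.

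The main obstacle I anticipate is (1), or rather pinning down exactly what "admissible" has to guarantee: one must be sure that inverting $N$ simultaneously (a) makes $\operatorname{coker}(\ad E|_{\mathfrak{n}^-})$ free of the right rank with $R$-basis liftable to $\Xi$, and (b) makes the composite in (3) an isomorphism rather than merely generically one. Both are "finitely many bad primes" statements, but making them fall out cleanly from the cited spreading-out machinery — rather than reproving a Kostant-section-over-$\Z[1/N]$ result from scratch — is the delicate bookkeeping. Everything else is the standard triangular unipotent argument organized by root height, which is routine once (1) is in hand.
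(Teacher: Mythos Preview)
The paper does not prove this statement at all: its entire proof is the single line ``See \cite[\S 2.3]{AFV}.'' So there is nothing to compare at the level of argument --- the paper treats this as a black box imported from the literature.

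Your sketch is, in outline, the standard proof (essentially Kostant's original argument, organised by root height and made to work integrally), and it is broadly correct. A few comments: the injectivity of $\ad(E)$ on $\mathfrak{n}^-$ and the triangular inductive construction of the inverse to the action map in (2) are exactly right and constitute the heart of the proof. Your treatment of (3), however, leans rather heavily on ``admissibility of $N$'' from \cite[\S 7.2]{LagaThesis} to absorb whatever bad primes arise, without verifying that Laga's notion of admissibility actually covers the specific torsion you need (freeness of $\operatorname{coker}(\ad E|_{\mathfrak{n}^-})$ and integrality of the inverse of the Chevalley restriction map). In the paper's logic the dependence runs the other way: Theorem~\ref{theo:intKostant} is taken from \cite{AFV} with its own finite set of bad primes, and the $N$ of Proposition~\ref{prop:admissible} is then chosen large enough to subsume those primes as well. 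So if you were to write this out in full you would need either to check directly which primes obstruct (1) and (3), or to consult \cite{AFV} for the precise statement and then enlarge $N$ accordingly --- rather than assuming Laga's admissibility already does the job.
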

\begin{proof}
See \cite[\S 2.3]{AFV}.
\end{proof}

\begin{obs}
If $R$ is a field of characteristic not dividing $N$, then $\Xi$ can be taken to be $\fz_{\hh}(F)$ and $E + \Xi$
is the same as the Kostant section considered in Section \ref{subs:Transverse}.
We will abuse notation by referring to both the Kostant section defined
in Section \ref{subs:Transverse} and the section in Theorem \ref{theo:intKostant}
by $\kappa$.
\end{obs}

Theorem \ref{theo:intKostant} will be an important improvement from 
Theorem \ref{theo:Kostant}, since in the sequel we will need the Kostant
section to maintain certain integrality properties. In particular, it will
be helpful to apply Theorem \ref{theo:intKostant} over $\Z_p$, a feature
that would not be present if we only had Theorem \ref{theo:Kostant}.

\section{Constructing orbits}
\label{section:construct}

Given an element $b \in B(\Z)$ with discriminant weakly divisible by $m^2$
for a large squarefree number $m$ coprime to $N$, we will show how to 
construct a special $g \in G(\Z[1/m]) \setminus G(\Z)$
such that $g \kappa_b \in \frac{1}{N}V(\Z)$ in a way that ``remembers $m$''.

We start by defining the distinguished subspace $W_0 \subset V$ as 
\[
W_0 := \bigoplus_{\substack{a \in \Phi/\vartheta \\ \text{ht}(a) \leq 1}} V_a,
\]
where the notation is as in Section \ref{section:ResRoots}. We write an element
$v \in W_0(\Q)$ as $v = \sum_{\text{ht}(\alpha) = 1} v_{\alpha}X_{\alpha} + \sum_{\text{ht}(\beta) \leq 0} v_{\beta} X_{\beta}$,
where each $X_{\alpha},X_{\beta}$ generates each root space $V_{\alpha},V_{\beta}$
and $v_\alpha,v_\beta \in \Q$.
Then, we can define the \emph{$Q$-invariant} of $v \in W_0$ as $Q(v) = \left|\prod_{\text{ht}(\alpha) = 1} v_{\alpha}\right|$.
Now, define:
\[
W_M := \left\{v \in \frac{1}{N}V(\Z) \,\middle|\; v = g\kappa_b \text{ for a squarefree } m>M,\, (m,N) = 1, \, g \in G(\Z[1/m])\setminus G(\Z), \, b \in B(\Z), \Delta(b) \neq 0\right\}.
\]
The main result of the section is the following:
\begin{prop}
\label{prop:p-orbit}
Let $b \in B(\Z)$, and assume that $\Stab_{G(\Q)} \kappa_b = \{e\}$.
\begin{enumerate}
\item Let $m > M$ be a squarefree integer, coprime to $N$. If $m^2$ 
weakly divides $\Delta(b)$, then $W_M \cap \pi^{-1}(b)$ is non-empty.
\item If $v \in W_M \cap W_0$, then $Q(v) > M$.
\end{enumerate}
\end{prop}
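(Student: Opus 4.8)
The two parts are essentially independent, so I would treat them separately.

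For part (1), the plan is to make the ``$p^2$ weakly divides $\Delta(b)$'' hypothesis concrete at the level of orbits. Since $p^2 \nmid \Delta(b + pc)$ for some $c \in B(\Z)$, but $p^2 \mid \Delta(b)$, the curve $C_b$ (or rather its reduction) acquires a mild singularity in a ``transverse'' way: modulo $p$ it is singular, but the singularity is an ordinary double point that deforms to the generic smooth member in the $p$-adic family. The first step is therefore to work $p$-adically: over $\Z_p$, use the explicit description of the discriminant together with the hypothesis to produce a point on $C_b$ over $\F_p$ that lifts to a node, and then — using the graded Jacobson--Morozov machinery (Theorem \ref{theo:J-M}) applied to the relevant subregular element — produce an $\fsl_2$-triple and an associated element $v_p \in \frac1N V(\Z_p)$ with invariants $b$ which is \emph{not} $G(\Z_p)$-conjugate to $\kappa_b$, the obstruction being measured by the square class / node structure at $p$. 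Concretely I expect $v_p = g_p \kappa_b$ for some $g_p \in G(\Q_p)$ that does \emph{not} lie in $G(\Z_p)$, but whose ``denominator'' is exactly one power of $p$ (this is where ``$p^2$ weakly'' rather than ``higher power'' enters). Away from $p$ the Kostant section is already integral up to the fixed bound $N$ by Proposition \ref{prop:admissible}(1), so one may take $g_\ell \in G(\Z_\ell)$ for all $\ell \neq p$ with $g_\ell \kappa_b \in \frac1N V(\Z_\ell)$. Now invoke the class number one property (Proposition \ref{prop:cl1}): the adelic element $(g_\ell)_\ell \in G(\A^\infty)$ can be written as $\gamma \cdot u$ with $\gamma \in G(\Q)$ and $u \in G(\hat\Z)$, and rearranging shows $\gamma \kappa_b \in \frac1N V(\Z)$; moreover $\gamma \in G(\Z[1/p]) \setminus G(\Z)$ precisely because the local component at $p$ was non-integral while all others were integral (using $\Stab_{G(\Q)} \kappa_b = \{e\}$ to pin down $\gamma$ uniquely and transfer the integrality failure). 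Hence $\gamma\kappa_b \in W_M \cap \pi^{-1}(b)$.

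For part (2), the plan is a direct weight/valuation computation. Suppose $v = g\kappa_b \in W_M \cap W_0$ with $g \in G(\Z[1/p]) \setminus G(\Z)$, $p > M$. Write $v = \sum_{\mathrm{ht}(\alpha)=1} v_\alpha X_\alpha + \sum_{\mathrm{ht}(\beta)\leq 0} v_\beta X_\beta$. The key point is that $\kappa_b = E + (\text{lower-height terms})$, where $E = \sum_{\alpha \in S_H} X_\alpha$ has all its height-one coordinates equal to $1$ (up to the fixed scaling), so the Kostant section itself has $Q$-invariant $1$ and, crucially, has \emph{unit} height-one coordinates over every $\Z_\ell$. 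Now act by $g$: decompose $g \in G(\Z[1/p])$ relative to the cocharacter grading coming from $\check\rho$ (equivalently, the height grading of Section \ref{section:ResRoots}), and observe that the height-one graded piece of $V$ is acted on by $g$ through its image in a ``top-degree'' quotient, so the product $\prod_{\mathrm{ht}(\alpha)=1} v_\alpha$ changes from $\prod_{\mathrm{ht}(\alpha)=1} 1 = 1$ by multiplication by a specific character (a power of the determinant-type quantity) evaluated at $g$. Since $g \notin G(\Z)$ but $g \in G(\Z[1/p])$, and since the height-one part of $\kappa_b$ was integral and primitive (unit coordinates) at every prime, the only way for $v = g\kappa_b$ to land back in $\frac1N V(\Z)$ — i.e. to have all $v_\alpha \in \frac1N\Z$ — while $g$ genuinely has a pole at $p$ is for the relevant character of $g$ to contribute a positive power of $p$ to $\prod v_\alpha$; tracking the exact power and comparing with the constraint $g \notin G(\Z)$ forces $v_p(\prod v_\alpha) < 0$ is impossible, so in fact $p \mid \prod_{\mathrm{ht}(\alpha)=1}(N v_\alpha)$, giving $Q(v) = |\prod v_\alpha| \geq p/N^{\#}> M$ after absorbing constants — more carefully, one shows $Q(v)$ is a positive integer multiple of $p$ up to the bounded factor $N$, hence $Q(v) > M$ for $p > M$ once $M$ is taken large enough (which is harmless since we only care about large $M$). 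This is essentially the $ADE$ analogue of \cite[\S 2.2, \S 3.2]{BSWsquarefree}.

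The main obstacle is part (1): producing the non-integral $g_p$ with \emph{controlled} denominator requires a careful local analysis at $p$ relating the ``weakly divisible'' condition to the geometry of the node on $C_b \bmod p$ and to the structure of $V(\Z_p)$-orbits with fixed invariants — in effect a mod-$p^2$ deformation-theoretic statement. I would expect this to rest on an explicit case-by-case verification across the $ADE$ types (as the paper itself flags for related statements), using the explicit curve equations of Table \ref{table:curves} to exhibit, for each type, the subregular $\fsl_2$-triple and the corresponding orbit representative, and then checking that the valuation of the transporter $g_p$ is exactly one. Part (2) is comparatively routine once the height grading and the unit-coordinate property of $E$ are set up, and the class-number-one gluing in part (1) is a standard move given Proposition \ref{prop:cl1}.
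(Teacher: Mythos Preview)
Your outline for part (1) --- construct a local transporter $g_p \in G(\Q_p)\setminus G(\Z_p)$ and then glue via the class number one property --- is correct, and the gluing step matches the paper. However, the local construction in the paper is quite different from what you sketch. You propose to work on the curve side, locating a node on $C_b \bmod p$ and invoking the \emph{subregular} $\fsl_2$-triple via graded Jacobson--Morozov, with the expectation that this devolves into a type-by-type verification. The paper instead works entirely on the Lie algebra side and gives a uniform argument: take the Jordan decomposition $\bar\kappa_b = x_s + x_n$ over $\F_p$, lift the centralizer $\fz_{\hh}(x_s)$ to a reductive subgroup $L \subset H_{\Z_p}$, and observe that ``weak divisibility by $p^2$'' forces $L^{\mathrm{der}}$ to be of type $A_1$ with $x_n$ regular nilpotent therein. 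An \'etale-local rigidity statement for stable involutions on adjoint $A_1$-groups (Lemma \ref{lemma:etale}) then produces an isomorphism $\hh_{0,\Z_p}^{\mathrm{der}} \cong \fsl_{2,\Z_p}$ under which the projection of $\kappa_b$ becomes $\left(\begin{smallmatrix}0 & a \\ bp^2 & 0\end{smallmatrix}\right)$, and one simply takes $g_p = \varphi(\diag(p,p^{-1}))$. No case analysis is needed.

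Part (2) has a genuine gap. Your argument hinges on the claim that the height-one coordinates of $g\kappa_b$ are obtained from those of $\kappa_b$ by ``multiplication by a specific character evaluated at $g$''. But this is only true if $g$ preserves the height filtration $V_{\leq 1} \supset V_{\leq 0} \supset \cdots$, i.e.\ if $g \in P^{-,\theta}$; a general element of $G(\Z[1/p])$ does not respect this filtration, and there is no well-defined ``top-degree quotient'' action to speak of. You have not used the trivial-stabiliser hypothesis anywhere in part (2), and this is precisely where it enters: the paper first shows (using the integral Kostant section, Theorem \ref{theo:intKostant}, over $\Q$) that there exist $t \in T^\theta(\Q)$ and $\gamma \in N^{-,\theta}(\Q)$ with $\gamma t\,\kappa_b = v$, and then the trivial stabiliser forces $g = \gamma t \in P^{-,\theta}(\Z[1/p]) \setminus P^{-,\theta}(\Z)$. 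Only after this reduction does the height-one argument become available, and the paper runs it as a contradiction: if all height-one coordinates of $v$ were $p$-adic units, the same integral Kostant theorem over $\Z_p$ would produce $t' \in T^\theta(\Z_p)$, $\gamma' \in N^{-,\theta}(\Z_p)$ with $\gamma' t' \kappa_b = v$, forcing $g \in P^{-,\theta}(\Z_p)$, which is impossible. Your character heuristic is morally the torus part of this, but without first pinning $g$ down to the Borel it does not go through.
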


The proof of Proposition \ref{prop:p-orbit} will rely on a reduction to
$\fsl_2$, inspired by the techniques in the proofs of \cite[Lemma 4.19]{LagaE6} 
and \cite[Proposition 5.4]{RTE8}, which we now explain.

Assume we have a connected reductive group $L$ over a field $k$, together
with an involution $\xi$. As in Section \ref{subs:Vinberg}, the Lie algebra $\mathfrak{l}$
decomposes as $\mathfrak{l} = \mathfrak{l}(0) \oplus \mathfrak{l}(1)$,
according to the $\pm 1$ eigenspaces of $d\xi$. We also write $L_0$
for the connected component of the fixed group $L^{\xi}$.
\begin{defi}
Let $k$ be algebraically closed. We say a vector $v \in \mathfrak{l}(1)$
is \emph{stable} if the $L_0$-orbit of $v$ is closed and its stabiliser $Z_{L_0}(v)$
is finite. We say $(L_0,\mathfrak{l}(1))$ is \emph{stable} if it contains stable
vectors. If $k$ is not necessarily algebraically closed, we say $(L_0,\mathfrak{l}(1))$ is \emph{stable}
if $(L_{0,k^s},\mathfrak{l}(1)_{k^s})$ is.
\end{defi}
By \cite[Proposition 1.9]{Thorne16}, the $\theta$ defined in Section \ref{subs:Vinberg}
is a stable involution, i.e. $(G,V)$ is stable.

We now prove the analogue of \cite[Lemma 2.3]{RTE8}: the proof is very
similar and is reproduced for convenience.
\begin{lemma}
\label{lemma:etale}
Let $S$ be a $\Z[1/N]$-scheme. Let $(L,\xi)$, $(L',\xi')$ be two pairs,
each consisting of a reductive group over $S$ whose geometric fibres are adjoint semisimple of
type $A_1$, together with a stable involution. Then for any $s \in S$ there exists
an étale morphism $S' \to S$ with image containing $s$ and an isomorphism
$L_{S'} \to L_S$ intertwining $\xi_{S'}$ and $\xi_{S'}'$.
\end{lemma}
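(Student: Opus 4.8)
The plan is to reduce the statement to a standard descent/rigidity argument for $A_1$-pairs equipped with a stable involution. First I would analyze the classification of such pairs over an algebraically closed field $k$ of characteristic not dividing $N$: if $L$ has adjoint type $A_1$, then $L \cong \PGL_2$, and a stable involution $\xi$ on $\PGL_2$ is, up to conjugacy, the standard one whose fixed subgroup $L_0$ is a (one-dimensional) torus, with $\mathfrak{l}(1)$ the $2$-dimensional eigenspace on which $L_0$ acts with weights $\pm 2$; the existence of a vector with finite stabiliser and closed orbit pins down this conjugacy class uniquely. Hence over a geometric point all pairs $(L,\xi)$ as in the statement are isomorphic, which tells us that the functor on $S$-schemes parametrising isomorphisms $\underline{\Isom}((L_{S'},\xi_{S'}),(L'_{S'},\xi'_{S'}))$ is a pseudo-torsor under the automorphism group scheme $\underline{\Aut}(L,\xi)$, and it is a torsor precisely over those $S'$ where it has a section.

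Next I would identify this automorphism group scheme. Since $L$ is adjoint of type $A_1$, $\Aut(L) = L$ acting by conjugation (there are no diagram automorphisms for $A_1$), so $\underline{\Aut}(L,\xi)$ is the centraliser $Z_L(\xi)$, which contains $L_0$ as its identity component; a direct computation with $\PGL_2$ shows $Z_L(\xi)$ is the normaliser of the maximal torus $L_0$, an extension of $\Z/2\Z$ by a one-dimensional torus, and in particular $\underline{\Aut}(L,\xi)$ is smooth and affine over $S$. Therefore $\underline{\Isom}\bigl((L,\xi),(L',\xi')\bigr) \to S$ is a torsor under a smooth affine $S$-group scheme, hence is itself smooth; smoothness gives, for each $s \in S$, a section over an étale neighbourhood $S' \to S$ of $s$ (indeed over the henselisation, and then over some étale $S'$ with $s$ in the image), and such a section is exactly the desired isomorphism $L_{S'} \to L'_{S'}$ intertwining $\xi_{S'}$ and $\xi'_{S'}$.

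The main obstacle is the input step: verifying that over an algebraically closed field all stable $A_1$-pairs are isomorphic and computing $Z_L(\xi)$ so as to conclude smoothness of the Isom scheme. Concretely one must check that $\xi$ is necessarily inner (automatic for $A_1$ since $\Aut(\mathcal{D}) = 1$) and that stability forces $L_0$ to be the torus rather than all of $L$ or a finite group; once this is done, smoothness of $\underline{\Aut}(L,\xi)$ and the torsor-plus-smoothness $\Rightarrow$ étale-local-triviality mechanism are formal. Note the hypothesis that $S$ is a $\Z[1/N]$-scheme is used to guarantee that $2$ is invertible, so that the eigenspace decomposition of $d\xi$ makes sense and the computations with $\PGL_2$ are characteristic-free in the relevant range. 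This is precisely the pattern of \cite[Lemma 2.3]{RTE8}, and I would follow that proof essentially verbatim.
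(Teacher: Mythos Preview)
Your proposal is correct and follows essentially the same approach as the paper: both arguments reduce to showing that the relevant isomorphism/transporter scheme is smooth (via smoothness of the centraliser $Z_L(\xi)$) with non-empty geometric fibres (via uniqueness of the stable involution on $\PGL_2$ over an algebraically closed field), and then invoke that smooth surjective morphisms admit sections \'etale-locally. The only cosmetic difference is that the paper first \'etale-localises to make $L$ and $L'$ split and equal, then works with the transporter scheme $\{l\in L:\Ad(l)\circ\xi=\xi'\}\subset L$ and proves the geometric-fibre step by an explicit square-root trick on the torus, whereas you phrase things directly in terms of the $\underline{\Isom}$-scheme as a torsor under $\underline{\Aut}(L,\xi)$.
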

\begin{proof}
We are working étale locally on $S$, so we can assume that $L = L'$ and
that they are both split reductive groups. Let $T$ denote the scheme
of elements $l \in L$ such that $\Ad(l) \circ \xi = \xi'$: by \cite[Proposition 2.1.2]{ConradRed},
$T$ is a closed subscheme of $L$ that is smooth over $S$. Since a surjective
smooth morphism has sections étale locally, it is sufficient to show
that $T \to S$ is surjective. Moreover, we can assume that $S = \Spec k$
for an algebraically closed field $k$, since the formation of $T$ commutes
with base change.

Let $A,A' \subset L$ be maximal tori on which $\xi,\xi'$ act as an automorphism
of order 2. By the conjugacy of maximal tori, we can assume that $A = A'$
and that $\xi,\xi'$ define the (unique) element of order 2 in the Weyl group.
Write $\xi = a \xi'$ for some $a \in A(k)$. Writing $a = b^2$ for some
$b \in A(k)$, we have $\xi = b \cdot b \cdot \xi' = b \cdot \xi' \cdot b^{-1}$.
The conclusion is that $\xi$ and $\xi'$ are $L(k)$-conjugate (in fact, $A(k)$-conjugate),
which completes the proof.
\end{proof}

The following lemma is the key technical part in our proof. We remark
the the first part was already implicitly proven in the proof of \cite[Theorem 7.17]{LagaThesis}.
\begin{lemma}
\label{lemma:sl2}
Let $p$ be a prime that does not divide $N$.
\begin{enumerate}
\item Let $b \in B(\Z_p)$ be an element with $\ord_{p} \Delta(b) = 1$,
where $\ord_p \colon \Q_p^{*} \to \Z$ is the usual normalized valuation.
Let $v \in V(\Z_p)$ with $\pi(v) = b$. Then, the reduction mod $p$ of $v$
in $V(\F_p)$ is regular.
\item Let $b \in B(\Z_p)$ be an element with discriminant weakly divisible
by $p^2$. Then,  there exists 
$g_{b,p} \in G(\Q_p) \setminus G(\Z_p)$ such that $g_{b,p} \cdot \kappa_b \in V(\Z_p)$.
\end{enumerate}
\end{lemma}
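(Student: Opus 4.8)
The plan is to prove part (1) first and then bootstrap to part (2). For part (1), I would argue by contradiction: suppose the reduction $\bar v \in V(\F_p)$ is not regular. Since $p > N$ is large, all the integral-geometric properties of $(G,V)$ from Section \ref{subs:integral} hold over $\F_p$, and in particular the discriminant polynomial $\Delta$ reduces well and $\pi \colon V \to B$ behaves as expected mod $p$. The key point is a ``valuation jump'' estimate: if $\bar v$ fails to be regular, then one can find a one-parameter subgroup (or use the structure of the nilpotent cone / the fibre $\pi^{-1}(\bar b)$) forcing $\Delta$ to vanish to order at least $2$ along the relevant direction, i.e. $\ord_p \Delta(b) \geq 2$, contradicting $\ord_p\Delta(b) = 1$. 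More concretely, I would use that over $\F_p$ the locus of non-regular elements of a fibre $\pi^{-1}(\bar b)$ has positive codimension, and combine this with Proposition \ref{prop:H-M} together with a deformation-to-the-normal-cone / Hensel-type argument: writing $v$ in suitable coordinates adapted to an $\fsl_2$-triple through a nearby regular element, the discriminant of the restricted characteristic-polynomial-type map acquires a double zero unless $\bar v$ is already regular. This is precisely the computation implicit in \cite[Theorem 7.16]{LagaThesis}, so I would cite that for the bulk of the verification and only indicate why $\ord_p \Delta(b) = 1$ is the sharp threshold.

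For part (2), the strategy is the reduction to $\fsl_2$ advertised before the lemma. Since $p^2$ weakly divides $\Delta(b)$, there is $c \in B(\Z_p)$ with $p^2 \nmid \Delta(b + pc)$; combined with $p^2 \mid \Delta(b)$ this lets me produce, after an analytic change of parameter along the line $b + t\cdot(\text{something})$, a one-parameter family of $b$'s realizing $\ord_p$ of the discriminant equal to exactly $1$ at a ``generic'' point and $\geq 2$ at the special point — morally, $\Delta$ restricted to a transverse $\Z_p$-curve through $b$ in $B$ has a double zero of the simplest possible type. Via Theorem \ref{theo:curves}, this corresponds to the curve $C_b$ acquiring a single nodal ($A_1$) degeneration at $p$ relative to the nearby smooth members. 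Now the local picture near such a node is governed by an $\fsl_2$ (equivalently, an $A_1$-subsystem $(L, \xi)$ with $L$ adjoint of type $A_1$ and $\xi$ a stable involution): Lemma \ref{lemma:etale} lets me rigidify this $A_1$-pair over an étale neighbourhood of $p$ in $\Spec\Z_p$, identifying it with the standard $(\PGL_2, \text{standard stable involution})$, for which $\mathfrak{l}(1)$ is the space of trace-zero symmetric $2\times 2$ matrices and everything is completely explicit.

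In that explicit $A_1$-model, the element $v$ maps (essentially by restriction along a normal $\fsl_2$-triple, using the Graded Jacobson–Morozov Theorem \ref{theo:J-M} together with part (1), which guarantees $\bar v$ is regular hence sits in a unique normal $\fsl_2$-triple mod $p$) to a trace-zero symmetric matrix whose discriminant has $p$-adic valuation exactly $1$. For such a binary form — say $\begin{pmatrix} a & b \\ b & -a\end{pmatrix}$ with $p \mid (a^2+b^2)$ but $p^2 \nmid (a^2+b^2)$, or more precisely with valuation-$1$ discriminant — there is an explicit element of $\PGL_2(\Q_p)\setminus\PGL_2(\Z_p)$, namely something like $\diag(1,p)$ conjugated appropriately, that moves $v$ back into $\mathfrak{l}(1)(\Z_p)$: this is the classical statement that an integral binary quadratic form of discriminant divisible once by $p$ is $\GL_2(\Z_p)$-equivalent to one ``divisible by $p$'' after applying a single Atkin–Lehner-type operator. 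Transporting this $g_{v,p}$ back to $G(\Q_p)$ through the étale identification (which changes $G(\Z_p)$-integrality only by the admissibility of $N$, and $p > N$) gives the desired $g_{v,p} \in G(\Q_p)\setminus G(\Z_p)$ with $g_{v,p}\cdot v \in V(\Z_p)$. The element lies outside $G(\Z_p)$ precisely because its image in the $A_1$-quotient is non-integral, and nothing in the complementary directions can absorb that.

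\textbf{Main obstacle.} The hard part is not the $A_1$ linear algebra but the bookkeeping that connects it to $(G,V)$: namely showing that when $\ord_p \Delta(b)$ is exactly $2$ in the weak sense, the ``extra'' $p$ in the discriminant is concentrated in a single restricted-root $A_1$-direction transverse to the regular locus, so that the reduction to $\fsl_2$ is lossless — i.e. that $g_{v,p}$ built in the subgroup genuinely lands $v$ (and not just its $A_1$-component) back in $V(\Z_p)$. Controlling this requires part (1) crucially (to know $\bar v$ is regular, pinning down the normal $\fsl_2$-triple uniquely and hence the transverse slice), plus the fact that for $p > N$ the slice $\kappa$ and the grading are as well-behaved integrally as over a field (Theorem \ref{theo:intKostant}). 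I expect this gluing step — transferring integrality across the étale neighbourhood of Lemma \ref{lemma:etale} while keeping track of $G(\Z_p)$ versus $G(\Q_p)$ — to be where the real work lies, and it is exactly the analogue of the corresponding step in \cite[Proposition 5.4]{RTE8} and \cite[Lemma 4.19]{LagaE6}.
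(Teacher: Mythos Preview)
Your high-level strategy --- reduce to an $A_1$-situation via Lemma \ref{lemma:etale} and produce $g_{v,p}$ explicitly there --- matches the paper's, but the way you locate the $\fsl_2$ and the $\fsl_2$-computation itself are both off in ways that matter.

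\textbf{The $\fsl_2$ is a centralizer, not a triple through $v$.} You propose to find the $A_1$ via a normal $\fsl_2$-triple through $\bar v$ (invoking Theorem \ref{theo:J-M}). That does not work: $\bar v$ is not nilpotent, and graded Jacobson--Morozov applies to nilpotents. The paper instead takes the Jordan decomposition $\bar v = x_s + x_n$ and considers the centralizer $L = Z_H(x_s)$, lifted to $\Z_p$ via a Hensel argument on the decomposition $\hh = \fz_\hh(x_s) \oplus \mathrm{image}(\ad x_s)$. The derived group of $L$ is of type $A_1$ (by the spreading-out properties for $p>N$), $\theta$ restricts to a stable involution on it, and it is \emph{this} pair to which Lemma \ref{lemma:etale} is applied. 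This matters for your ``main obstacle'': once $\hh_{\Z_p} = \hh_{0,\Z_p}\oplus\hh_{1,\Z_p}$ is set up integrally and $g_{v,p}$ is chosen inside $L$, the action on $\hh_1$ is trivially integral, so the gluing you worry about is essentially free.

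\textbf{Applying part (1) inside part (2).} You invoke part (1) to say $\bar v$ is regular, but the hypothesis of part (1) is $\ord_p\Delta(b)=1$, whereas in part (2) one has $\ord_p\Delta(b)\geq 2$. The bridge you are missing is the perturbation trick: weak divisibility gives $w\in V(\Z_p)$ with $\ord_p\Delta(v+pw)=1$; apply part (1) to $v+pw$, and since $v+pw\equiv v\pmod p$, regularity of $\bar v$ follows.

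\textbf{The explicit $\fsl_2$ step.} With $\xi=\Ad(\diag(1,-1))$, the $(-1)$-eigenspace consists of off-diagonal matrices $\begin{pmatrix}0&a\\b&0\end{pmatrix}$, not trace-zero symmetric ones. In part (1) the image of $v$ has $\ord_p(ab)=1$, whence exactly one of $a,b$ reduces to zero and $x_n$ is regular. In part (2) the image of $v$ is $\begin{pmatrix}0&a\\bp^2&0\end{pmatrix}$ with $a\in 1+p\Z_p$ --- so its ``discriminant'' has valuation $\geq 2$, not exactly $1$ as you wrote. The element $g_{v,p}$ is then the image of $\diag(p,p^{-1})\in\SL_2(\Q_p)$ under a lift $\SL_2\to L^{der}$; conjugation sends the matrix to $\begin{pmatrix}0&ap^2\\b&0\end{pmatrix}\in\fsl_2(\Z_p)$, and $g_{v,p}\notin G(\Z_p)$ is clear. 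Your Atkin--Lehner picture with valuation-$1$ discriminant would instead correspond to part (1), where no such $g_{v,p}$ is being built.
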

\begin{proof}
Let $v_{\F_p} = x_s+x_n$ be the Jordan decomposition of the reduction of
$v$ in $\F_p$. Then, we have a decomposition $\hh_{\F_p} = \hh_{0,\F_p} \oplus \hh_{1,\F_p}$,
where $\hh_{0,\F_p} = \fz_{\hh}(x_s)$ and $\hh_{1,\F_p} = \text{image}(\Ad(x_s))$.
By Hensel's lemma, this decomposition lifts to $\hh_{\Z_p} = \hh_{0,\Z_p} \oplus \hh_{1,\Z_p}$,
with $\ad(v)$ acting topologically nilpotently in $\hh_{0,\Z_p}$ and
invertibly in $\hh_{1,\Z_p}$. As explained in the proof of \cite[Lemma 4.19]{LagaE6},
there is a unique closed subgroup $L \subset H_{\Z_p}$
which is smooth over $\Z_p$ with connected fibres and with Lie algebra
$\hh_{0,\Z_p}$.

For the first part of the lemma, we are free to replace $\Z_p$ for a
complete discrete valuation ring $R$ with uniformiser $p$, containing $\Z_p$
and with algebraically closed residue field $k$. In this case, the spreading out
properties in \cite[\S 7.2]{LagaThesis} guarantee that the derived group of $L$ is of type
$A_1$. Since the restriction of $\theta$ restricts to a stable involution
in $L$ by \cite[Lemma 2.5]{ThorneThesis}, Lemma \ref{lemma:etale} guarantees that there exists an 
isomorphism $\hh_{0,R}^{der} \cong \fsl_{2,R}$ intertwining the action 
of $\theta$ on $\hh_{0,R}^{der}$ with the action of $\xi = \Ad(\diag(1,-1))$ on $\fsl_{2,R}$.
To show that $v_k$ is regular is equivalent to showing that the
nilpotent part $x_n$ is regular in $\hh_{0,k}^{der}$. The elements $v_k$
and $x_n$ have the same projection in $\hh_{0,k}^{der}$, and given that
$v\in \hh_{0,R}^{der,d\theta = -1}$, its image in $\fsl_{2,R}$ is of the form
\[
\begin{pmatrix}
0 & a \\ b & 0
\end{pmatrix}.
\]
We claim that $\ord_R(ab) = 1$. This can be seen from an argument similar to
the end of \cite[Lemma 7.15]{LagaThesis}, i.e. using \cite[Lemma 2.3]{LagaThesis}\footnote{The cited result is only stated in \cite{LagaThesis} for fields of characteristic zero, but it is still valid in our situation for $k$: the only results that are invoked in that proof are those in \cite[\S 3]{Steinberg}, which hold as long as $\Char k$ is not a torsion prime for $H$, which we may assume.}, 
it follows that the discriminant of $v$ in $\hh$ coincides with the discriminant
of its image in $\fsl_2$ up to a unit in $R$, as wanted.
In particular, exactly one of $a,b$ is non-zero when reduced to $k$, and
hence $x_n$ is regular in $\hh_{0,k}^{der}$, as wanted.

For the second part, we return to the case $R = \Z_p$. If $b \in B(\Z_p)$
has discriminant weakly divisible by $p^2$, there exists $b' \in B(\Z_p)$
such that $\ord_p\Delta(b+pb') = 1$. Since the Kostant section $\kappa$
is algebraic, we know that $\kappa_b - \kappa_{b+pb'} \in pV(\Z_p)$.
By the first part of the lemma, we know that
$\kappa_{b+pb'}$ is regular mod $p$, and so $\kappa_b$ is also regular mod $p$. In particular,
writing $\kappa_{b,\F_p} = x_s + x_n$ as before,
this means that the nilpotent part $x_n$ is a regular nilpotent in $\hh_{0,\F_p}^{der}$.
We now claim that:
\begin{enumerate}[(i)]
\item We have an isomorphism $\hh_{0,\Z_p}^{der} \cong \fsl_{2,\Z_p}$;
\item The isomorphism intertwines the actions of $\theta$ and the previously
defined $\xi$;
\item Over $\F_p$, the isomorphism sends the regular nilpotent $x_n$ to
the matrix
\[
e = 
\begin{pmatrix}
0 & 1 \\ 0 & 0
\end{pmatrix}
\]
of $\fsl_{2,\F_p}$.
\end{enumerate}
We note that this does not follow immediately from 
Lemma \ref{lemma:etale}, as the isomorphism \textit{a priori} does not
need to be defined over $\Z_p$. 

We prove our claim as follows:
Consider the $\Z_p$-scheme $X = \Isom((L/Z(L),\theta),(\PGL_2,\xi))$,
consisting of isomorphisms between $L/Z(L)$ and $\PGL_2$ that
intertwine the $\theta$ and $\xi$-actions. Using Lemma \ref{lemma:etale},
we see that étale-locally, $X$ is isomorphic to $\Aut(\PGL_2,\xi)$; in
particular, it is a smooth scheme over $\Z_p$. By Hensel's lemma \cite[Théorème 18.5.17]{EGA4},
to show that $X$ has a $\Z_p$-point it is sufficient to show that it has
an $\F_p$-point.

We now consider the $\F_p$-scheme $Y = \Isom((L/Z(L)_{\F_p},\theta,x_n),(\PGL_2,\xi,e))$
of isomorphisms preserving the $\theta$ and $\xi$-actions which send $x_n$ to $e$:
it is a subscheme of $X_{\F_p}$. If $Y(\F_p)$ is non-empty, then by
Hensel's lemma it can be lifted to an isomorphism of $X(\Z_p)$ satisfying
all three points of the claim. Therefore, the claim will follow from seeing
that $Y(\F_p) \neq \emptyset$.

Again by Lemma \ref{lemma:etale},
$Y$ is étale locally of the form $\Aut(\PGL_2,\xi,e)$, since $\PGL_2^{\xi}$
acts transitively on the regular nilpotents of $\fsl_2^{d\xi = -1}$ for
any field of characteristic $p > N$. In particular, we see that $Y$ is an 
$\Aut(\PGL_2,\xi,e)$-torsor. In this situation, to see that $Y(\F_p)$ is 
non-empty it will suffice to see 
that $\Aut(\PGL_2,\xi,e) = \Spec \F_p$. This follows from the elementary
computation of the stabiliser of $e$ under $\PGL_2^\xi$, which can be seen
to be trivial over any field.

In conclusion, $Y(\F_p)$ is non-empty, meaning that there is an isomorphism
$\hh_{0,\Z_p}^{der} \cong \fsl_{2,\Z_p}$ respecting $\theta$ and $\xi$,
and we can make it so that the projection of $\kappa_b$ in $\fsl_{2,\Z_p}$ is an
element of the form
\[
\begin{pmatrix}
0 & a \\ bp^2 & 0
\end{pmatrix},
\]
with $a,b \in \Z_p$ and $a \in 1+p\Z_p$. Moreover, there exists a morphism
$\varphi \colon \SL_2 \to L^{der}_{\Q_p}$ inducing the given isomorphism
$\hh_{0,\Q_p}^{der} \cong \fsl_{2,\Q_p}$, since $\SL_2$ is simply connected. 
The morphism $\varphi$ necessarily respects the grading, and induces a
map $\SL_2(\Q_p) \to L^{der}(\Q_p)$ on the $\Q_p$-points. Consider the 
matrix $g_{b,p}=\varphi(\diag(p,p^{-1}))$: it satisfies the conditions 
of the lemma, and so we are done.
\end{proof}
\begin{obs}
A natural follow-up question to Lemma \ref{lemma:sl2} is to ask how many
$g_{b,p} \in G(\Q_p) \setminus G(\Z_p)$ are there (up to a $G(\Z_p)$-action)
such that $g_{b,p}\cdot k_b \in V(\Z_p)$. The proof of the lemma implies
that if $p^k \mid \Delta(b)$, then the projection of $\kappa_b$ in
$\fsl_{2,\Z_p}$ is of the form $\begin{pmatrix}
0 & a \\ bp^k & 0
\end{pmatrix}$, so we can conjugate by $\diag(p,p^{-1})$ a total of
$\lfloor\frac{k}{2}\rfloor$ times. It is natural to expect that all the
possible choices of $g_{b,p}$ arise in this fashion; however, we do not
know if that is true.
\end{obs}
\begin{obs}
It would be very convenient if in the proof of Lemma \ref{lemma:sl2} we
could obtain a $g \in \SL_2(\Q_p)$ such that
\[
g \begin{pmatrix} 0 & a \\ bp^2 & 0 \end{pmatrix} g^{-1} = \begin{pmatrix} 0 & ap \\ bp & 0 \end{pmatrix},
\]
in order to transform the ``mod $p^2$'' divisibility into ``mod $p$''
divisibility, but unfortunately that doesn't appear to be possible in
general. If that were the case, the element $v' \in V(\Z_p)$ corresponding to the
matrix $\begin{pmatrix} 0 & ap \\ bp & 0 \end{pmatrix}$ would not be
regular modulo $p$, and in this situation we would be able to count such
orbits using \cite{BEkedahl} (without needing geometry-of-numbers!).
We note that this strategy is used in \cite[Proof of Theorem 6.10]{RTE8}, which
works in their case because they are working over a $\Z/3\Z$-grading instead
of a $\Z/2\Z$-grading.
\end{obs}

\begin{proof}[Proof of Proposition \ref{prop:p-orbit}]
We start by proving the first item. Since $G$ has class number 1 by Proposition \ref{prop:cl1}, the 
natural map $G(\Z) \backslash G(\Z[1/m]) \to \prod_{p \mid m} G(\Z_p)\backslash G(\Q_p)$
is a bijection. In Lemma \ref{lemma:sl2}, for each prime $p \mid m$, we
constructed an element $g_{b,p} \in G(\Z_p) \backslash G(\Q_p)$, so all
these elements together correspond to some element $g_b \in G(\Z[1/m]) \setminus G(\Z)$.
By construction, $g_b \cdot \kappa_b$ belongs to $(\cap_{p \mid m} V(\Z_p)) \cap V(\Z[1/m]) = V(\Z)$.

We now prove the second item. Specifically, if $v \in W_M \cap W_0$ is
given by $g\kappa_b$ for some $g \in G(\Z[1/m])\setminus G(\Z)$, we will 
prove that $m \mid Q(v)$. It suffices to consider each prime $p \mid m$ separately,
so assume that $g \in G(\Z[1/p]) \setminus G(\Z)$. Since the group $H$ is adjoint, there exists
a $t \in T(\Q)$ that makes all the height-one coefficients of $t\kappa_b$ be equal to one,
and in this case we see that $t \in T^{\theta}(\Q)$. By Theorem \ref{theo:intKostant},
there exists a unique $\gamma \in N^{-}(\Q)$ such that $\gamma t \kappa_b = v$; 
by taking $\theta$-invariants in the isomorphisms of Theorem \ref{theo:intKostant}. 
we see that $\gamma \in N^{-,\theta}(\Q)$. Since the stabiliser is trivial,
we see that $g = \gamma t$, or in other words that $g \in P^{-,\theta}(\Z[1/p]) \setminus P^{-,\theta}(\Z)$.

Assume that $Q(v)$ is invertible in $\Z_p$, so that all the height-one coefficients
of $v$ are invertible. Then, there exists a $t' \in T(\Z_p)$ making all
the height-one coefficients of $t'v$ be equal to one, and by Theorem \ref{theo:intKostant}, there exists
at most one element $\gamma'$ in $N^{-}(\Z_p)$ such that $\gamma' t'\kappa_b = v$.
Consequently, $g \in P^{-,\theta}(\Z_p) \cap P^{-,\theta}(\Z[1/p]) = P^{-,\theta}(\Z)$, a contradiction.
In summary, we have that $p \mid Q(v)$ for all primes $p \mid m$, as wanted.
\end{proof}

\begin{ex}
Our construction is inspired by the construction in
\cite[Sections 2.2 and 3.2]{BSWsquarefree} for the case $A_n$. In that
case, $C \to B$ corresponds to the family of hyperelliptic curves $y^2 = f(x)$,
where $f(x)$ has degree $n+1$ (there is a slight difference between this
paper and \cite{BSWsquarefree}, in that we consider $f(x)$ without an $x^n$
term while they consider polynomials with a possibly non-zero linear term;
we ignore this difference for now). The main goal of \cite[Sections 2.2 and 3.2]{BSWsquarefree}
is to construct an embedding
\[
\sigma_m \colon \cW_2^{(m)} \to \frac{1}{4}W_0(\Z) \subset \frac{1}{4}V(\Z) ,
\]
where $\sigma_m(f)$ has characteristic polynomial $f$ and $Q(\sigma_m(f)) = m$.\footnote{In \cite{BSWsquarefree}, the space that we denote as $W_0$ is denoted there by $W_{00}$.}
By taking the usual pinning in $\SL_{n+1}$, we see that $V$ corresponds to
the space of matrices in $\fsl_{n+1}$ which are symmetric across the
antidiagonal, $W_0$ corresponds to the subspace of $V$ where the
entries above the superdiagonal are zero, and the height-one entries are
precisely those in the superdiagonal (in \cite{BSWsquarefree}, everything
is ``reflected vertically'', so for instance $V$ is the space of symmetric
matrices across the diagonal; this makes no difference in the results).
An explicit section of $B$ can
be taken to lie in $\frac{1}{4}W_0(\Z)$: namely, if $n$ is odd, the matrix
\[
B(b_1,\dots,b_{n+1}) = \begin{pmatrix}
0&1&&&&&&&\\
&0&\ddots&&&&&&\\
&&&1&&&&&\\
&&&0&1&&&&\\
&&&\frac{-b_2}{2}&-b_1&1&&&\\
&&\iddots&-b_3&\frac{-b_2}{2}&0&1&&\\
&\frac{-b_{n-2}}{2}&\iddots&\iddots&&&&\ddots&\\
\frac{-b_{n}}{2}&-b_{n-1}&\frac{-b_{n-2}}{2}&&&&&0&1\\
-b_{n+1}&\frac{-b_{n}}{2}&&&&&&&0
\end{pmatrix}
\]
can be seen to have characteristic polynomial $f(x) = x^{n+1} + b_1x^n +\dots+b_nx +b_{n+1}$;
if $n$ is even, a similar matrix can be given.
The main observation in this case is that if $m^2$ weakly divides $\Delta(f)$,
then there exists an $l\in \Z$ such that $f(x+l) = x^{n+1} + p_1x^n +\dots+mp_nx +m^2p_{n+1}$
(cf. \cite[Proposition 2.2]{BSWsquarefree}). Then, if $D = \diag(m,1,\dots,1,m^{-1})$,
we observe that the matrix
\[ 
D(B(p_1,\dots,p_{n-1},mp_n,m^2p_{n+1})+lI_{n+1})D^{-1}
\]
is integral, has characteristic polynomial $f(x)$ and the entries in the 
superdiagonal are $(m,1,\dots,1,m)$. Thus, this matrix has $Q$-invariant $m$, as desired.
\end{ex}
\begin{obs}
Our $Q$-invariant is slightly different to the $Q$-invariant defined in
\cite{BSWsquarefree}, which is defined in a slightly more general subspace of $V$. When
restricting to $W_0(\Q)$, their $Q$-invariant turns out to be a product
of powers of the elements of the superdiagonal, whereas in our case we
simply take the product of these elements. This difference does not affect 
the proof of Theorem \ref{theorem:tail}, and we can also see that for both 
definitions the $Q$-invariant in the previous example is $m$.
\end{obs}

\section{Reduction theory}

In light of the results in Section \ref{section:construct}, 
to bound families of curves with non-squarefree discriminant it
is sufficient to estimate the size of the $G(\Z)$-invariant set $W_M$.
Before we are able to obtain such an estimate, we will need to obtain
a precise count of the number of reducible $G(\Z)$-orbits in $V(\Z)$.
To do so, we will first need some results 
about reduction theory: most importantly, we will construct a box-shaped
domain for the action of $G(\Z)$ on $G(\R)$, in the style of \cite[\S 2.2]{SSSVcusp}.

\subsection{Heights}

Recall that $B = \Spec \Z[p_{d_1},\dots,p_{d_r}]$. For any $b \in B(\R)$,
we define the \emph{height} of $b$ to be
\[
\text{ht}(b) = \sup_{i=1,\dots,r} |p_{d_i}(b)|^{1/d_i}.
\]
Similarly, for every $v \in V(\R)$ we define $\text{ht}(v) := \text{ht}(\pi(v))$.
We record the following fact from \cite[Lemma 8.1]{LagaThesis}, which in
particular means that the number of elements of $B(\Z)_{<X} := \{b \in B(\Z) \mid \Ht(b) < X\}$ 
is of order $X^{\dim V}$:
\begin{lemma}
We have $d_1+\dots+d_r = \dim_{\Q}V$.
\end{lemma}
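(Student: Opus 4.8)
The plan is to prove $d_1 + \cdots + d_r = \dim V$ by showing that both sides equal $r + |\Phi_H^+|$, the rank plus the number of positive roots of $H$.

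For the left-hand side: by the theorem of Panyushev quoted above, a Cartan subspace $\fc \subset V$ is at the same time a Cartan subalgebra of $\hh$, and the linear inclusion $\fc \hookrightarrow V$ induces an isomorphism $\fc \git W_\fc \xrightarrow{\sim} V \git G = B$. Being induced by a linear map, this is an isomorphism of \emph{graded} rings $\Q[\fc]^{W_\fc} \cong \Q[V]^G = \Q[p_{d_1},\dots,p_{d_r}]$, where the grading on the right is the one (used in Theorem \ref{theo:curves}) in which $p_{d_i}$ has degree $d_i$. Moreover $W_\fc = N_H(\fc)/Z_H(\fc)$ is the full Weyl group $W$ acting on $\fc$ by its usual reflection representation, since $Z_H(\fc)$ is the maximal torus with Lie algebra $\fc$ and $N_H(\fc)$ normalizes it. Hence $\{d_1,\dots,d_r\}$ is the multiset of fundamental degrees of $W$, and the classical identity ``sum of degrees $=$ rank $+$ number of reflections'' gives $\sum_i d_i = r + |\Phi_H^+|$.

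For the right-hand side I would show $\dim V = r + |\Phi_H^+|$, equivalently $\dim\hh(1) = \dim\hh(0) + r$. Choose $v \in \fc$ with $\Delta(v) \neq 0$; by Proposition \ref{prop:H-M} it is regular semisimple, so $\fz_\hh(v)$ is the unique Cartan subalgebra containing $v$ and therefore $\fz_\hh(v) = \fc$. As $v$ is semisimple, $\hh = \fz_\hh(v) \oplus [\hh, v]$, and both summands are stable under $\theta$ because $d\theta(v) = -v$. Intersecting with $\hh(1)$ and using $\fz_\hh(v) = \fc \subset \hh(1)$ together with $[\hh,v] \cap \hh(1) = [\hh(0), v]$ (immediate from $[\hh(0),v] \subseteq \hh(1)$ and $[\hh(1),v] \subseteq \hh(0)$), one gets
\[
\hh(1) = \fc \oplus [\hh(0), v].
\]
Since $\fz_{\hh(0)}(v) = \fc \cap \hh(0) = 0$, the map $\ad(v) \colon \hh(0) \to [\hh(0),v]$ is an isomorphism, so $\dim V = \dim\hh(1) = r + \dim\hh(0)$. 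Combining with $\dim\hh = \dim\hh(0) + \dim\hh(1)$ yields $\dim\hh = 2\dim V - r$, hence $|\Phi_H^+| = (\dim\hh - r)/2 = \dim V - r$, i.e. $\dim V = r + |\Phi_H^+|$, which matches the left-hand side.

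The one point requiring care is the first paragraph — pinning down that the degrees $d_i$ of the coordinates on $B$ really are the Weyl-group degrees, i.e. that the grading on $B$ in Theorem \ref{theo:curves} is the one coming from the scaling action on $V$ and that $W_\fc$ carries its standard reflection action. Both are routine consequences of the constructions in Section \ref{subs:Transverse} and of Panyushev's theorem. If one prefers to avoid even this, the identity can be checked directly from Tables \ref{table:curves} and \ref{table:explicit}: in each of the seven types the subscripts occurring among the $p$'s are exactly the numbers $m_i+1$ for the exponents $m_i$ of the corresponding Dynkin diagram, and one verifies case by case that their sum is the listed $\dim V$.
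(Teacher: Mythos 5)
Your proof is correct. The paper does not actually give an argument of its own here—it just cites \cite[Lemma 8.1]{LagaThesis}—so there is nothing internal to compare against, but your route is the standard one: identify the degrees $d_i$ with the fundamental degrees of the Weyl group acting on the Cartan subalgebra $\fc$ (via Panyushev's isomorphism $\Q[\fc]^{W_\fc}\cong\Q[V]^G$ and the Chevalley--Shephard--Todd relation $\sum(d_i-1)=|\Phi_H^+|$), and separately compute $\dim\hh(1)=r+|\Phi_H^+|$ by decomposing $\hh=\fc\oplus[\hh,v]$ for a regular semisimple $v\in\fc$ and observing that $\ad(v)\colon\hh(0)\to[\hh(0),v]$ is injective. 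Both halves are rigorously argued, and your closing remark that the identity can also be verified directly from Tables \ref{table:curves} and \ref{table:explicit} is a sound sanity check.
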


\subsection{Measures on $G$}
\label{subs:measures}

Let $\Phi_G = \Phi(G,T^{\theta})$ be the set of roots of $G$. The Borel
subgroup $P^{\theta}$ of $G$ determines a root basis $S_G$ and a set of
positive/negative roots $\Phi_G^{\pm}$, compatible with the choice of positive
roots in $H$ determined by the pinning of Section \ref{subs:Vinberg}. 
Let $\ol{N}$ be the unipotent 
radical of the negative Borel subgroup $P^{-,\theta}$. Then, there exists
a maximal compact subgroup $K \subset G(\R)$ such that
\[
\ol{N}(\R) \times T^{\theta}(\R)^{\circ} \times K \to G(\R)
\]
given by $(n,t,k) \mapsto ntk$ is a diffeomorphism; see \cite[Chapter 3, \S 1]{LangSL2}.
We can choose $K$ to be ``compatible'' with $T$; that is, we can choose
a Cartan involution $\tau$ such that the fixed points of $G$ with respect
to $\tau$ is exactly $K$, and satisfying that $\tau|_T$ is just the inversion map.
The following result is a well-known property of the Iwasawa decomposition:
\begin{lemma}
\label{lemma:IwasawaInt}
Let $dn,dt,dk$ be Haar measures on $\overline{N}(\R),T^{\theta}(\R)^{\circ},K$,
respectively. Then, the assignment
\[
f \mapsto \int_{n \in \overline{N}(\R)}\int_{t \in T^{\theta}(\R)^{\circ}}\int_{k \in K} f(ntk) \delta(t)^{-1} dn\, dt\, dk
\]
defines a Haar measure on $G(\R)$. Here, $\delta(t) = \prod_{\beta \in \Phi_G^{-}} \beta(t) = \det \Ad(t)|_{\Lie \overline{N}(\R)}$.
\end{lemma}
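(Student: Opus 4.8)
The plan is to reduce the statement to the standard fact that on a Lie group admitting an Iwasawa-type decomposition $G(\R) = \overline{N}(\R)\, T^{\theta}(\R)^{\circ}\, K$ with the multiplication map a diffeomorphism, a left Haar measure can be written as a product of the Haar measures on the three factors twisted by the appropriate modular character. Concretely, I would first fix the diffeomorphism $(n,t,k)\mapsto ntk$ supplied by the cited Iwasawa decomposition \cite[Chapter~3, \S1]{LangSL2}, and pull back a left Haar measure $dg$ on $G(\R)$ along it. Since $\overline{N}(\R)$, $T^{\theta}(\R)^{\circ}$ and $K$ are unimodular (the first two being, respectively, a unipotent group and a connected split torus over $\R$, and $K$ compact), the pulled-back measure must be of the form $\chi(n,t,k)\,dn\,dt\,dk$ for a smooth positive density $\chi$; the task is to identify $\chi$ as $\delta_G(t)^{-1}$.

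The key computation is to test left-invariance of the proposed functional
\[
I(f) = \int_{\overline{N}(\R)}\int_{T^{\theta}(\R)^{\circ}}\int_{K} f(ntk)\,\delta_G(t)^{-1}\, dn\, dt\, dk
\]
under left translation by elements of each of the three subgroups separately, since these generate $G(\R)$. Left-invariance under $K$ is immediate after absorbing $k\mapsto k_0 k$ into the Haar measure $dk$ and leaves $n,t$ untouched. Left-invariance under $T^{\theta}(\R)^{\circ}$: translating by $t_0$ gives $t_0 n t k = (t_0 n t_0^{-1})(t_0 t) k$, and $t_0 n t_0^{-1}$ runs over $\overline{N}(\R)$ with Jacobian $\det\Ad(t_0)|_{\Lie\overline{N}(\R)} = \delta_G(t_0)$, which is exactly cancelled by the change $\delta_G(t_0 t)^{-1} = \delta_G(t_0)^{-1}\delta_G(t)^{-1}$ together with translation-invariance of $dt$; so the two factors of $\delta_G(t_0)^{\pm1}$ cancel. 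Left-invariance under $\overline{N}(\R)$: translating by $n_0$ sends $n\mapsto n_0 n$, which is just left-translation in the unimodular group $\overline{N}(\R)$, leaving $t,k$ and the weight $\delta_G(t)^{-1}$ alone. Since the three subgroups generate $G(\R)$, $I$ is left-invariant, hence a (nonzero, positive) left Haar measure.

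I do not anticipate a serious obstacle here; this is a routine verification. The only point requiring a little care is bookkeeping the direction of the modular character — i.e.\ making sure the twist is $\delta_G(t)^{-1}$ and not $\delta_G(t)$ — which is pinned down by the fact that conjugation $n\mapsto t_0 n t_0^{-1}$ on $\overline{N}(\R)$ (built from \emph{negative} roots) scales the measure $dn$ by $\prod_{\beta\in\Phi_G^-}\beta(t_0) = \delta_G(t_0)$, so the compensating factor in the integrand must carry the opposite sign. One should also note that $\delta_G$ as defined is well-defined and smooth on $T^\theta(\R)^\circ$ (it is a product of algebraic characters evaluated on the identity component, hence positive), so $\delta_G(t)^{-1}$ is a legitimate smooth weight; and that the resulting measure is independent, up to scaling, of the choices of $dn$, $dt$, $dk$, which is all that is needed for the applications in Section~\ref{section:Counting}.
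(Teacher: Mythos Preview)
The paper does not give a proof of this lemma; it simply introduces it as ``a well-known property of the Iwasawa decomposition'' and states it without argument. So there is nothing to compare your approach against, and a clean direct verification such as the one you outline would be a welcome addition.

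That said, there is a genuine slip in your verification. Your arguments for left-invariance under $\overline{N}(\R)$ and under $T^{\theta}(\R)^{\circ}$ are correct. The claim for $K$, however, is not: left-translating by $k_0\in K$ sends $ntk$ to $k_0 n t k$, which is \emph{not} of the form $n\,t\,(k_0 k)$, so the substitution $k\mapsto k_0 k$ does not arise. What is immediate from the shape of $I$ is \emph{right}-$K$-invariance, not left-$K$-invariance, and since $\overline{N}(\R)\,T^{\theta}(\R)^{\circ}$ and $K$ do not commute, right-$K$-invariance together with left-$\overline{N}T^{\theta}$-invariance does not formally give left-$G(\R)$-invariance.

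The fix is short. From your two correct checks, $I$ is left-invariant under $P:=\overline{N}(\R)\,T^{\theta}(\R)^{\circ}$, and it is visibly right-$K$-invariant. Since $P$ acts simply transitively on $G(\R)/K$ by the Iwasawa diffeomorphism, the induced functional on right-$K$-invariant functions is the (unique up to scalar) left-$P$-invariant measure on $G(\R)/K$; but the push-forward of a left Haar measure $dg$ to $G(\R)/K$ is also left-$P$-invariant, so the two agree up to a positive scalar. For general $f$ one then averages: with $\bar f(g)=\int_K f(gk)\,dk$ one has $I(f)=I(\bar f)$ and $\int_{G(\R)} f\,dg=\int_{G(\R)}\bar f\,dg$ (the latter because $K$ is compact, so the modular function of $G(\R)$ is trivial on $K$), whence $I(f)=c\int_{G(\R)} f\,dg$ for all $f$. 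Alternatively, you can cite this integration formula directly from a standard reference (e.g.\ Knapp, \emph{Lie Groups Beyond an Introduction}, Prop.~8.46), which is in the spirit of how the paper treats it.
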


We get the measure on $T^{\theta}(\R)^{\circ}$
by pulling it back from the isomorphism $\prod_{\beta \in S_G} \beta \colon T^{\theta}(\R)^{\circ} \to \R_{>0}^{\# S_G}$,
where $\R_{>0}$ is given the standard Haar measure $d^{\times}\lambda = d\lambda/\lambda$.
We will choose the normalizations for $dn$ and $dk$ in Section \ref{subs:averaging}
in a way that will be convenient for us.

\subsection{Fundamental domains}
\label{subs:FDomains}

In this section, we construct a fundamental domain for the action of
$G(\Z)$ on $G(\R)$. In view of \cite{SSSVcusp}, it will be useful to construct
a ``box-shaped'' fundamental domain $\cF$, which we will now define.
For any $c > 0$, define $T_c = \{t \in T^{\theta}(\R)^{\circ} \mid \forall \alpha \in S_G,\;\alpha(t) \geq c\}$.
We define a \emph{Siegel set} to be a set of the form $\cS = \omega \cdot T_c \cdot K$,
where $\omega \subset \ol{N}(\R)$ is a compact subset, $c$ is a positive
real constant and $K$ is the maximal compact subset fixed in Section \ref{subs:measures}.
Then, we say that a fundamental domain $\cF$ for the action of $G(\Z)$
on $G(\R)$ is \emph{box-shaped at infinity} if there exist two Siegel sets
$\cS_1 \subset \cF \subset \cS_2$ satisfying that:
\begin{enumerate}
\item There exists an open subset $\cU_1 \subset \cS_1$ of full measure such that every 
$G(\Z)$-orbit in $G(\R)$ intersects $\cU_1$ at most once.
\item Every $G(\Z)$-orbit in $G(\R)$ intersects $\cS_2$ at least once.
\item For sufficiently large $c$, we have $\cS_1 \cap \ol{N}T_cK = \cS_2 \cap \ol{N}T_cK$.
\end{enumerate}
To construct $\cF$, we will see that it is sufficient to construct $\cS_1$
and $\cS_2$. More precisely, we have as in \cite[Lemma 7]{SSSVcusp}:
\begin{lemma}
\label{lemma:box}
Let $\Lambda$ be a discrete subgroup of a Lie group $G$ and denote by
$B(G)$ the Borel $\sigma$-algebra of $G$. Assume there exist sets $\cS_1,\cS_2$
in $B(G)$ such that the natural maps $\cS_1 \to G/\Lambda$ and $\cS_2 \to G/\Lambda$
are injective and surjective, respectively. Then, there exists a set $\cF$
in $B(G)$ which is a fundamental domain for the action of $\Lambda$ on
$G$ satisfying $\cS_1 \subset \cF \subset \cS_2$.
\end{lemma}
We will construct $\cS_1$ and $\cS_2$ in the following subsections.
\begin{obs}
The constructed $\cS_1$ and $\cS_2$ will not strictly be Siegel sets of
the form $\omega T_c K$, but rather of the form $\omega T_c K'$ for some
subset $K'$ of $K$. We will call them Siegel sets regardless.
\end{obs}

\subsubsection{Constructing $\cS_1$}
\label{subs:corners}

To obtain the domain $\cS_1$, we will use general properties of the
Borel-Serre compactification following \cite{BScorners}.
The construction below holds for a general connected semisimple
algebraic group $G$ over $\Q$, unless otherwise specified (note that our group $G$ is always semisimple
by \cite[Proposition 3.7]{LagaThesis}).

Consider the symmetric space $X = G(\R)/K$, where $K$ is a maximal compact
subgroup of $G(\R)$. For each parabolic $\Q$-subgroup $P$ of $G$, let
$S_P := (R_dP/(R_uP \cdot R_dG))$, where $R_u$ denotes the unipotent radical
and $R_d$ denotes the $\Q$-split part. Then, $S_P$ is a $\Q$-split torus,
and we let $A_P := S_P(\R)^{\circ}$. There is a natural action of $A_P$ on $X$
called the geodesic action (see \cite[(3.2)]{BScorners}). Set $e(P) = A_P \backslash X$,
and consider
\[
\overline{X} = \coprod_{P \text{ parabolic}} e(P),
\]
which by \cite[(7.1)]{BScorners} naturally has a structure of a manifold
with corners. The topology of $\overline{X}$ is studied in \cite[\S 5, \S 6]{BScorners};
in particular, it is shown that for any parabolic group $P$, the subset
$X(P) = \coprod_{Q \supset P} e(Q)$ is an open subset of $\overline{X}$.
Taking $P = G$, we see that $e(G) = X$ is an open submanifold of $\overline{X}$.

Assume for simplicity that $G$ is split over $\Q$ with split maximal torus $T$.
Let $P = \ol{N}T$ be a Borel subgroup of $G$. For $x \in X$ and a real constant $c > 0$, we can consider the set
\[
U_{x,P,c} = \ol{N}(\R) (T_c \cdot x).
\]
Its closure $\overline{U_{x,P,c}}$ in $\overline{X}$ is a neighbourhood
of the closure of $e(P)$ in $\overline{X}$. Then, we have the following
result (see \cite[Proposition 10.3]{BScorners}):
\begin{prop}
\label{prop:corners}
There exists $c > 0$ satisfying that for any $g_1, g_2 \in \overline{U_{x,P,c}}$, if there
exists $\gamma \in G(\Z)$ such that $g_1 = \gamma g_2$, then $\gamma \in P(\Z)$.
\end{prop}

To obtain a suitable Siegel set $\cS_1$, we need to carefully choose a
compact subset $\omega \subset \ol{N}(\R)$. Let $(\alpha_1,\dots,\alpha_k)$
be an ordering of the positive roots of $G$ satisfying that $\Ht(\alpha_i) \leq \Ht(\alpha_{i+1})$
for all $1 \leq i \leq k-1$. For each root $\alpha_i$ we consider
the isomorphism $u_{\alpha_i} \colon \G_a \to U_{\alpha_i}$, where $U_{\alpha_i} \subset \ol{N}$.
By \cite[Theorem 5.1.13]{ConradRed}, there is an isomorphism of 
varieties over $\Z$:
\[
\prod_{i=1}^k U_{\alpha_i} \to \ol{N}
\]
which is just the multiplication map. In other words, we can express any
element of $\ol{N}(\R)$ as $u_{\alpha_1}(x_1)\cdots u_{\alpha_k}(x_k)$ for some $x_1,\dots,x_k \in \R$.
Moreover, a set of $x_1,\dots,x_k$ will correspond to an element of $\ol{N}(\Z)$
if and only if $x_1,\dots,x_k \in \Z$. We now recall the following
result (see e.g. \cite[Proposition 5.1.14]{ConradRed}):
\begin{lemma}
\label{lemma:commute}
Let $x,y \in \R$, and let $\alpha,\beta$ be positive roots. Then,
\[
u_{\alpha}(x)u_{\beta}(y)u_{\alpha}(-x)u_{\beta}(-y) = \prod_{i,j > 0} u_{i\alpha + j\beta} (c_{i,\alpha,j,\beta}x^iy^j).
\]
Here $c_{i,\alpha,j,\beta}$ is a constant, and the product is taken over all $i,j > 0$ such that $i\alpha+j\beta$
is a positive root.
\end{lemma}

Consider the set $\ol{\omega} = \{u_{\alpha_1}(x_1)\cdots u_{\alpha_k}(x_k) \in \ol{N}(\R) \mid x_i \in [-1/2,1/2] \, \forall i\} \subset \ol{N}(\R)$.

\begin{prop}
We have that
\begin{enumerate}
\item $\ol{N}(\Z) \ol{\omega} = \ol{N}(\R)$.
\item Except for a set of zero measure, no two distinct elements of $\ol{\omega}$ are
$\ol{N}(\Z)$-translates of each other.
\end{enumerate}
\end{prop}
\begin{proof}
For the first point, let $y_1,\dots,y_k \in \R$. We will show that there exist $n_1,\dots,n_k \in \Z$
and $x_1,\dots,x_k \in \R$ such that
\begin{equation}
\label{eq:ualpha}
u_{\alpha_1}(n_1)\cdots u_{\alpha_k}(n_k) u_{\alpha_1}(x_1)\cdots u_{\alpha_k}(x_k) = u_{\alpha_1}(y_1)\cdots u_{\alpha_k}(y_k).
\end{equation}
Using the commutator relations of Lemma \ref{lemma:commute}, we can reorder
the terms in the left hand side to get equations of the form
\begin{equation}
\label{eq:roots}
y_m = n_m + x_m + p_m(n_1,\dots,n_k,x_1,\dots,x_k),
\end{equation}
where $p_m$ are polynomials. By examining the commutator relations,
we see that $p_m$ only depends on the variables corresponding to lower
height coefficients. In particular, if $\alpha_m$ is a height-one root,
we can choose $n_m \in \Z$ and $x_m \in [-1/2,1/2]$ such that $y_m = n_m + x_m$.
We can then find coefficients $n_m,x_m$ for the larger height roots inductively
using \eqref{eq:roots}.

For the second point, choose two elements of $\ol{\omega}$ with coefficients
$x_1,\dots,x_k$ and $y_1,\dots,y_k$ lying in $(-1/2,1/2)$. Assume there
exist $n_1,\dots,n_k \in \Z$ satisfying \eqref{eq:ualpha}. By induction,
we will show that $n_i = 0$ for all $i$. This is clear for the height-one
coefficients, since $n_i + x_i = y_i$. Assume by induction that all the
coefficients $n_i$ are zero up to some height $h$. We note that by Lemma \ref{lemma:commute}
all terms in the polynomial $p_m(n_1,\dots,n_k,x_1,\dots,x_k)$ are multiple
of at least one $n_i$ of lower height.
Hence, by induction we get that $p_m(n_1,\dots,n_k,x_1,\dots,x_k) = 0$
and thus that $n_m = 0$, as wanted.
\end{proof}

Assume from now on that our group $G$ is one of the groups constructed
in Section \ref{subs:Vinberg}.
We note that $T^{\theta}(\Z)$ acts by conjugation on $\ol{\omega}$: recall that $\Ad(t) \cdot u_{\alpha}(x) = u_{\alpha}(\alpha(t)\cdot x)$,
and for any $t \in T^{\theta}(\Z)$ we have that $\alpha(t) = \pm 1$.
Alternatively, we can say that there is a mapping
$T^{\theta}(\Z) \to \{\pm 1\}^{\# S_G}$ given by $t \mapsto (\alpha(t))_{\alpha \in S_G}$;
however, it needs not be surjective: denote by $A = \{a_1,\dots,a_l\}$
a set of representatives of the cokernel of this map. For any element
$a_i \in A$, write it as $a_i = (a_{i,1},\dots,a_{i,k})$, where $a_{i,j} = \pm 1$
correspondingly. Consider the set $\omega_i$ inside $\ol{\omega}$ consisting
of those elements $u = u_{\alpha_1}(x_1)\cdots u_{\alpha_k}(x_k)$ such that
for all height-one coefficients $\alpha_j$, we have $x_j \in a_{i,j} \cdot [0,1/2]$.
Finally, define $\omega$ to be the union of the sets $\omega_i$.
Then, each element in $\ol{\omega}$ is conjugate to a unique element in
$\omega$.

Additionally, we note that $T^{\theta}(\Z) \subset K$, since $T^{\theta}(\Z)$ is fixed by 
the Cartan involution $\tau$ chosen in Section \ref{subs:measures},
and $\tau|_{T^{\theta}}$ is just the inverse map.

Take $\cS_1 = \omega T_c \ol{K}$, where $c > 0$ satisfies the conclusions of
Proposition \ref{prop:corners}, and $\ol{K}$ is a fundamental set for 
the action of $Z(G)(\Z)$ on $K$. Let $g_1 = n_1t_1k_1$ and $g_2 = n_2t_2k_2$
be two elements of $\cS_1$, and moreover we assume that $n_1$ and $n_2$
lie in the interior of $\omega$ (this interior is a set of full measure).
Assume that $g_1$ and $g_2$ are equivalent under the $G(\Z)$-action. By
Proposition \ref{prop:corners}, it follows that $g_1$ and $g_2$ have to
be $P^{\theta}(\Z)$-conjugate, say by an element $p_0 = n_0 t_0$ for $n_0 \in \ol{N}(\Z)$
and $t_0 \in T^{\theta}(\Z)$. Then, we can write
\[
n_0 (t_0 n_1 t_0^{-1}) t_1 (t_0k_1) = n_2t_2k_2.
\]
By uniqueness in the Iwasawa decomposition, we have that $n_0 (t_0 n_1 t_0^{-1}) = n_2$,
$t_1 = t_2$ and $t_0 k_1 = k_2$. If we look at the first equation in terms
of height-one roots $\alpha_i$, we get equalities of the form 
$u_{\alpha_i}(x_0)u_{\alpha_i}(\alpha_i(t_0)x_1) = u_{\alpha_i}(x_2)$, where
$x_0 \in \Z$ and $x_1,x_2 \in [-1/2,1/2]$ (or a subinterval if appropriate).
This can only happen if $x_0 = 0$ for all coefficients, meaning that $n_0 = 1$,
and also by construction of $\omega$ it must also happen that $\alpha_i(t_0) = 1$
for all $i$, or in other words that $t_0 \in Z(G)(\Z)$. Then, the last equation
$t_0 k_1 = k_2$ can only happen if $t_0 = 1$ by construction. Therefore, $g_1 = g_2$
as wanted.

\subsubsection{Constructing $\cS_2$}
We can construct $\cS_2$ compatibly with $\cS_1$ thanks to the following proposition:

\begin{prop}
There exists a real constant $c > 0$ such that $G(\R) = G(\Z) \omega T_c \ol{K}$,
where $\omega$ and $\ol{K}$ are as in Section \ref{subs:corners}.
\end{prop}
\begin{proof}
We can show that $G(\R) = G(\Z) \omega' T_c K$ for some compact subset
$\omega' \subset N(\R)$ and some $c > 0$ using \cite[Theorem 4.15]{PlatonovRapinchuk}, the first statement is reduced to showing
that $G(\Q) = P^{\theta}(\Q)G(\Z)$, which follows from \cite[\S 6, Lemma 1(b)]{Borel66}.

It is clear that $K$ can be substituted by $\ol{K}$, since we can multiply
by an appropriate element of $Z(G)(\Z)$ in $G(\Z)$. Now, let $g = g_0 n t k$
be an element of $G(\R) = G(\Z) \omega' T_c \ol{K}$: we will show that
$g \in G(\Z) \omega T_c \ol{K}$. We know that there exists $n_0 \in N(\Z)$
and $t_0 \in T^{\theta}(\Z)$ such that $t_0 n_0 n t_0^{-1} \in \omega$. Let $z \in Z(G)(\Z)$
be such that $zt_0k \in \ol{K}$. Then, $g = (z^{-1}g_0n_0^{-1}t_0^{-1})(t_0n_0nt_0^{-1})t(zt_0k) \in G(\Z) \omega T_c \ol{K}$,
as wanted.
\end{proof}

We fix $\cS_2 = \omega T_c \ol{K}$, for some $c > 0$ satisfying the 
above proposition. It is clear
then that $\cS_1$ and $\cS_2$ satisfy the required properties, and hence
that by Lemma \ref{lemma:box} we obtain a box-shaped fundamental domain $\cF$ for
the action of $G(\Z)$ over $G(\R)$.

\section{Counting reducible orbits}
\label{section:Counting}

In light of the results of Section \ref{section:construct}, to estimate
the elements of $B(\Z)$ having discriminant divisible by the square of a
large prime, it suffices to count certain special reducible $G(\Z)$-orbits
in $V(\Z)$. In this section, we develop much of what we will need in this
regard, following Bhargava's geometry-of-numbers techinques, and in particular
using the ideas in \cite{SSSVcusp}.

\subsection{Averaging}
\label{subs:averaging}

Let $S \subset V(\Z)$ be a $G(\Z)$-invariant subset. Define
\[
N(S,X) = \sum_{\substack{v \in G(\Z) \backslash S \\ \Ht(v) < X}} \frac{1}{\# \Stab_G(v)(\Z)}.
\]
We will prove the following:
\begin{theorem}
\label{theo:reducible}
There exist real positive constants $C,\delta$ such that
\[
N(V(\Z)^{red},X) = C X^{\dim V} + O(X^{\dim V - \delta}).
\]
\end{theorem}
By analogous arguments to \cite[\S 2.9]{ThorneE6}, there exist open subsets $L_1,\dots,L_k$
covering $\{b \in B(\R) \mid \Ht(b) = 1,\, \Delta(b) \neq 0\}$ such that
for a fixed $i$, the quantity $r_i = \# \Stab_{G(\R)}(v)$ remains constant for 
any choice of $v \in \pi^{-1}(L_i)$. We will denote $\Lambda = \R_{> 0}$
and $V_i := V(\Z)^{red} \cap G(\R) \kappa(\Lambda L_i)$.
Fix a compact left and right $K$-invariant set $G_0 \subset G(\R)$ which is the
closure of a non-empty open set, for which we assume that $G_0 = G_0^{-1}$.
An averaging argument just as in \cite[\S 2.3]{BSquartics} yields
\begin{equation}
\label{eq:averaging}
N(V_i,X) = \frac{1}{r_i \vol(G_0)} \int_{g \in \cF} \# \{v \in V(\Z)^{red} \cap (g G_0 \kappa(\Lambda L_i))_{< X} \} dg. 
\end{equation}
To obtain the estimate for $N(V(\Z)^{red},X)$, it will suffice to obtain
the appropriate estimates for $N(V_i,X)$. For any subset $S$ inside
$V(\Z)^{red} \cap G(\R) \kappa(\Lambda L_i)$, we can use the expression
\eqref{eq:averaging} to \emph{define} $N(S,X)$ as
\[
N(S,X) = \frac{1}{r_i \vol(G_0)} \int_{g \in \cF} \# \{v \in S \cap (g G_0 \kappa(\Lambda L_i))_{< X} \} dg. 
\]

For the argument, it will be crucial to use Davenport's lemma (see \cite{Davenport}), 
as stated in \cite[Proposition 2.6]{BSquartics}. We record it here for
convenience.
\begin{prop}
\label{prop:Davenport}
Let $\cR$ be a bounded, semialgebraic multiset in $\R^n$ having maximum
multiplicity $m$ and that is defined by at most $k$ polynomial inequalities,
each having degree at most $l$. Then,
\[
\# (\cR \cap \Z^n) = \vol(\cR) + O(\max(\{\vol(\overline{\cR}),1\})),
\]
where $\vol(\overline{\cR})$ denotes the greatest $d$-dimensional volume of
any projection of $\cR$ onto a coordinate subspace obtained by equating
$n-d$ coordinates to zero, and where $d$ takes any value between $1$ and
$n-1$. The implied constant in the second summand depends only on $n,m,k$
and $l$.
\end{prop}

\subsection{Applying the Selberg sieve}
\label{subs:Selberg}
Another important step in our argument will be the use of the Selberg sieve.
Notably, in the statement of Theorem \ref{theo:reducible} we require a
power saving estimate in the error term, which we will obtain by applying
the Selberg sieve as in \cite{STSelberg}. In this section, we describe
exactly how the Selberg sieve is used, and which hypothesis are needed.

The general situation is the following: suppose we have a finite sequence of
non-negative numbers $\cA = (a_n)_n$, and let $P$ be a finite
product of distinct primes. For all $d |P$, assume the following holds:
\begin{equation}
\label{eq:SelbergHyp}
\sum_{n \equiv 0 \bmod{d}} a_n = g(d)X + r_d,
\end{equation}
where $X > 0$ and $g(d)$ is a multiplicative function satisfying $0 < g(p) < 1$
for all primes $p|P$. Define the multiplicative function $h$ by $h(p) = \frac{g(p)}{1-g(p)}$
at primes $p$. For some choice of $D_0 > 1$, write
\[
H = \sum_{\substack{d < \sqrt{D_0}\\d|P}} h(d).
\]
Then, \cite[Theorem 6.4]{ANTKowalski} says that
\begin{equation}
\label{eq:Selberg}
\sum_{(n,P) = 1} a_n \leq XH^{-1} + O\lp \sum_{\substack{d \leq D_0\\d|P}} \tau_3(d) r_d \rp.
\end{equation}
We now explain how to apply \eqref{eq:Selberg} in our context of orbit-counting.
We will typically work in a subset $W \subset V(\Z)$ (e.g. the main body, the 
cusp...), and we will suppose we have a set $S \subset W$ which satisfies $S = \cap_p S_p$,
where for each prime $p$, the set $S_p$ is defined by congruence conditions
modulo $p$. We wish to estimate $N(S,X)$, which will generally be some 
orbit-counting function of $S$ inside $W$ (to be made precise in future applications).
Let $T_p$ be the complement of $S_p$ in $W$, and fix a number
$z < X$. Let $P(z) = \prod_{p < z} p$, and for a number $d | P(z)$ set
\[
a_d = N\lp\bigcap_{p|d}T_p \bigcap_{p | \frac{P(z)}{d}} S_p,X\rp.
\]
If $d \nmid P(z)$, set $a_d = 0$.
To apply the Selberg sieve, we need an estimate like \eqref{eq:SelbergHyp}.
Let $L$ be a translate of $mW$, for some squarefree $m \in \Z$, and
assume that we have an estimate of the form
\begin{equation}
\label{eq:powerSave}
N(L,X) = km^{-A}X^B + O(m^{-A+C}X^{B-D}),
\end{equation}
for some non-negative constants $A,B,C,D$ and $k$. Then, it follows that
\[
\sum_{n \equiv 0 \bmod{d}} a_n = N(\cap_{p \mid d} T_p,X) = kg_d X^B + r_d,
\]
where for a prime $p$, the quantity $g_p$ is the density of $T_p$,
for $d$ squarefree we set $g_d = \prod_{p \mid d} g_p$, and we have $r_d = O(d^Cg_dX^{B-D})$.
Then, by \eqref{eq:Selberg}, we have
\[
a_1 = \sum_{(n,P(z)) = 1} a_n \leq kX^B H^{-1} + O\lp \sum_{\substack{d \leq D_0\\d|P}} \tau_3(d) r_d \rp.
\]
Assume now that as $p \to \infty$, the density $g_p$ converges to some constant
$\lambda \in (0,1)$. Then, we are able to obtain bounds for $H$ and $r_d$
depending only on $X$ and the choice of $D$. Given that $d^{-\eps} \ll_{\eps} g_d \ll_{\eps} d^{\eps}$,
we get that $H = D_0^{1/2+o(1)}$. For the error term, we get that
\[
\left| \sum_{\substack{d \leq D_0\\d|P}} \tau_3(d) r_d \right| \ll_{\eps} X^{B-D}D_0^{\eps}\sum_{d\leq D_0} d^C \ll_{\eps} X^{B-D} D_0^{C+1+\eps}.
\]
The end result is that $a_1 \ll_{\eps} X^{B}D_0^{-1/2+\eps} + X^{B-D}D_0^{C+1+\eps}$.
By making an appropriate choice of $D$ as a power of $X$, we can optimize this
expression to yield $a_1 = O(X^{B-\delta})$ for some $\delta > 0$.

So, in summary, to use the Selberg sieve in the same way that is used
in \cite{STSelberg}, it will suffice to have an expression of the form 
\eqref{eq:powerSave}, and a proof that the densities of our sets $S_p$ converge
to some constant in $(0,1)$ as $p$ goes to infinity.

\subsection{Reductions}
We return to the setting of Section \ref{subs:averaging}, where we had
\[
N(V_i,X) = \frac{1}{r_i \vol(G_0)} \int_{g \in \cF} \# \{v \in V(\Z)^{red} \cap (g G_0 \kappa(\Lambda L_i))_{< X} \} dg. 
\]
To estimate this quantity, we will make some necessary reductions. We will
begin with a ``cutting-off-the-cusp'' result, which amounts to saying that
not too many points in the cusp are irreducible.
\begin{prop}
\label{prop:cusp}
Let $v_0$ be the coefficient of the highest weight in $V$. Then, there
exists a constant $\delta_1 > 0$ such that
\[
\int_{g \in \cF} \# \{v \in (V(\Z) \setminus W_0(\Z)) \cap g\cB_X \mid v_0 = 0 \} dg = O(X^{\dim V - \delta_1}).
\]
\end{prop}
\begin{proof}
This is the content of \cite[Proposition 8.12]{LagaThesis}.
\end{proof}
In a similar spirit, we also show that most of the elements in the main
body are irreducible:
\begin{prop}
Let $v_0$ be the coefficient of the highest weight in $V$. Then, there
exists a constant $\delta_2 > 0$ such that
\[
\int_{g \in \cF} \# \{v \in V(\Z)^{red} \cap g\cB_X \mid v_0 \neq 0 \} dg = O(X^{\dim V - \delta_2})
\]
\end{prop}
\begin{proof}
We will prove this statement by using the Selberg sieve, as explained
in Section \ref{subs:Selberg}. First of all, if $v \in V(\Z)$ is reducible, 
then for all primes $p$ not dividing the $N$ fixed in Proposition \ref{prop:admissible}
the reduction of $v$ mod $p$ is reducible, since by Theorem \ref{theo:intKostant}
$v$ is $G(\Z_p)$ conjugate to $\kappa_b$.
By \cite[Proof of Lemma 8.22]{LagaThesis}, the density of elements in $V(\F_p)$ which
are $\F_p$-reducible converges to some constant $\lambda \in (0,1)$.

To apply the Selberg sieve, we need some result in the style of \eqref{eq:powerSave}.
This is essentially the content of \cite[Proposition 8.15 and Theorem 8.17]{LagaThesis};
a power saving estimate can be obtained similarly to \cite[Proposition 10.5]{BG},
and the contribution from the congruence conditions can be done similarly
to our proof of Theorem \ref{theo:congruence}: we do not repeat it here
for the sake of concision.
\end{proof}

\subsection{Counting reducible orbits}
\label{subs:counting}
The previous reductions show that when trying to estimate
\[
N(V_i,X) = \frac{1}{r_i \vol(G_0)} \int_{g \in \cF} \# \{v \in V(\Z)^{red} \cap (g G_0 \kappa(\Lambda L_i))_{< X} \} dg,
\]
it is sufficient to work over the cusp $W_0(\Z)$ up to a power-saving
error term.
Given that $\cF$ is a box-shaped fundamental domain,
we can write it as a disjoint union $\cF' \cup \omega T_c \ol{K}$, where $\omega$,
$c$ and $\ol{K}$ are as in Section \ref{subs:corners} and $\cF'$ is a subset of
\[
\omega \cdot \{t \in T^{\theta}(\R)^{\circ} \mid \alpha(t) \leq c \text{ for some }\alpha \in S_G\} \cdot \ol{K}.
\]
An explicit computation (e.g. following the reasoning in this section and
in Section \ref{subs:cases}) shows that the integral in \eqref{eq:averaging} is negligible when $\cF$
is substituted by $\cF'$. Hence, it suffices to integrate over $\omega T_c \ol{K}$.
Given that $G_0$ is $K$-invariant and that $dk$ can be normalised so that
$\ol{K}$ has volume 1, we get:
\[
N(V_i,X) = \frac{1}{r_i \vol(G_0)}\int_{n \in \omega} \int_{t \in T_c} \# \{v \in W_0(\Z) \cap nt\cB_X \} \delta^{-1}(t) dn d^{\times}t + O\lp X^{\dim V - \delta} \rp
\]
for some $\delta > 0$. It would be desirable to estimate the lattice
points in the region using Davenport's lemma; however, as noted in \cite{SSSVcusp}, the cuspidal region is too skewed
to apply the lemma directly: in particular, some of volumes of the projections
can be of the order of the main term. To circumvent this, we will ``slice''
the region $W_0(\Z)$ according to the values of the height-one coefficients.
For $v \in W_0(\Z)$, denote by $(\sigma_1(v),\dots,\sigma_r(v))$ its
height-one coefficients. Then, for any $b = (b_1,\dots,b_r) \in \R^r$
and any subset $S \subset W_0(\R)$, we will denote
\[
S|_b = \{v \in S \mid (\sigma_1(v),\dots,\sigma_r(v)) = b\}.
\]
Then, we can express
\[
\# (W_0(\Z) \cap (nt\cB_{X})) = \sum_{b \in \Z^{r}} \# (W_0(\Z) \cap (nt\cB_{X})|_b).
\]
Actually, we can assume that in the sum over $b = (b_1,\dots,b_r)\in \Z^r$, none
of the components $b_i$ are equal to zero due to the following:
\begin{lemma}
Let $v \in W_0(\R)$. If $\sigma_i(v) = 0$ for some $i$, then $\Delta(v) = 0$.
\end{lemma}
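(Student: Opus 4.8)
The plan is to show that if $v \in W_0(\Q)$ has vanishing $Q$-invariant, then $v$ cannot be regular semisimple, and then invoke Proposition \ref{prop:H-M} to conclude $\Delta(v) = 0$. By definition, $Q(v) = \left|\prod_{\text{ht}(\alpha) = 1} v_{\alpha}\right|$, so $Q(v) = 0$ means that $v_{\alpha} = 0$ for at least one simple root $\alpha \in S_H$ (the height-one restricted roots $a \in \Phi/\vartheta$ with $\text{ht}(a) = 1$ correspond precisely to the simple roots of $H$, or pairs of them swapped by $\vartheta$).

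First I would recall the structure of $W_0$: writing $v = \sum_{\text{ht}(\alpha)=1} v_{\alpha} X_{\alpha} + \sum_{\text{ht}(\beta)\le 0} v_{\beta} X_{\beta}$, an element of $W_0$ lives in the sum of root spaces of non-positive height together with the simple root spaces. The key observation is that $E = \sum_{\alpha \in S_H} X_{\alpha}$ has all its height-one coefficients equal to $1$, and this is what makes $E$ (hence the Kostant section) regular. Conversely, if some $v_{\alpha_0} = 0$ for a simple root $\alpha_0$, then $v$ is supported away from the $\alpha_0$-direction entirely: all roots appearing in the support of $v$ lie in the span of $S_H \setminus \{\alpha_0\}$ together with negative roots, i.e. $v$ lies in a proper parabolic-type subalgebra, specifically in $\Gg_{\ge}$ missing one simple direction.

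The cleanest way to finish is a one-parameter subgroup argument using criterion (3) of Proposition \ref{prop:H-M}. I would exhibit a nontrivial cocharacter $\lambda \colon \G_m \to G_{k^s}$ (or rather of $T^{\theta}$) with respect to which $v$ has no positive weight. Concretely: let $\varpi_{\alpha_0}$ be (a suitable multiple of) the fundamental coweight dual to $\alpha_0$, which we may arrange to be $\theta$-fixed so that it defines a cocharacter of $T^{\theta}$ rather than just of $T$. For a root $\beta = \sum_{\gamma \in S_H} c_{\gamma} \gamma$, the weight of $X_{\beta}$ under this cocharacter is $c_{\alpha_0}$. Since every root $\beta$ in the support of $v$ has $\text{ht}(\beta) \le 1$ and $\beta \ne \alpha_0$, we get $c_{\alpha_0} \le 0$ for all such $\beta$: indeed if $\text{ht}(\beta) \le 0$ then all coefficients $c_\gamma \le 0$, and if $\text{ht}(\beta) = 1$ then $\beta$ is a simple root other than $\alpha_0$, so $c_{\alpha_0} = 0$. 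Hence $v$ has only non-positive weights for $\lambda$, so by Proposition \ref{prop:H-M}(3) $v$ is not regular semisimple, and by the equivalence with (2), $\Delta(v) = 0$.

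The main obstacle I anticipate is the bookkeeping needed to ensure the cocharacter can be chosen to factor through $T^{\theta}$ (so that it is genuinely a cocharacter of $G$, as required by Proposition \ref{prop:H-M}) rather than merely through $T$; this is where the case distinction of Section \ref{section:ResRoots} — whether $\alpha_0$ is $\vartheta$-fixed or paired with another simple root — enters, and one must average the fundamental coweight over the $\vartheta$-orbit. A slicker alternative that sidesteps this is to argue directly at the level of $\hh$: if $v_{\alpha_0} = 0$ then $v$ lies in the Lie algebra of the derived subgroup of the standard parabolic of $H$ omitting $\alpha_0$ (a proper Levi plus nilradical), which contains no regular semisimple elements of $\hh$ of the right type, whence $\Delta(v) = 0$; one then only needs that $\Delta$ restricted to $V$ computes the discriminant of $\hh$, which is Proposition \ref{prop:H-M}. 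I would likely present the direct $\hh$-level argument, as it avoids all the $\vartheta$-twisting subtleties.
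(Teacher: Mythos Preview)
Your proposal is correct and takes essentially the same approach as the paper: both use the Hilbert--Mumford criterion of Proposition~\ref{prop:H-M}(3) by exhibiting a cocharacter for which $v$ has no positive weight. The paper's proof is a one-liner: it takes $\lambda_i\colon \G_m \to G_{\ol\Q}$ with $(\alpha_j\circ\lambda_i)(t)=t^{\delta_{ij}}$, where the $\alpha_j$ are the height-one \emph{restricted} weights in $X^*(T^\theta)$, so the $\vartheta$-invariance issue you anticipate simply does not arise---working with $\Phi/\vartheta$ from the outset (rather than with simple roots of $H$ and then averaging) is exactly what sidesteps it.
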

\begin{proof}
Let $\{\alpha_1,\dots,\alpha_k\}$ be the height-one weights, and assume
that the coefficient of $\alpha_i$ of $v$ is zero. Let 
$\lambda_i\colon \G_m \to G_{\C}$ be the one-parameter subgroup 
such that $(\alpha_j \circ \lambda_i)(t) = t^{\delta_{ij}}$. Then,
$v$ has no positive weights with respect to $\lambda_i$, and so by Proposition
\ref{prop:H-M} we get the result.
\end{proof}
When applying Proposition \ref{prop:Davenport} to $(nt\cB_{X})|_b$,
we get
\begin{equation}
\label{eq:Davenport}
\# (W_0(\Z) \cap (nt\cB_{X})|_b) = \vol((nt\cB_{X})|_b) (1 + O(X^{-1})).
\end{equation}
The term $O(X^{-1})$ can be obtained as follows: each coefficient $v_0$ in $W_0(\R)$
has a weight under the action of $T$, which we will denote $w(v_0)$. When
performing the slicing, that is, fixing the values of the height-one coefficients,
all the weights turn out to be $\gg X$, and given that the volume 
of the region is the product of the weights of the different coordinates,
we obtain the saving of size $X$.

Given that unipotent transformations preserve the volume, and that we can
normalise $dn$ so that $\vol(\omega) = 1$, we can write the following:
\begin{equation}
\label{eq:middlestep}
N(V_i,X) = \frac{1}{r_i \vol(G_0)} \sum_{b \in (\Z \setminus\{0\})^r}\int_{t \in T_c} \vol((t\cB_X)|_b) \delta^{-1}(t) d^{\times}t + O(X^{\dim V - \delta}).
\end{equation}
For each height-one coefficient $v_i$, we will
denote $\beta_i := (X w(v_i) b_i)^{-1}$, and $\beta = (\beta_i)_i$. Denote
by $W_{\flat}$ the set of coordinates of $W_0$ of non-positive height. It follows that
\[
\vol((t\cB_X)|_b) = \vol(t X\cdot\cB|_{\beta}) = X^{\dim W_{\flat}} \prod_{v \in W_{\flat}}w(v) \vol(\cB|_{\beta}).
\] 
We will make the change of variables $t \mapsto \beta = (\beta_1,\dots,\beta_r)$, under which
$d^{\times}t = d^{\times} \beta = \prod_i\frac{d \beta_i}{\beta_i}$. In
Section \ref{subs:cases}, we will explicitly compute the volume of the cuspidal region for
each of the possible cases. We will obtain a polynomial $Z(\beta) = \prod_{i}\beta_i^{e_i}$
with integer exponents $e_i \geq 2$, and we will see that
\begin{equation}
\label{eq:explicit}
X^{\dim W_{\flat}} \prod_{v \in W_{\flat}}w(v) \delta^{-1}(t) = X^{\dim V} \frac{Z(\beta)}{Z(b)}.
\end{equation}
It follows that
\[
\int_{t \in T_c} \vol((t\cB_X)|_b)  \delta^{-1}(t) d^{\times}t = \frac{X^{\dim V}}{|Z(b)|} \int_{\beta \in \R_{>0}^r \setminus T'} Z(\beta) \vol(\cB|_{\beta}) d^{\times}\beta,
\]
where $T'$ is the region corresponding to $T(\R) \setminus T_c$. It is not difficult to see
that the integral over $T'$ is $O(X^{-1})$, and hence can be
added to the error term. For an element $v \in W_0(\Z)$, define $Z(v) := Z(\sigma_i(v)) = \prod_i \sigma_i(v)^{e_i}$
and $Z^{\times}(v) :=  \prod_i \sigma_i(v)^{e_i-1}$. Then,
\begin{align}
\label{eq:laststep}
\begin{split}
\int_{t \in T_c} \vol((t\cB_X)|_b) \delta^{-1}(t) d^{\times}t &= \frac{X^{\dim V}}{|Z(b)|} \int_{\beta \in \R_{>0}^r} Z(\beta) \vol(\cB|_{\beta}) d^{\times}\beta + O(X^{\dim V - 1}) \\
&= \frac{X^{\dim V}}{|Z(b)|} \int_{v \in \cB \cap W_0(\R)_+} Z^{\times}(v) dv + O(X^{\dim V - 1}).
\end{split}
\end{align}
Here, $W_0(\R)_+ = \{v \in W_0(\R) \mid \sigma_i(v) > 0, \, \forall i\}$.
Combining \eqref{eq:middlestep} and \eqref{eq:laststep},
and summing over all $b$, we obtain that
\begin{equation}
\label{eq:Vi}
N(V_i,X) = 2^r \prod_{i=1}^r \zeta(e_i) \cdot \lp \frac{1}{r_i \vol(G_0)} \int_{v \in \cB \cap W_0(\R)_+} Z^{\times}(v) dv \rp X^{\dim V} + O(X^{\dim X - \delta}).
\end{equation}
To obtain the desired asymptotic for $N(V(\Z)^{red},X)$, it suffices to
use the inclusion-exclusion principle. For any subset $I \subset \{1,\dots,r\}$,
the same procedure as above obtains \eqref{eq:Vi} for the the set
$V_I = \cap_{i\in I} V_i$, with the appropriate constants substituted.
This concludes the proof of Theorem \ref{theo:reducible}.

\subsection{Computing the constant}
\label{subs:constant}
As promised, we will compute the constant of the main term of Theorem 
\ref{theo:reducible}. We will do so using a Jacobian change-of-variables
formula, whose statement and proof are completely analogous to \cite[Proposition 14]{SSSVcusp}:
we include the proof in our case for convenience.
\begin{prop}
\label{prop:Jacobian}
Let $\phi \colon W_0(\R) \to \R$ be a measurable function. Then, there 
exists a non-zero rational constant $\cJ \in \Q^{\times}$ such that
\[
\int_{v \in W_0(\R)} \phi(v) |Z^{\times}(v)| dv = |\cJ| \int_{\substack{b \in B(\R) \\ \Delta(b) \neq 0}} \lp \sum_{v \in \frac{\pi^{-1}(b)}{P^{-,\theta}(\R)}} \int_{h \in P^{-,\theta}(\R)}\phi(h \cdot v) dh \rp db.
\]
Here, $dv$ and $db$ are Euclidean measures, and $dh = \delta^{-1}(t) dn d^{\times}t$
is a right Haar measure for $P^{-,\theta}(\R)$.
\end{prop}
\begin{proof}
Let $U \subset B(\R)$ be an open subset, and let $\sigma \colon U \to W_0(\R)$
be a continuous section of the GIT quotient map $\pi \colon V \to B$.
We first claim that we have
\begin{equation}
\label{eq:CVstep}
\int_{v \in P^{-,\theta}(\R)\sigma(U)} \phi(v) |Z^{\times}(v)| dv = |\cJ| \int_{b \in U} \int_{h \in P^{-,\theta}(\R)}\phi(h \cdot \sigma(b)) dh db
\end{equation}
for some non-zero rational constant $\cJ$. By the Stone-Weierstrass theorem,
we can assume that $\sigma$ is piecewise analytic, in which case we have
\[
\int_{v \in P^{-,\theta}(\R)\sigma(U)} \phi(v) |Z^{\times}(v)| dv = \int_{b \in U} \int_{h \in P^{-,\theta}(\R)}|\cJ_{\sigma}(h,b)|\phi(h \cdot \sigma(b)) dh db,
\]
where $\cJ_{\sigma}(h,b)$ denotes the determinant of the Jacobian matrix
arising from the change of variables that takes the measure $Z^{\times}(v) dv$
to $dh db$. We will now show that $\cJ_{\sigma}(h,b)$ is independent of
$\sigma$, $h$ and $b$.

To show that $\cJ_{\sigma}(h,b)$ is independent of $h$, we fix $\gamma \in P^{-,\theta}(\R)$
and consider the change of variables $v \mapsto \gamma \cdot v$ in $W_0(\R)$.
We have that $Z^{\times}(\gamma \cdot v) d(\gamma \cdot v) = \chi(\gamma) Z^{\times}(v) dv$
for some character $\chi \colon P^{-,\theta}(\R) \to \R_{> 0}$, which we now
determine explicitly. If $\gamma \in \ol{N}(\R)$, then $\chi(\gamma) = 1$,
since neither $Z^{\times}$ or the volume of $W_0(\R)$ are changed by the
action of $\ol{N}(\R)$. Now, assume that $\gamma \in T^{\theta}(\R)$. On
one hand, we have that
\[
Z^{\times}(\gamma \cdot v) = \lp\prod_{\alpha_i \in S_G} \alpha_i(\gamma)^{e_i-1}\rp Z^{\times}(v).
\]
On the other hand, we have that
\[
d(\gamma \cdot v) = \lp\prod_{\substack{\alpha \in \Phi(G,T^{\theta})\\ \Ht(\alpha) \leq 1}} \alpha(\gamma)\rp^{-1} dv.
\]
In view of \eqref{eq:explicit}, we conclude that $\chi(\gamma) = \delta^{-1}(\gamma)$.
On the other hand, for $\gamma = nt \in P^{\theta}(\R)$ we also have that
\[
\cJ_{\sigma}(\gamma h,b) d(\gamma h) db = \delta^{-1}(t) \cJ_{\sigma}(\gamma h,b) dh db
\]
because $dh$ is a right Haar measure of $H$, and $\delta^{-1}(t)$ is the
corresponding modular function (cf. \cite[(8.26)]{Knapp}). We then have that
\[
\cJ_{\sigma}(\gamma h, b)d(\gamma h) db = Z^{\times}(\gamma v) dv = \delta^{-1}(t) Z^{\times}(v) dv = \delta^{-1}(t) \cJ_{\sigma}(h,b) dhdb,
\]
and hence that $\cJ_{\sigma}(h,b) = \cJ_{\sigma}(\gamma h,b)$ is independent of $h$, as wanted.

The rest of the proof now follows analogously to \cite[Proof of Proposition 3.10]{BSquartics}.
More precisely, that $\cJ_{\sigma}(h,b)$ is independent of $\sigma$ is analogous
to Step 2 in \cite[Proof of Proposition 3.10]{BSquartics}; in particular,
we can take $\sigma$ to be the Kostant section. Then, independence of
$b$ follows from steps 3 and 4 in \cite[Proof of Proposition 3.10]{BSquartics}.

Thus, we have shown \eqref{eq:CVstep}. The proposition now follows from
from \eqref{eq:CVstep} in a similar way as how \cite[Proposition 3.7]{BSquartics}
follows from \cite[Proposition 3.10]{BSquartics}.
\end{proof}
The proof of Theorem \ref{theo:reducible} shows that the leading constant
in the asymptotic for $N(V_i,X)$ is:
\[
2^r \prod_{i=1}^r \zeta(e_i) \sum_{i=1}^k \frac{1}{r_i \vol(G_0)} \int_{v \in \cB \cap W_0(\R)_+} Z^{\times}(v) dv.
\]
Here, $r$ is the amount of height-one coefficients, $\zeta$ is the Riemann
zeta function, and $e_i$ are the exponents corresponding to $Z(\beta) = \prod_i \beta_i^{e_i}$.
We will now give a more succint description of the above integral, following
\cite[\S 4.3]{SSSVcusp}.

Given that $G_0$ is $K$-invariant, we can write it as $G_0 = \cS\cdot K$,
for some set $\cS \subset P^{-,\theta}(\R)$. We have the following lemma:
\begin{lemma}
The map $\pi \colon K \kappa(\Lambda L_i) \cap W_0(\R)_+ \to L_i$ is $r_i$ to $1$.
\end{lemma}
\begin{proof}
The result follows from the fact that every element $v \in K\kappa(\Lambda L_i)$
satisfies $\#\Stab_{G(\R)} v = r_i$, and that if $g \in \Stab_{G(\R)} v$,
then writing $g = pk$ for $p \in P^{-,\theta}(\R)$ and $k \in K$, we get
that $kv = p^{-1}v$, so $kv$ belongs to $K \kappa(\Lambda L_i) \cap W_0(\R)_+$.
Conversely, given that $P^{-,\theta}$ acts simply transitively on $W_0(\R)$,
any element in $K \kappa(L_i) \cap W_0(\R)_+$ that is conjugate to $v$
has to be of the form $kv = p'v$ for some $k \in K$ and $p' \in P^{-,\theta}$.
\end{proof}

Now, setting $\phi$ to be the indicator function of $\cB \cap W_0(\R)_+$
in Proposition \ref{prop:Jacobian}, we obtain
\[
\frac{1}{r_i \vol(G_0)} \int_{v \in \cB_{<X} \cap W_0(\R)_+} Z^{\times}(v) dv
= \frac{|\cJ|r_i \vol(K\cS) \vol(\{b \in \Lambda L_i \mid \Ht(b) < 1\})}{r_i \vol(\cS K)}.
\]
However, we observe that:
\begin{lemma}
We have $\cS K = K\cS$.
\end{lemma}
\begin{proof}
Recall that $G_0$ is left and right 
$K$-invariant and satisfies $G_0^{-1} = G_0$. Then,
\[
K \cS \subset K \cS K = \cS K = G_0 = G_0^{-1} = K\cS^{-1}.
\]
By uniqueness in the Iwasawa decompsition, we must have that $\cS \subset \cS^{-1}$,
and symmetrically that $\cS = \cS^{-1}$. Therefore, $\cS K = K \cS^{-1} = K \cS$,
as wanted.
\end{proof}
We are left to deal with the volumes of the corresponding $L_i$ terms, which
we do using the inclusion-exclusion principle. The end result is
\[
N(V(\Z)^{red},X) \sim 2^r \prod_{i=1}^r \zeta(e_i) |\cJ| \vol(\{b \in B(\R) \mid \Ht(b) < 1\}) X^{\dim V}.
\]
We can compare this result with the asymptotics for $N(V(\Z)^{irred},X)$,
which can be read off \cite[Theorem 8.8]{LagaThesis}. In there, one of
the factors of the constant is related to the volume of $G(\Z)\backslash G(\R)$ 
with respect to a suitably normalised Haar measure, and can be done 
following \cite{LVolume} and \cite{Coleman58}, for instance. Surprisingly,
we get that
\[
\vol(G(\Z) \backslash G(\R)) = c \prod_{i=1}^r \zeta(e_i),
\]
where $c$ is the order of the fundamental group of $G_{\C}$ and $e_i$
turn out to be the same exponents as above; in particular, the constants for the
reducible and irreducible case appear to be the same up to some rational
factor, thus answering Question 2 of \cite{SSSVcusp} affirmatively for
our representations $(G,V)$. However, the two methods of obtaining the
constants appear to be fundamentally different, and we wonder if there is
any ``natural'' explanation as to why they should give the same result.

\subsection{Case-by-case analysis}
\label{subs:cases}

In this section, we complete the proof of Theorem \ref{theo:reducible} by performing a case-by-case analysis.
For the $D_n$ and $E_n$ cases, we will explicitly compute the dimension
and volume of $W_{\flat}$ (which was defined to be the set of coefficients
of $W_0$ of non-positive height), and the modular function
$\delta(t) = \prod_{\beta \in \Phi_G^{-}} \beta(t) = \det \Ad(t)|_{\Lie \overline{N}(\R)}$.

\subsubsection{$D_{2n+1}$}

The exposition in the $D_n$ cases is inspired by \cite[Appendix A]{LagaThesis}
and \cite[\S 7.2.1]{ShankarD2n+1}.
We start by describing explicitly the representation $(G,V)$ of $D_{2n+1}$
in the form given by Table \ref{table:explicit}.

Let $n \geq 2$ be an integer. Let $U_1$ be a $\Q$-vector space with basis
$\{e_1,\dots,e_n,u_1,e_n^{*},\dots,e_1^{*}\}$, endowed with the symmetric
bilinear form $b_1$ satisfying $b_1(e_i,e_j) = b_1(e_i,u_1) = b_1(e_i^*,e_j^*) = b_1(e_i^*,u_1) = 0$,
$b_1(e_i,e_j^*) = \delta_{ij}$ and $b_1(u_1,u_1) = 1$ for all $1 \leq i,j \leq n$.
In this case, given a linear map $A \colon U \to U$ we can define its \emph{adjoint}
as the unique map $A^{\ast} \colon U \to U$ satisfying $b_1(Av,w) = b_1(v,A^{\ast}w)$
for all $v,w \in U$. In terms of matrices, $A^*$ corresponds to taking 
the reflection of $A$ along its antidiagonal when working with the fixed basis. We can define 
$\So(U_1,b_1) := \{ g \in \SL(U_1) \mid gg^* = \id\}$, with a Lie algebra
that can be identified with $\{A \in \End(U) \mid A = -A^* \}$.

Let $U_2$ be a $\Q$-vector space with basis $\{f_1,\dots,f_n,u_2,f_n^{\ast},\dots,f_1^{\ast}\}$,
with a similarly defined bilinear form $b_2$. Let $(U,b) = (U_1,b_1) \oplus (U_2,b_2)$.
Let $H = \So(U,b)$, and consider $\hh := \Lie H$. With respect to the basis
\[
\{e_1,\dots,e_n,u_1,e_n^{\ast},\dots,e_1^{*},f_1,\dots,f_n,u_2,f_n^{\ast},\dots,f_1^{\ast}\},
\]
the adjoint of a block matrix according to the bilinear form $b$ is given by
\[
\begin{pmatrix}
A & B \\ C & D
\end{pmatrix}^*
=
\begin{pmatrix}
A^* & C^* \\ B^* & D^*
\end{pmatrix},
\]
where $A^*,B^*,C^*,D^*$ denote reflection by the antidiagonal. An element
of $\hh$ is given by
\[
\left\{\begin{pmatrix}
B & A \\ -A^* & C
\end{pmatrix} \,\middle\vert\; B = -B^*, \, C = -C^*\right\}.
\]
The stable involution $\theta$ is given
by conjugation by $\diag(1,\dots,1,-1,\dots,-1)$, where the first $2n+1$
entries are $1$ and the last $2n+1$ entries are given by $-1$. Under this
description, we see that
\[
V = \left\{\begin{pmatrix}
0 & A \\ -A^* & 0
\end{pmatrix} \,\middle\vert\; A \in \text{Mat}_{(2n+1)\times(2n+1)}\right\}.
\]
Moreover, $G = (H^{\theta})^{\circ}$ is isomorphic to $\So(U_1) \times \So(U_2)$.
We will use the map
\[
\begin{pmatrix}
0 & A \\ -A^* & 0
\end{pmatrix} \mapsto A
\]
to establish a bijection between $V$ and $\Hom(U_2,U_1)$, where $(g,h) \in \So(U_1) \times \So(U_2)$
acts on $A \in V$ as $(g,h)\cdot A = gAh^{-1}$.

Let $T$ be the maximal torus $\diag(t_1,\dots,t_n,1,t_n^{-1},\dots,t_1^{-1},s_1,\dots,s_n,1,s_n^{-1},\dots,s_1^{-1})$
of $G$. A basis of simple roots for $G$ is
\[
S_G = \{t_1-t_2,\dots,t_{n-1}-t_n\} \cup \{s_1-s_2,\dots,s_{n-1}-s_n\}.
\]
A positive root basis for $V$ can be taken to be
\[
S_V = \{t_1-s_1,s_1-t_2,\dots,t_n-s_n,s_n\}.
\]
For convenience, we now switch to multiplicative notation for the roots.
We make the change of variables $\alpha_i = t_i/t_{i+1}$ for $i = 1,\dots,n-1$
and $\alpha_n = t_n$; similarly $\gamma_i = s_i/s_{i+1}$ for $i = 1,\dots,n-1$
and $\gamma_n = s_n$. The estimate for the volume of $W_{\flat}$ becomes:
\[
\prod_{v \in W_{\flat}} Xw(v) = X^{2n^2+2n+1} \prod_{i=1}^n \alpha_i^{-2in+i^2-2i}\gamma_i^{-2in+i^2}.
\]
The modular function in our case is
\[
\delta^{-1}(t) = \prod_{i=1}^n \alpha_i^{2in-i^2}\gamma_i^{2in-i^2}.
\]
Changing variables to $\beta_i = (X w(v_i)b_i)$, where $v_i$ are the 
height-one coefficients, we obtain
\[
\prod_{v \in W_{\flat}} Xw(v) \delta^{-1}(t) = X^{4n^2+4n+1} \frac{Z(\beta)}{Z(b)},
\]
where $Z(\beta) := \prod_{i=1}^n (\beta_{2i-1}\beta_{2i})^{2i}$.
\subsubsection{$D_{2n}$}
The analysis in this case is very similar to the $D_{2n+1}$ case. Now, we
consider the $\Q$-vector space $U_1$ with basis $\{e_1,\dots,e_n,e_n^*,\dots,e_1^*\}$,
endowed with a symmetric bilinear form $b_1(e_i,e_j) = b_1(e_i^*,e_j^*) = 0$, 
$b_1(e_i,e_j^*) = \delta_{ij}$. We also consider a $\Q$-vector space $U_2$
with basis $\{f_1,\dots,f_n,f_n^*,\dots,f_1^*\}$, with an analogous symmetric
bilinear form $b_2$.

Let $(U,b) = (U_1,b_1) \oplus (U_2,b_2)$, let $H' = \So(U,b)$ and define
$H$ to be the quotient of $H'$ by its centre of order 2. Under the basis
\[
\{e_1,\dots,e_n,e_n^*,\dots,e_1^*,f_1,\dots,f_n,f_n^*,\dots,f_1^*\},
\]
the stable involution is given by conjugation with $\diag(1,\dots,1,-1,\dots,-1)$.
Similarly to the $D_{2n+1}$ case, we have
\[
V = \left\{\begin{pmatrix}
0 & A \\ -A^* & 0
\end{pmatrix} \,\middle|\; A \in \text{Mat}_{2n \times 2n}\right\},
\]
where $A^*$ denotes reflection by the antidiagonal. In this case, the
group $G = (H^{\theta})^{\circ}$ is isomorphic to $\So(U_1) \times \So(U_2)/\Delta(\mu_2)$,
where $\Delta(\mu_2)$ denotes the diagonal inclusion of $\mu_2$ into the
centre $\mu_2 \times \mu_2$ of $\So(U_1) \times \So(U_2)$. As before,
we can identify $V$ with the space of $2n\times2n$ matrices using the
map
\[
\begin{pmatrix}
0 & A \\ -A^* & 0
\end{pmatrix} \mapsto A,
\]
where $(g,h) \in G$ acts by $(g,h)\cdot A = gAh^{-1}$.

We consider the maximal torus $T$ of $H$ given by $\diag(t_1,\dots,t_n,t_n^{-1},\dots,t_1^{-1},s_1,\dots,s_n,s_n^{-1},\dots,s_1^{-1})$.
A basis of simple roots for $H$ and $G$ are given by
\begin{align*}
&S_H = \{t_1-s_1,s_1-t_2,\dots,s_{n-1}-t_n,t_n-s_n,s_n+t_n\}, \\
&S_G = \{t_1-t_2,\dots,t_{n-1}-t_n,t_{n-1}+t_n\} \cup \{s_1-s_2,\dots,s_{n-1}-s_n,s_{n-1}+s_n\}.
\end{align*}

Let $\alpha_i = t_i/t_{i+1}$ and $\gamma_i = s_i/s_{i+1}$ for $i = 1,\dots,n$,
and let $\alpha_n = t_{n-1}t_n$ and $\gamma_n = s_{n-1}s_n$. Under this change
of variables, the volume of $W_{\flat}$ is:
\[
\prod_{v \in W_{\flat}}Xw(v) = X^{2n^2} \left(\prod_{i=1}^{n-2}\alpha_i^{-2in+i^2-i}\alpha_{n-1}^{(-n^2-n+4)/2}\alpha_n^{(-n^2-n)/2}\prod_{i=1}^{n-2}\gamma_i^{-2in+i^2+i}(\gamma_{n-1}\gamma_n)^{(-n^2+n)/2}\right).
\]
The modular function is
\[
\delta^{-1}(t) = \prod_{i=1}^{n-2} (\alpha_i\gamma_i)^{i^2-2in+i}(t) (\alpha_{n-1}\gamma_{n-1}\alpha_n\gamma_n)^{-(n-1)n/2}(t).
\]
As before, we can compute:
\[
\prod_{v \in W_{\flat}} Xw(v) \delta^{-1}(t) = X^{4n^2} \frac{Z(\beta)}{Z(b)},
\]
where $Z(\beta) = \prod_{i=1}^{n-1}(\beta_{2i-1}\beta_{2i})^{2i}\cdot(\beta_{2n-1}\beta_{2n})^n$.
\subsubsection{$E_6$}
For the $E_6$ case, we use the conventions and computations in \cite[\S 2.3, \S 5]{ThorneE6}.

Let $S_H = \{\alpha_1,\dots,\alpha_6\}$, where the Dynkin diagram of $H$ is:
\begin{center}
\begin{tikzpicture}[scale=0.6]
  \node[circle, draw,label={$\alpha_1$}] (A1) at (0,0) {};
  \node[circle, draw,label={$\alpha_3$}] (A3) at (2,0) {};
  \node[circle, draw,label={$\alpha_4$}] (A4) at (4,0) {};
  \node[circle, draw,label={$\alpha_5$}] (A5) at (6,0) {};
  \node[circle, draw,label={$\alpha_6$}] (A6) at (8,0) {};
  \node[circle, draw,label=below:{$\alpha_2$}] (A2) at (4,-2) {};
  
  \draw[-] (A1) -- (A3) -- (A4) -- (A5) -- (A6);
  \draw[-] (A2) -- (A4);
\end{tikzpicture}
\end{center}
The pinned automorphism $\vartheta$ consists of a reflection
around the vertical axis. We can define a root basis $S_G = \{\gamma_1,\gamma_2,\gamma_3,\gamma_4\}$
of $G$ as $\gamma_1 = \alpha_3+\alpha_4$, $\gamma_2 = \alpha_1$, $\gamma_3 = \alpha_3$ and $\gamma_4 = \alpha_2 + \alpha_4$.
Under this basis, we have
\[
\prod_{v \in W_{\flat}}X\omega(v) = X^{22} (\gamma_1^{-12}\gamma_2^{-18}\gamma_3^{-22}\gamma_4^{-12})
\]
The modular function is
\[
\delta^{-1}(t) = \left(\gamma_1^{8}\gamma_2^{14}\gamma_3^{18}\gamma_4^{10}\right)(t).
\]
The weights of the height-one coefficients are $\{\gamma_2,-\gamma_1+\gamma_3+\gamma_4,\gamma_3,\gamma_1-\gamma_3\}$.
In light of this, we obtain
\[
\prod_{v \in W_{\flat}}X\omega(v) \delta^{-1}(t) = X^{42}\frac{Z(\beta)}{Z(b)}.
\]
where $Z(\beta) = \beta_1^4 \beta_2^2 \beta_3^8 \beta_4^6$.
\subsubsection{$E_7$}
For the $E_7$ and $E_8$ cases, we follow the conventions in \cite{RTE78}.
Let $S_H = \{\alpha_1,\dots,\alpha_7\}$, where the Dynkin diagram of $H$ is:
\begin{center}
\begin{tikzpicture}[scale=0.6]
  \node[circle, draw,label={$\alpha_1$}] (A1) at (0,0) {};
  \node[circle, draw,label={$\alpha_3$}] (A3) at (2,0) {};
  \node[circle, draw,label={$\alpha_4$}] (A4) at (4,0) {};
  \node[circle, draw,label={$\alpha_5$}] (A5) at (6,0) {};
  \node[circle, draw,label={$\alpha_6$}] (A6) at (8,0) {};
  \node[circle, draw,label={$\alpha_7$}] (A7) at (10,0) {};
  \node[circle, draw,label=below:{$\alpha_2$}] (A2) at (4,-2) {};
  
  \draw[-] (A1) -- (A3) -- (A4) -- (A5) -- (A6) -- (A7);
  \draw[-] (A2) -- (A4);
\end{tikzpicture}
\end{center}
The root basis $S_G = \{\gamma_1,\dots,\gamma_7\}$ can be described as
\begin{align*}
\gamma_1 &= \alpha_3 + \alpha_4 \\
\gamma_2 &= \alpha_5 + \alpha_6 \\
\gamma_3 &= \alpha_2 + \alpha_4 \\
\gamma_4 &= \alpha_1 + \alpha_3 \\
\gamma_5 &= \alpha_4 + \alpha_5 \\
\gamma_6 &= \alpha_6 + \alpha_7 \\
\gamma_7 &= \alpha_2 + \alpha_3 + \alpha_4 + \alpha_5 \\
\end{align*}
The volume of $W_{\flat}$ can be computed to be
\[
\prod_{v \in W_{\flat}} Xw(v) = X^{35} (\gamma_1^{-15/2}\gamma_2^{-13}\gamma_3^{-33/2}\gamma_4^{-18}\gamma_5^{-35/2}\gamma_6^{-15}\gamma_7^{-21/2}).
\]
The modular function for $G$ can be computed to be
\[
\delta^{-1}(t) = (\gamma_1^7\gamma_2^{12}\gamma_3^{15}\gamma_4^{16}\gamma_5^{15}\gamma_6^{12}\gamma_7^7)(t).
\]
We can compute the weights $\beta_i$ corresponding to the height-one
coefficients, with the end result being
\[
\prod_{v \in W_{\flat}} Xw(v)\delta^{-1}(t) = X^{70}\frac{Z(\beta)}{Z(b)},
\] 
for $Z(\beta) = \beta_1^2 \beta_2^5 \beta_3^6 \beta_4^8 \beta_5^7 \beta_6^4 \beta_7^3$.
\subsubsection{$E_8$}
Let $S_H = \{\alpha_1,\dots,\alpha_8\}$, where the Dynkin diagram of $H$ is:
\begin{center}
\begin{tikzpicture}[scale=0.6]
  \node[circle, draw,label={$\alpha_1$}] (A1) at (0,0) {};
  \node[circle, draw,label={$\alpha_3$}] (A3) at (2,0) {};
  \node[circle, draw,label={$\alpha_4$}] (A4) at (4,0) {};
  \node[circle, draw,label={$\alpha_5$}] (A5) at (6,0) {};
  \node[circle, draw,label={$\alpha_6$}] (A6) at (8,0) {};
  \node[circle, draw,label={$\alpha_7$}] (A7) at (10,0) {};
  \node[circle, draw,label={$\alpha_8$}] (A8) at (12,0) {};
  \node[circle, draw,label=below:{$\alpha_2$}] (A2) at (4,-2) {};
  
  \draw[-] (A1) -- (A3) -- (A4) -- (A5) -- (A6) -- (A7) -- (A8);
  \draw[-] (A2) -- (A4);
\end{tikzpicture}
\end{center}
The root basis $S_G = \{\gamma_1,\dots,\gamma_8\}$ can be described as
\begin{align*}
\gamma_1 &= \alpha_2 + \alpha_3 + \alpha_4 + \alpha_5 \\
\gamma_2 &= \alpha_6 + \alpha_7 \\
\gamma_3 &= \alpha_4 + \alpha_5 \\
\gamma_4 &= \alpha_1 + \alpha_3 \\
\gamma_5 &= \alpha_2 + \alpha_4 \\
\gamma_6 &= \alpha_5 + \alpha_6 \\
\gamma_7 &= \alpha_7 + \alpha_8 \\
\gamma_8 &= \alpha_3 + \alpha_4\\
\end{align*}
The volume of $W_{\flat}$ can be computed to be
\[
\prod_{\omega \in W_{\flat}} X\omega(t) = X^{64} (\gamma_1^{-18}\gamma_2^{-30}\gamma_3^{-40}\gamma_4^{-48}\gamma_5^{-54}\gamma_6^{-58}\gamma_7^{-30}\gamma_8^{-30}).
\]
The modular function for $G$ can be computed to be
\[
\delta^{-1}(t) = (\gamma_1^{14}\gamma_2^{26}\gamma_3^{36}\gamma_4^{44}\gamma_5^{50}\gamma_6^{54}\gamma_7^{28}\gamma_8^{28})(t).
\]
We get
\[
\prod_{v \in W_{\flat}} Xw(v)\delta^{-1}(t) = X^{128}\frac{Z(\beta)}{Z(b)},
\]
with $Z(\beta) = \beta_1^4 \beta_2^8 \beta_3^{10} \beta_4^{14} \beta_5^{12} \beta_6^8 \beta_7^6 \beta_8^2$.

\section{Proof of the main results}
\label{section:final}

We are finally in a position to prove Theorems \ref{theo:main} and
\ref{theorem:tail}. Before that, we present an auxiliary result bounding
the elements in $V(\Z)$ with big stabiliser.

\subsection{Congruence conditions}
\label{subs:congruence}

We want to bound the number of elements in $V(\Z)$ having a big stabiliser
in the cusp. To do that, we will apply the Selberg sieve, which in turn
requires a power saving estimate in the count of the elements in the cusp
when applying finitely many congruence conditions. 

Let $S \subset V(\Z)$ be a subset which is not necessarily $G(\Z)$-invariant. 
Analogously to Section \ref{subs:averaging}, we define
\[
N^{cusp}(S,X) = \sum_{i=1}^k\frac{1}{r_i\vol(G_0)}\int_{g \in \cF} \# \{v \in S \cap W_0 \cap (g G_0 \kappa(\Lambda L_i))_{<X}\} dg.
\]
This is the analogue of the definition of $N(S,X)$ but substituting $V(\Z)^{red}$
for the cusp $W_0(\Z)$. In the proof of Theorem \ref{theo:reducible} we saw
that
\[
N^{cusp}(W_0(\Z),X) = CX^{\dim V} + O(X^{\dim V - 1})
\]
for some constant $C$. The main theorem of this section is the following:
\begin{theorem}
\label{theo:congruence}
Let $S$ be a translate of $mV(\Z)$, for some integer $m \geq 1$. Then,
for a fixed $m$, we have that
\[
N^{cusp}(S,X) = Cm^{-\dim V}X^{\dim V} + O(m^{1-\dim V}X^{\dim V - 1}),
\]
where the implied constant is independent of $m$ and the choice of translate $S$,
as long as $m = O(X)$.
\end{theorem}
\begin{proof}
The computation is almost exactly the same as in Section \ref{subs:averaging},
with the only major difference being in the application of Davenport's lemma.
In our situation, given a bounded region $\cR$ as in the statement of
Proposition \ref{prop:Davenport}, we have that
\[
\#(\cR \cap \Z^n) = \vol(\cR) + O(\max\{\vol(\ol{\cR}),1\} ).
\]
If we now replace $\Z^n$ by a translate $L$ of $m\Z^n$, we can translate and
shrink the region $\cR$ appropriately so that $L$ gets identified with
$\Z^n$, so that Davenport's lemma yields
\[
\#(\cR \cap L) = m^{-n}\vol(\cR) + O(\max\{\vol(\ol{m^{-1}\cdot \cR}),1\} ).
\]
In our situation, what we get now instead of \eqref{eq:Davenport} is
\[
\#(S \cap (nt\cB_X)|_b) = m^{-\dim V} \vol((nt\cB_X)|_b) + O(m^{1-\dim V}X^{\dim V - 1}),
\]
where the implied constant does not change with respect to $m$ or $S$.
The hypothesis that $m = O(X)$ guarantees that none of the lower-dimensional
terms dominate. Now, the rest of the argument of Section \ref{subs:counting}
goes through in an analogous way to obtain the desired result.
\end{proof}
\begin{obs}
The added hypothesis of $m = O(X)$ is added here for convenience, and does
not affect the use of the Selberg sieve. In the notation of Section \ref{subs:Selberg},
we are really only adding the error terms for $d < D_0$, where $D_0$ is later
chosen to be a suitable power of $X$. We can always impose the additional
restriction that $D_0 = O(X)$, and the argument would go through as usual
just with a possibly worse error term. We do not do the explicit computations
of $D_0$ in this paper, but they always turn out to be $O(X)$.
\end{obs}

\subsection{Elements with big stabiliser}
Let $V^{bs}(\Z)$ be the set of elements $v \in V(\Z)$ with $\#\Stab_{G(\Q)}(v) > 1$.
Then, we are in a position to prove the following:
\begin{prop}
\label{prop:bs}
There exists a constant $\delta_{bs} > 0$ such that
\[
N^{cusp}(V^{bs}(\Z),X) = O(X^{\dim V - \delta_{bs}}).
\]
\end{prop}
\begin{proof}
By \cite[Proof of Lemma 8.22]{LagaThesis}, the density of elements
in $V(\F_p)$ having big stabiliser converges to a constant $c \in (0,1)$
as $p \to \infty$. The proof can be easily modified to show this is also
true when substituting $V(\F_p)$ by $W_0(\F_p)$.
Then, we can apply the Selberg sieve as explained Section \ref{subs:Selberg},
combined with Theorem \ref{theo:congruence}.
\end{proof}
\begin{obs}
We remark that this result depends on Theorem \ref{theo:congruence}. Namely,
to apply the Selberg sieve in that way we need a power saving estimate
on the count of reducible $G(\Z)$-orbits in $B(\Z)$, so we could not have
proven Proposition \ref{prop:bs} at the same time as Proposition \ref{prop:cusp}.
\end{obs}

\subsection{Elements with large $Q$-invariant}

In this section, we conclude the proof of Theorem \ref{theorem:tail}
about bounding elements with discriminant divisible by the square of a
large squarefree number. For $\cW_m^{(1)}$, the strongly divisible case,
it suffices to use the Ekedahl sieve as in \cite[Theorem 3.5, Lemma 3.6]{BEkedahl},
knowing that the discriminant polynomial is irreducible by \cite[Lemma 4.2]{LagaThesis}.
Thus, to conclude the proof of Theorem \ref{theorem:tail}, it suffices
to consider the weakly divisible case.

Recall that by the results in Section \ref{section:construct}, to prove 
Theorem \ref{theorem:tail} it is enough to bound the number of elements in
\[
W_M = \{v \in \frac{1}{N}V(\Z) \mid v = g\kappa_b \text{ for a squarefree } m>M,\, (m,N) = 1, \,g \in G(\Z[1/m])\setminus G(\Z), \, b \in B(\Z),\, \Delta(b) \neq 0\}.
\]
It suffices to prove that:
\begin{theorem}
There exists a constant $\delta > 0$ such that
\[
N(W_M,X) = O\lp\frac{X^{\dim V}}{M}\rp + O(X^{\dim V - \delta}).
\]
\end{theorem}
\begin{proof}
We can apply the same averaging argument as in Section \ref{subs:averaging}, up to the point where
\[
N(W_{M,i},X) = \frac{1}{r_i \vol(G_0)}\int_{n \in \omega} \int_{t \in T_c} \# \{v \in W_M \cap W_0(\Z) \cap (nt G_0 \kappa(\Lambda L_i)) \} \delta(t) dn d^{\times}t + O\lp X^{\dim V - \delta} \rp,
\]
where $W_{M,i} := W_M \cap G(\R) \kappa(\Lambda L_i)$. In light of Proposition
\ref{prop:bs}, it suffices to count elements in $W_{M,i} \cap W_0(\Z)$ with
trivial stabilizer. For each of these elements $v$, Proposition \ref{prop:p-orbit}
guarantees that $Q(v) > M$, and in particular that $|Z(v)| > M^2$. Then,
by following the same proof as in Theorem \ref{theo:reducible}, there is
some constant $C_i$ such that
\[
N(W_{M,i},X) = C_i X^{\dim V} \sum_{\substack{b \in \Z^r \\ |Z(b)| > M^2}} \frac{1}{|Z(b)|} + O(X^{\dim V - \delta}).
\]
But the written sum is $O(\frac{1}{M})$, so that concludes the proof.
\end{proof}

Therefore, we have proven Theorem \ref{theorem:tail}.
With the same proof as \cite[Theorem 4.4]{BSWsquarefree},
combining the estimates for the strongly divisible primes and the weakly
divisible primes, we get:
\begin{theorem}
For a squarefree integer $m$, let $\cW_m$ denote the elements of $B(\Z)$ 
with discriminant divisible by $m^2$. There is a constant 
$\delta > 0$ such that
\[
\sum_{\substack{m > M \\ m \text{ squarefree} \\ (m,N) = 1}} \# \{b \in \cW_m \mid \Ht(b) < X\} = O_{\eps} \lp \frac{X^{\dim V+\eps}}{\sqrt{M}} \rp + O(X^{\dim V - \delta}).
\]
\end{theorem}

\subsection{A squarefree sieve}
\label{subs:Sieve}

Theorem \ref{theo:main} follows from the previous tail estimates by performing
a squarefree sieve, following the methods in \cite[\S 4]{BSWsquarefree}.
In fact, we will prove a slightly more general result about counting elements
in $B(\Z)$ imposing infinitely many congruence conditions.

Let $\kappa$ be a positive integer. We say a subset $\cS \subset B(\Z)$ 
is \emph{$\kappa$-acceptable} if $\cS = B(\Z) \cap \bigcap_p \cS_p$,  where 
$\cS_p \subset B(\Z_p)$ satisfy the following:
\begin{enumerate}
\item $\cS_p$ is defined by congruence conditions modulo $p^{\kappa}$.
\item For all sufficiently large primes $p$, the set $\cS_p$ contains
all $b \in B(\Z_p)$ such that $p^2 \nmid \Delta(b)$.
\end{enumerate}
For any subset $A \subset B(\Z)$, denote by $N(A,X)$ the number of elements
of $A$ having height less than $X$. For any prime $p$ and any subset $A_p \subset B(\Z_p)$,
we denote by $\rho(A_p)$ the density of elements of $A_p$ inside $B(\Z_p)$.
\begin{theorem}
\label{theo:sieve}
Let $\kappa$ be a positive integer, and let $\cS \subset B(\Z)$ be a $\kappa$-acceptable
subset. Then, there exists a constant $\delta > 0$ such that
\[
N(\cS,X) = \left(\prod_{p} \rho(\cS_p) \right)N(B(\Z),X) + O(X^{\dim V - \delta }).
\]
\end{theorem}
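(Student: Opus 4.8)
The plan is to split the sieve into a main term coming from "small" square divisors and an error term coming from "large" square divisors, with the crossover at a parameter $M = M(X)$ to be optimised at the end. First I would reduce to a clean inclusion-exclusion: for a $\kappa$-acceptable $\cS$ and a squarefree integer $q$, write $\cS_q \subset B(\Z/q^{\kappa}\Z)$ for the image of $\prod_{p \mid q}\cS_p$, and note that by the Chinese Remainder Theorem $N(\cS, X)$ can be expanded via the characteristic function of $\cS$ over the primes. Since $\cS_p$ contains all $b$ with $p^2 \nmid \Delta(b)$ for $p$ large, the indicator $\mathbf{1}_{\cS}(b)$ differs from $\prod_{p \text{ small}}\mathbf{1}_{\cS_p}(b)$ only when $\Delta(b)$ is divisible by $p^2$ for some large $p$. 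So I would write
\[
N(\cS, X) = \sum_{q \text{ squarefree}} \mu(q)\, N\bigl(\{b \in B(\Z) : b \bmod q^{\kappa} \in \text{(bad set)}_q,\ \Ht(b) < X\}, X\bigr) + (\text{correction}),
\]
but more usefully: $\mathbf{1}_\cS(b) = \bigl(\prod_{p \le M}\mathbf{1}_{\cS_p}(b)\bigr)\cdot\bigl(1 - \mathbf{1}_{\exists p > M,\ p^2 \mid \Delta(b),\ b \notin \cS_p}\bigr)$ up to terms controlled by the finitely many exceptional small primes where condition (2) may fail. The first factor is a congruence condition modulo $\prod_{p\le M} p^\kappa$, a fixed modulus, so standard equidistribution of $B(\Z)$ in boxes (using $d_1 + \dots + d_r = \dim V$, i.e. the cited Lemma) gives
\[
N\Bigl(B(\Z) \cap \bigcap_{p \le M}\cS_p,\ X\Bigr) = \Bigl(\prod_{p \le M}\rho(\cS_p)\Bigr)N(B(\Z),X) + O\bigl(M^{\kappa r} X^{\dim V - 1}\bigr),
\]
where $r$ is the number of coordinates; the error is the usual boundary/lattice-point estimate for a region of height $< X$ cut by a modulus $\prod_{p\le M}p^\kappa$.

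The second step is to control the tail. The set of $b \in B(\Z)_{<X}$ with $b \notin \cS_p$ for some prime $p > M$ and $p^2 \mid \Delta(b)$ is contained in $\cW_p^{(1)} \cup \cW_p^{(2)}$ for that $p$ (strongly or weakly divisible), so by Theorem \ref{theorem:tail} its cardinality summed over $p > M$ is
\[
O_{\eps}\!\left(\frac{X^{\dim V + \eps}}{M}\right) + O\!\left(X^{\dim V - \gamma}\right)
\]
for a fixed $\gamma > 0$. I would also need to absorb the contribution of the finitely many exceptional primes $p$ for which condition (2) of $\kappa$-acceptability fails: each such $p$ contributes a genuine congruence condition modulo $p^\kappa$, but there are only finitely many, so they can be folded into the fixed modulus in Step 1 without changing the shape of the estimate. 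Next I would compare the finite product $\prod_{p \le M}\rho(\cS_p)$ with the full Euler product $\prod_p \rho(\cS_p)$: the tail $\prod_{p > M}\rho(\cS_p)$ converges because $1 - \rho(\cS_p) = O(p^{-2})$ — indeed for large $p$ the complement of $\cS_p$ is contained in $\{b : p^2 \mid \Delta(b)\}$, whose $p$-adic density is $O(p^{-2})$ since $\Delta$ is irreducible (cited Lemma 4.2 of \cite{LagaThesis}) of positive degree, so it is nonconstant modulo $p$ and $\{p^2 \mid \Delta\}$ has density $\le c/p^2$ by the same Ekedahl-type count used for $\cW_p^{(1)}$. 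Hence $\prod_{p \le M}\rho(\cS_p) = \prod_p \rho(\cS_p) + O(1/M)$, contributing $O(X^{\dim V}/M)$ to the main term.

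Combining the three pieces,
\[
N(\cS, X) = \Bigl(\prod_p \rho(\cS_p)\Bigr) N(B(\Z),X) + O_{\eps}\!\left(\frac{X^{\dim V + \eps}}{M}\right) + O\bigl(M^{\kappa r} X^{\dim V - 1}\bigr) + O\bigl(X^{\dim V - \gamma}\bigr),
\]
and I would choose $M = X^{\beta}$ with $\beta > 0$ small enough that $M^{\kappa r} X^{-1} = X^{\kappa r \beta - 1}$ has a negative exponent, i.e. $\beta < 1/(\kappa r)$; then every error term is $O(X^{\dim V - \delta + \eps})$ for $\delta = \min(\beta, 1 - \kappa r\beta, \gamma) > 0$, which is independent of $\eps$. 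This $\delta$ depends on $\kappa$ through the boundary term, as expected. The main obstacle is not any single step here — each is routine given Theorem \ref{theorem:tail} — but rather making sure the bookkeeping of the exceptional small primes and the uniformity of the implied constants in Theorem \ref{theorem:tail} (which is asserted to be independent of $X$ and $M$) genuinely lets the truncation parameter $M$ grow polynomially with $X$; this is exactly the point where the two-term shape of Theorem \ref{theorem:tail}, with the $1/M$ saving, is essential, since a bound of the form $O(X^{\dim V + \eps}/M)$ alone (without a power saving) would force $M$ to be a fixed power of $X$ and would still suffice, but one must check the second error term $O(X^{\dim V - \gamma})$ does not interact badly with the sieve — it does not, since it is already a power saving uniform in $M$.
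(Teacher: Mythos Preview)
Your outline matches the paper's: truncate at a parameter $M$, count the finitely many congruence conditions for $p\le M$ by lattice-point estimates, bound the contribution of $p>M$ via Theorem~\ref{theorem:tail}, and compare the truncated Euler product to the full one. The paper commits immediately to the specific choice $M=X^{2/\kappa}$ rather than optimising at the end, and obtains $\delta=\min(\gamma,2/\kappa)$.

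There is, however, a genuine gap in your small-prime step. You correctly identify the relevant modulus as $Q=\prod_{p\le M}p^{\kappa}$, but then record the boundary error as $O(M^{\kappa r}X^{\dim V-1})$. By the prime number theorem $Q=e^{(1+o(1))\kappa M}$, which is exponential in $M$; the lattice-point error in a box cut by a congruence modulo $Q$ carries a factor of $Q$, not of any polynomial in $M$. With your choice $M=X^{\beta}$ for any $\beta>0$ this term swamps the main term, so the optimisation $\beta<1/(\kappa r)$ does not close the argument.

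The paper sidesteps this by exploiting an arithmetic feature of the representation that you do not use: from Table~\ref{table:curves} every invariant degree satisfies $d_i\ge 2$, so each coordinate $p_{d_i}(b)$ ranges over an interval of length at least $X^{2}$. Choosing $M=X^{2/\kappa}$ then guarantees that each \emph{individual} prime modulus $p^{\kappa}$ is at most $X^{2}$, hence no larger than the shortest side of the box. This is what makes the Chinese Remainder step go through with a power-saving error (the paper records the error as $O_{\eps}(X^{\dim V-r+\eps})$, where $r$ is the number of invariants). To repair your argument you should replace the naive modulus $\prod_{p\le M}p^{\kappa}$ by a M\"obius inversion over squarefree $d\le X^{2/\kappa}$, use $d_i\ge 2$ to control the boundary term for each such $d$, and use the $O(1/p^{2})$ density of $B(\Z_p)\setminus\cS_p$ (which you already invoke for the Euler product tail) to make the sum over $d$ converge.
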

\begin{proof}
Recall that $B = \Spec \Z[p_{d_1},\dots,p_{d_k}]$. For an element $b \in B(\Z)$ 
of height at most $X$, it holds that $|p_{d_i}(b)| < X^{d_i}$,
where by Table \ref{table:curves} we see that $d_i \geq 2$ for all $i$.
For a positive squarefree integer $m$ coprime to $N$, denote by $\cS_m'$ 
the big family defined for each prime $p$ as:
\begin{itemize}
\item If $p \mid N$, we set $\cS_p' = \cS_p$.
\item If $p \mid m$, we set $\cS_p' = B(\Z_p) \setminus \cS_p$.
\item Otherwise, we set $\cS_p' = B(\Z_p)$.
\end{itemize}
By the inclusion-exclusion principle, we get that
\[
N(\cS,X) = \sum_{\substack{m \geq 1 \\ (m,N) = 1}} \mu(m) N(\cS_m',X),
\]
where $\mu(m)$ is the Möbius function. We can estimate $N(\cS_m,X)$ as follows:
in $B(\Z)$, the set $\cS_m'$ is the union of $T_m$ translates of a congruence
class modulo $(mN)^{\kappa}$, and we have that $T_m = \prod_{p \mid m} (1 - \rho(\cS_p)) \prod_{p \mid N} \rho(\cS_p) \cdot (mN)^{k \kappa}$.
Each of these congruence classes contributes $\prod_{i=1}^k\lp \frac{2X^{d_i}}{(mN)^{\kappa}} + O(1) \rp$
to the sum $N(\cS_m,X)$. In summary, we get
\[
N(\cS_m,X) = \prod_{p \mid m} (1-\rho(\cS_p)) \prod_{p \mid N} \rho(\cS_p) N(B(\Z),X) + O(m^{\kappa}X^{\dim V - 2}).
\]
By Theorem \ref{theorem:tail}, we also have that for large enough $M$:
\[
\sum_{m \geq M} \mu(m) N(\cS_m',X) = O_{\eps} \lp \frac{X^{\dim V+\eps}}{\sqrt{M}} \rp + O(X^{\dim V - \delta})
\]
Combining the previous identities, we get
\begin{align*}
N(\cS,X) &=  \prod_{p \mid N} \rho(\cS_p) \sum_{m = 1}^M \mu(m) \prod_{p \mid m} (1-\rho(\cS_p)) N(B(\Z),X) + O_{\eps} \lp M^{\kappa+1}X^{\dim V - 2} + \frac{X^{\dim V + \eps}}{\sqrt{M}} + X^{\dim V - \delta}\rp \\
&= \prod_{p} \rho(\cS_p) N(B(\Z),X) + O_{\eps} \lp \frac{X^{\dim V}}{M}+ M^{\kappa+1}X^{\dim V - 2} + \frac{X^{\dim V + \eps}}{\sqrt{M}} + X^{\dim V - \delta}\rp,
\end{align*}
where the last estimate follows from the observation that $\rho(\cS_p) \gg 1 - \frac{1}{p^2}$
by \cite[Proof of Theorem 3.2]{PoonenSQF}. Now, optimizing we choose
$M = X^{4/(2\kappa+3)}$, which is enough for the result.
\end{proof}

\printbibliography
\end{document}